\newtheorem{theorem}{Theorem}[section]
\newtheorem{corollary}[theorem]{Corollary}
\newtheorem{proposition}[theorem]{Proposition}
\newtheorem{assump}[theorem]{Assumption}
\newtheorem{problem}[theorem]{Problem}
\def\th@mydefinition{%
  \thm@notefont{\bfseries}}
\renewcommand{\geq}{\geqslant}
\renewcommand{\leq}{\leqslant}
\theoremstyle{mydefinition}
\newtheorem{example}[theorem]{Example}
\theoremstyle{definition}
\newtheorem{definition}[theorem]{Definition}
\newtheorem{remark}[theorem]{Remark}
\newtheorem{lemma}[theorem]{Lemma}
\newcommand{\mnmatrices}{\cM_{\nrows,\ncols}(\rmax)}
\newcommand{\llq}{\text{``}}
\newcommand{\rrq}{\text{''}}
\newcommand{\adj}{^{\mathrm{adj}}}
\DeclareMathAlphabet{\mathbbold}{U}{bbold}{m}{n}
\newcommand{\zero}{\mathbbold{0}}
\newcommand{\unit}{\mathbbold{1}}
\newcommand{\nrows}{m}
\newcommand{\ncols}{n}
\newcommand{\cw}{\operatorname{cw}}
\newcommand{\set}[2]{\{#1;\,#2\}}
\newcommand{\res}{^\sharp}
\newcommand{\balance}{\,\nabla\, }
\newcommand{\notbalance}{\,\not\!\nabla\, }
\newcommand{\rmax}{\mathbb{R}_{\max}}
\newcommand{\Ti}{{\mathbb T}_{\mathrm e}}
\newcommand{\R}{\mathbb{R}}
\newcommand{\Rm}{\R\cup\{-\infty\}}
\newcommand{\N}{\mathbb{N}}
\newcommand{\proj}{\pi}
\newcommand{\inj}[1]{#1^\vee}
\newcommand{\NP}{\text{\sc NP}}
\newcommand{\coNP}{\text{\sc co-NP}}
\newcommand{\NEW}[1]{{\em #1\/}\index{#1}}
\newcommand{\cM}{\mathcal{M}}
\newcommand{\detp}[1]{\per #1}
\newcommand{\per}[1]{\operatorname{per}#1}
\def\allperm{\mathfrak{S}}
\title[Tropical polyhedra and mean payoff games]{Tropical polyhedra are equivalent to mean payoff games}
\author{Marianne Akian}
\address{Marianne Akian,
INRIA Saclay--\^Ile-de-France and CMAP, \'Ecole 
Polytechnique. Address:
CMAP, \'Ecole Polytechnique,
Route de Saclay,
91128 Palaiseau Cedex, France.}
\email{Marianne.Akian@inria.fr}
\author{St\'ephane Gaubert}
\address{St\'ephane Gaubert,
INRIA Saclay--\^Ile-de-France and CMAP, \'Ecole 
Polytechnique. Address: CMAP, \'Ecole Polytechnique,
Route de Saclay,
91128 Palaiseau Cedex, France. Phone +33 1 69 33 46 13, Fax +33 1 69 33 46 46}
\email{Stephane.Gaubert@inria.fr}
\author{Alexander Guterman}
\address{Alexander Guterman, Moscow State University, Leninskie Gory, 119991, GSP-1,
Moscow, Russia}
\email{guterman@list.ru}
\thanks{The two first authors were partially supported by the joint RFBR-CNRS grant 05-01-02807, by a MSRI Research membership for the Fall 2009 Semester on Tropical Geometry, and by a grant from LEA (Laboratoire Europ{\'e}en Associ{\'e}) MathMode. The second author was also partially supported by the Arpege  programme of the French National Agency of Research (ANR), project ``ASOPT'', number ANR-08-SEGI-005 and by the Digiteo project DIM08 ``PASO'' number 3389.
The third author was partially supported by the invited professors program from INRIA Paris-Rocquencourt and by the grants MD-2535.2009.1 and RFBR 09-01-00303.}
\subjclass[2010]{Primary 14T05; Secondary 91A50}
\date{December 4, 2009; Revised: April 25, 2011; June 9, 2011.}
\begin{document}
\maketitle

\begin{abstract}
We show that several decision problems originating from
max-plus or tropical convexity are equivalent to
zero-sum two player game problems.
In particular, we set up an equivalence between
the external representation of tropical convex sets
and zero-sum stochastic games, in which tropical polyhedra
correspond to deterministic games with finite action spaces. 
Then, we show that the 
winning initial positions can be determined from
the associated tropical polyhedron.
We obtain as a
corollary a game theoretical proof
of the fact that the tropical rank of a matrix, defined as
the maximal size of a submatrix for which the optimal assignment
problem has a unique solution, coincides with the maximal
number of rows (or columns) of the matrix which are linearly independent
in the tropical sense. Our proofs rely on techniques from non-linear Perron-Frobenius theory.
\end{abstract}
\section{Introduction}
\subsection{Statement of the problems and main results}
The three following problems are basic in max-plus or tropical algebra.

\begin{problem}[Is a tropical polyhedral cone non-trivial?]\label{pb-1}
Given $m\times n$ matrices $A=(A_{ij})$ and $B=(B_{ij})$ with entries
in $\Rm$, does there exist a vector $x\in (\Rm)^n$ non-identically $-\infty$
such that the inequality $\llq Ax\leq Bx\rrq$ holds in the tropical sense, i.e.,
\begin{align}
\max_{j\in [n]} \bigl( A_{ij}+ x_j \bigr) \leq \max_{j\in[n]} \bigl(  B_{ij}+ x_j \bigr) , \qquad
\forall i\in [m] \enspace ?
\label{e-fgcone}
\end{align}
\end{problem}
Here and in the sequel, we use the notation $[n]:=\{1,\ldots,n\}$. 
\begin{problem}[Is a tropical polyhedron empty?]\label{pb-2}
Given $m\times n$ matrices $A=(A_{ij})$ and $B=(B_{ij})$ with entries
in $\Rm$, and two vectors $c,d$ of dimension $m$ with entries in $\Rm$,
does there exist a vector $x\in (\Rm)^n$ 
such that the inequality $\llq Ax+c\leq Bx+d\rrq$ holds in the tropical sense, i.e.,
\begin{align}\label{e-affinepol}
\max\big(\max_{j\in [n]} (A_{ij}+ x_j),c_i\big) \leq \max\big(\max_{j\in[n]} (B_{ij}+ x_j),d_i\big), \qquad
\forall i\in [m] \enspace ?
\end{align}
\end{problem}

\begin{problem}[Is a family of vectors tropically dependent?]\label{pb-indep}
Given $m\geq n$ and an $m \times n$ matrix $A=(A_{ij})$ with entries in $\Rm$, 
are the columns of $A$ tropically linearly dependent? I.e., can
we find scalars $x_1,\ldots,x_n\in \R\cup\{-\infty\}$, not all
  equal to $-\infty$, such that the equation $\llq Ax=0\rrq$ holds in the tropical sense, meaning that for every value of $i\in[m]$,
when evaluating the expression
\[
\max_{j\in[n]} (A_{ij}+x_j) 
\]
the maximum is attained by at least two values of $j$?
\end{problem}

The representation of a tropical polyhedral cone by inequalities
turns out to be equivalent to the description of a mean payoff
game by a bipartite directed graph in which the weights
indicate the payments (the weighted graph is coded by the matrices
$A$ and $B$). More generally, we consider an infinite system
of inequalities, the set $[m]$ being replaced by an infinite set in~\eqref{e-fgcone}. The set $P$ of solutions of this system is now a tropical convex cone
(not necessarily polyhedral), and we associate to it a mean payoff
game with an infinite set of actions, corresponding to defining
half-spaces. We shall see that such infinite systems of inequalities
represent in particular {\em stochastic} mean payoff (zero-sum) games.

Our main results set up a correspondence between the external
representation (by inequalities) of a tropical convex cone
$P$, and mean payoff games with infinite action spaces on one side, in which
\[
\exists u\in P, \; u \not\equiv -\infty \Leftrightarrow \text{ there is at least one winning initial state.}
\]
Moreover, when $P$ is polyhedral, the actions spaces becomes finite,
and
\[
\exists u\in P, \; u_i \neq -\infty \Leftrightarrow \quad i \text{ is a winning initial state} %
,
\]
see Theorems~\ref{theo-reduce} and~\ref{theo-reduce2}.
This shows that Problem~\ref{pb-1} and its affine version, Problem~\ref{pb-2},
are (polynomial-time) equivalent to mean payoff game problems. We show by the same techniques that Problem~\ref{pb-indep} reduces to a mean payoff
game problem, and derive theoretical results concerning
tropical linear dependence by game techniques.

Before discussing further these results, we give more background. 
Note that
some of the present results were announced in the conference paper~\cite{AkianGaubertGuterman10}, without proofs.
\subsection{Motivation}
The first two problems concern max-plus or tropical {\em convex sets}. The latter
are subsets $C$ of $(\R\cup\{-\infty\})^n$ such that
\[
u,v\in C,\qquad \lambda,\mu\in \R\cup\{-\infty\},\qquad
\max(\lambda,\mu)=0 \implies (\lambda +u)\vee (\mu+v) \in C
\]
where ``$\vee$'' is the supremum operator for the partial order of 
$\R\cup\{-\infty\}$, that is the ``max'' applied entrywise, and where $\lambda+u$
denotes the vector obtained by adding the scalar $\lambda$ to every entry
of $u$. 

Max-plus or tropical convexity has been developed by several researchers
under different names. It goes back at least to the work of
Zimmermann~\cite{zimmerman77}. It was studied by Litvinov, Maslov, and 
Shpiz~\cite{litvinov00}, in relation to problems of calculus of variations,
and by Cohen, Gaubert, and Quadrat~\cite{cgq00,cgq02}, motivated by discrete event system problems~\cite{ccggq99}. Max-plus polyhedra have appeared in tropical geometry after the
work of Develin and Sturmfels~\cite{DS04}, followed by several works including
the ones of Joswig and Yu, see~\cite{joswig04,JSY07}. Recent works
on the subject include~\cite{BriecHorvath04,blockyu06,GK,BSS,NiticaSinger07a,joswig-2008,katz08,GM08,AlGK09}.

As it is shown in~\cite{SGKatz} (see also~\cite{maxplus97,katz08})
max-plus polyhedra can be defined equivalently
in terms of generators (extreme points and rays) or relations
(linear or affine inequalities).
In particular, a max-plus polyhedral cone can be defined by
systems of the form $\llq Ax\leq  Bx\rrq$, whereas max-plus polyhedra
can be defined by their affine analogues, $\llq Ax+c\leq Bx+d\rrq$.
Max-plus polyhedra have been used in particular in~\cite{katz05,LGKL09} to solve
controllability and observability problems for discrete event systems,
and they have been used in~\cite{XAGG08} as a new domain in static analysis
by abstract interpretation, allowing one to express disjunctive constraints.
The question of solving $\llq Ax\leq Bx\rrq$ over (finite) relative integers
has also been considered in~\cite{bezem2,bezem3} with motivations from SMT (SAT-modulo theory) solving.

In many applications, it is necessary to pass from the description
by inequalities $\llq Ax\leq Bx\rrq$ to the description by extreme
generators. Although a reasonably efficient hypergraph based algorithm has been
developed~\cite{AGG10double}, its applicability is limited by the exponential
blowup of the number of extreme generators, in the worst case~\cite{AlGK09}.
The alternative approach of tropical geometry~\cite{DS04,joswig-2008}, in
which a tropical polyhedron is represented by a classical polyhedral
complex, is subject to a more severe exponential blowup (the number of cells of the complex exceeds the one of tropical extreme generators). 
However, in several applications, including 
the final control synthesis or observer synthesis step in~\cite{katz05,LGKL09},  one only needs to find a single solution or to decide that 
there is none. This subproblem, which is expected to be much simpler,
is the object of Problems~\ref{pb-1} and~\ref{pb-2}, the latter being the affine analogue
of the former.

The third problem, concerning linear dependence, is motivated by tropical
geometry. In this setting, the {\em tropical hyperplane}~\cite{RGST}
determined by a vector $u\in (\R\cup\{-\infty\})^n$, 
non identically $-\infty$,
is defined as the set of points $x\in (\R\cup\{-\infty\})^n$ such that the maximum
in the expression
\[
\max_{i\in [n]}(u_i +x_i) 
\]
is attained at least twice, which may be written as $\llq u\cdot x=0\rrq$,
tropically. This arises naturally when considering
amoebas, which are the images of algebraic varieties over a valued field by the map which takes the valuation entrywise. We refer the reader
to the survey by Itenberg, Mikhalkin and Shustin~\cite{itenberg} 
for more background. In particular, one
may consider the field $\mathbb{K}:=\mathbb{C}\{\{t\}\}$ of formal Puiseux series over $\mathbb{C}$
in an indeterminate $t$, equipped with 
the non-archimedean valuation $v$ which associates to a series
the opposite of the smallest exponent of the monomials appearing
in the series. Then, tropical hyperplanes can be seen to be amoebas
of hyperplanes over $\mathbb{K}$ (this is a special case of a general result
of Kapranov characterizing the non-archimedean amoebas of hypersurfaces, see~\cite{kapranov}).
Moreover, if the columns of a matrix
with entries in $\mathbb{K}$ are linearly dependent over $\mathbb{K}$,
their images by the map which does the valuation entrywise 
can be seen to be linear
dependent in the tropical sense, i.e., in the sense used in Problem~\ref{pb-indep}, see~\cite{RGST}, and so, deciding the linear dependence turns out
to be a basic issue in tropical linear algebra. 

\subsection{Discussion of the result}
One interest of the transformation to mean payoff games that we describe here is of an algorithmic nature. Mean payoff games have been well
studied since the work of Gurvich, Karzanov, and Khachiyan~\cite{gurvich2},
who developed the first combinatorial algorithm to solve them.
Since that time, the existence of a polynomial time algorithm
has been an open question. Recall in this respect
that decision problems concerning the value of mean payoff games
are known to be in $\NP\cap \coNP$, see Condon~\cite{condon2} and
Zwick and Paterson~\cite{zwick} for more information.

Pseudo-polynomial algorithms of value iteration type are known~\cite{zwick},
and other types algorithms have been developed~\cite{gg,dhingra,bjorklund,JPZSIAM}. These include policy iteration algorithms, which are experimentally fast
on typical inputs, although Friedmann showed recently~\cite{Friedmann-AnExponentialLowerB} that a commonly used class of policy improvement
rules leads to a worst case exponential execution
time.
The present transformations allow one to apply
any mean payoff game algorithm to solve Problems~\ref{pb-1}--\ref{pb-indep}.

The problem of finding one solution of the system $\llq Ax\leq Bx\rrq$ was previously considered without the connection with games
by Butkovi\v{c} and Zimmerman, who developed a combinatorial algorithm~\cite{MR2203194}. The latter turns out to be only pseudo-polynomial, as shown by Bezem, Nieuwenhuis and Rodr\'\i guez-Carbonell~\cite{bezem}.
It is conceivable to extend the method of~\cite{MR2203194} to solve the affine case (Problem~\ref{pb-2}) in pseudo-polynomial time. We are not aware of methods preexisting to the present work allowing one to solve Problem~\ref{pb-indep} 
in pseudo-polynomial time.

If one requires the vector $x$ to be finite, Problem~\ref{pb-1} becomes simpler.
In this special case, a reduction which inspired the present one was made
by Dhingra and Gaubert, who showed in~\cite[\S~IV,C]{dhingra} (Corollary~\ref{theo-reduce3} below) that 
$\llq Ax\leq Bx\rrq$ has a finite solution if and only if all
the initial states of an associated mean payoff game are winning.
A related result was established previously by 
Mohring, Skutella and Stork~\cite{skutella}, who 
studied a scheduling problem with and/or precedence constraints,
leading to a feasibility problem which is equivalent to finding
a finite vector in a tropical polyhedron. They showed that the
latter problem is polynomial time equivalent to deciding
whether a mean payoff game has a winning state. 
In~\cite{skutella} as well as in~\cite{dhingra} and the present
work, a mean payoff game is canonically associated (by a
syntaxic construction) to the feasibility problem. 
Then, the approach of~\cite{skutella} requires an additional
transformation, adding some auxiliary states, with special
weights (determined by a value iteration argument).

The present work relies on a different approach, based on non-linear
Perron-Frobenius theory. This allows us to deal at the same
time with infinite coordinates (i.e., tropically zero coordinates,
this is an essential
matter, both in applications and for theoretical
reasons) and with infinite systems
of inequalities, i.e., with tropical convex sets instead
of tropical polyhedra, the former representing
games with infinite action spaces on one side, including
stochastic games. 
Even in the case of finite coordinates and finite systems,
this approach leads to simpler results,
since there is no need to transform
the game as in~\cite{skutella}. 
In particular, our first result shows that the system $\llq Ax\leq Bx\rrq$ has a tropically nonzero solution (possibly with infinite entries) if and only if the associated game
has at least one winning initial state. 
In the case of Problem~\ref{pb-1}, a key ingredient of the proof
is a non-linear extension due to Nussbaum~\cite{MR818894} of the Collatz-Wielandt characterization of the spectral radius of a 
matrix with nonnegative entries. 
Our approach %
to Problem~\ref{pb-2} relies
on Kohlberg's theorem and is therefore in the more special setting
of polyhedra.

Thus, the results of the present paper allow one to apply game theory
algorithms to solve problems of tropical algebra, but conversely, they
also allow one to transfer results from tropical algebra to game theory.
In particular, the set of elements of a max-plus (or min-plus) polyhedron coincides with the set of ``bias'' or ``potential'' vectors which
are used classically to certify that 
the value of a mean payoff game is nonnegative (or nonpositive).
Precise structural results on tropical polyhedra are available 
(including a description by extreme points and rays, see~\cite{agg10,AlGK09}
and the references therein), and so, our results
yield an explicit representation of the set of potentials. 

A related reduction, albeit of a different nature,
was recently pointed out by Schewe~\cite{schewe}, 
who showed that 
solving a mean payoff game reduces to a feasibility problem
in linear programming but with exponentially large coefficients.
The latter reduction appears to be related to the ``dequantization'' method in tropical geometry
(see in particular the proof of Theorem~1 in~\cite{AlGK09}).

Finally, we note that after the submission of this paper,
the present results and ideas have been applied in two further works: 
in~\cite{agg10},
it is shown that the tropical analogue of 
Farkas lemma, i.e., checking whether a tropical
linear inequality can be logically deduced from
a finite family of tropical linear inequalities,
is also equivalent to a mean payoff game problem,
whereas in~\cite{gks10}, a reduction of tropical linear
programming to parametric mean payoff games is presented.
\subsection{A theorem concerning the tropical rank}\label{sec1.4}
It is natural
to look for characterizations of tropical linear independence
in terms of determinants. The tropical analogue of the determinant
of an $n \times n$ matrix $B$ (with entries in $\R\cup\{-\infty\}$)
is the value of the optimal assignment problem
\begin{align}\label{e-def-per}
\max_{\sigma} \left(\sum_{i\in[n]} B_{i\sigma(i)} \right)
\end{align}
where the maximum is taken over all the permutations $\sigma$ of the set $[n]$.
Following Develin, Santos, and Sturmfels~\cite{DSS}, we say that a matrix $B$ with entries in $\R$ is {\em tropically singular} if the above maximum is attained by at least two permutations.
The same notion was first considered by Butkovi\v{c} in~\cite{butkovip94,But} (tropically nonsingular matrices being qualified there of {\em strong regular} matrices). We shall indeed use the following extension of the above definition to the case of matrices $B$ with entries in  $\R\cup\{-\infty\}$: $B$ is {\em tropically singular} if the above maximum is either attained by at least 
two permutations, or equal to $-\infty$.
As a corollary of our game reduction
of Problem~\ref{pb-indep}, we obtain the following result (see Theorem~\ref{th-main}). 
\begin{theorem}\label{theorem2}
Let $A$ be an $m \times n$ matrix with entries in $\R\cup\{-\infty\}$,
with $m\geq n$. Then, the columns of $A$ are tropically linearly independent
if and only if $A$ has a tropically non-singular $n\times n$ submatrix.
\end{theorem}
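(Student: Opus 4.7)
The plan is to prove both directions of Theorem~\ref{theorem2} separately. The ``if'' direction (a tropically non-singular $n \times n$ submatrix forces tropical independence of the columns of $A$) is a self-contained combinatorial argument. The ``only if'' direction (tropical independence forces such a submatrix to exist) relies on the game reduction of Problem~\ref{pb-indep} announced in the introduction: tropical independence of the columns of $A$ translates to Min winning every initial state of an associated mean payoff game.

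For the ``if'' direction, let $B$ be a tropically non-singular $n \times n$ submatrix of $A$, so $\per B > -\infty$ is uniquely attained by some permutation. After relabeling columns and subtracting diagonal entries row by row, I may assume this permutation is the identity, $B_{ii} = 0$, $B_{ij} \leq 0$, and $\per B = 0$ is attained only by the identity. If the columns of $A$ were dependent via some non-trivial $x \in \Rm^n$, then restricting to the rows of $B$ still gives dependence; normalizing so $\max_j x_j = 0$ and letting $S = \{j : x_j = 0\}$, the ``attained twice'' condition in row $i \in S$ of $Bx$ forces some $k \in S$ with $k \neq i$ and $B_{ik} = 0$. Chasing these arrows yields a directed cycle on $S$ along zero entries of $B$; combining this cycle with the identity outside it produces a second permutation attaining $\per B = 0$, a contradiction.

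For the ``only if'' direction, invoke the game reduction of Problem~\ref{pb-indep}. The resulting mean payoff game has state space $[n]$: at state $j$, Min selects a row $i \in [m]$ with $A_{ij} \neq -\infty$, Max selects $l \neq j$ with $A_{il} \neq -\infty$, the one-step payoff is $A_{il} - A_{ij}$, and play moves to state $l$. Tropical independence of the columns of $A$ is equivalent to Min winning at every initial state, and by positional determinacy Min has a positional winning strategy $\sigma : [n] \to [m]$, meaning every cycle in the induced graph (Min fixed, Max free) has strictly negative weight. The key observation is that $\sigma$ must be injective: if $\sigma(j_1) = \sigma(j_2) = i$ for some $j_1 \neq j_2$, Max can traverse the $2$-cycle $j_1 \to j_2 \to j_1$ with total weight $(A_{ij_2} - A_{ij_1}) + (A_{ij_1} - A_{ij_2}) = 0$, contradicting the strict negativity of cycles.

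Since $\sigma$ is injective, the $n \times n$ submatrix $B$ with rows $\sigma(1), \ldots, \sigma(n)$ has $\per B \geq \sum_p A_{\sigma(p), p} > -\infty$, attained at least by the identity. For any $\pi \neq \mathrm{id}$, decompose $\pi$ into cycles; each non-trivial cycle $p_1 \to \cdots \to p_k \to p_1$ is either inaccessible in the game (some $A_{\sigma(p_q), p_{q+1}} = -\infty$, in which case $\pi$ contributes $-\infty$), or a valid Max cycle whose weight $\sum_q (A_{\sigma(p_q), p_{q+1}} - A_{\sigma(p_q), p_q}) < 0$ by Min's winning property. Since fixed points of $\pi$ contribute equally to the identity and to $\pi$, summing over cycles gives $\sum_p A_{\sigma(p), \pi(p)} < \sum_p A_{\sigma(p), p}$, so $\per B$ is uniquely attained by the identity and $B$ is tropically non-singular. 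The main obstacle will be invoking the game reduction cleanly, particularly aligning ``Min wins at every initial state'' with strict negativity of cycles under any positional winning strategy, since that strictness is precisely what converts the game-theoretic winning condition into the uniqueness of the optimal permutation on the extracted submatrix.
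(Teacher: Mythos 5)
Your overall architecture is sound and, for the ``only if'' direction, essentially the paper's: the paper also extracts a map $\sigma:[n]\to[m]$, proves it injective by the same two-cycle cancellation, and shows the identity is the unique optimal assignment of the extracted submatrix. The difference is in how $\sigma$ is obtained. The paper does \emph{not} invoke positional determinacy; it uses the Collatz--Wielandt property (Lemma~\ref{lemma-allequal}) to get a certificate $w\in\R^n$, $\lambda<0$ with $f(w)\leq\lambda+w$, and defines $\sigma(j)$ as a minimizing row, which yields the stronger arc-wise bound $G_{\sigma(j)k}-G_{\sigma(j)j}\leq\lambda$ for the reweighted matrix $G_{ij}=A_{ij}+w_j$; uniqueness of the optimal permutation then reads off row by row rather than cycle by cycle. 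This is a deliberate choice: in the game you describe, Player Max can have an \emph{empty} action set at a row of $A$ with at most one finite entry (the matrix $D$ of~\eqref{defCD} can have identically $-\infty$ rows), so Assumption~\ref{assump2} fails and neither Kohlberg's theorem nor the standard positional determinacy results (Theorem~\ref{th-dual}) apply off the shelf. Your appeal to ``positional determinacy'' therefore needs a patch (e.g.\ handling Max's dead-end states separately, or replacing it by the Collatz--Wielandt certificate as the paper does). Granting the existence of such a $\sigma$ with all induced cycles strictly negative, your injectivity and cycle-decomposition arguments are correct and equivalent to the paper's.

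For the ``if'' direction you take a genuinely different and more elementary route: the paper quotes the tropical Cramer theorem of~\cite{AGG08} ($(\per F)x\balance F\adj\zero$), whereas you give a direct zero-cycle argument. The argument is fine except for the normalization: subtracting the diagonal entry from each row makes $B_{ii}=0$ but does \emph{not} give $B_{ij}\leq 0$ (e.g.\ $B_{11}=B_{22}=0$, $B_{12}=5$, $B_{21}=-10$ has the identity as unique optimal permutation yet a positive off-diagonal entry). You need both row and column potentials, i.e.\ a dual-optimal solution of the assignment LP (Hungarian normalization), to reduce to the case $B_{ij}\leq 0$ with zero diagonal; this exists and preserves the set of optimal permutations, so the gap is routine to fill, but as stated the reduction is false. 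With that fix, the chase through $S=\{j: x_j=0\}$ producing a zero-weight cycle and hence a second optimal permutation is correct. Finally, note that the equivalence ``columns independent $\iff$ Min wins every initial state,'' which you take as given, is itself the content of Theorem~\ref{th:4equivmaxplus} and rests on the nonlinear Perron--Frobenius machinery; it is legitimate to cite it, but it is not free.
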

This was first stated by Izhakian, and proved in the square case,
in~\cite{Izh}. The proof of the rectangular case given in the
present paper, relying on mean payoff games, was announced 
in~\cite{AGG08}. Meanwhile, Izhakian and Rowen
completed the proof in the rectangular case~\cite{IRowen}, using
a different approach. 
In fact, as shown in~\cite{AGG08}, the ``if'' part of the result can be
deduced from the max-plus Cramer theory~\cite{Plus} (see also~\cite{RGST,AGG08}),
and the square case is related to a result of Gondran and Minoux~\cite{gondran78}.
It should also be noted that the
special case of Theorem~\ref{theorem2} in which the entries of the matrix $A$ are finite
can be derived alternatively from a result of Develin, Santos, and Sturmfels~\cite[Theorem~5.5]{DSS}, showing that the Kapranov rank of a matrix is maximal if and only if its tropical rank is maximal. 
However, the present game approach 
(Theorem~\ref{th:4equivmaxplus} below) yields a pseudo-polynomial algorithm and implies that the corresponding decision problem (checking whether the tropical rank is maximal) is in $\NP\cap \coNP$.

What is surprising is that Theorem~\ref{theorem2} still holds in the rectangular case, since
the analogue of this result in the ``signed'' case, in which tropical hyperplanes are replaced by sets of the form
\[
H=\{x\in (\R\cup\{-\infty\})^n\mid \max_{i\in I} (u_i+x_i) = \max_{i\in J} (u_i+x_i) \}
\]
where $I$ and $J$ are disjoint non-empty subsets of $[n]$, and the definition
of tropical singularity is modified accordingly, turns out to be non valid
in the rectangular case, as shown by the counter example of~\cite{AGG08}. 
This shows that some tropical linear algebra issues
are better behaved when thinking of max-plus numbers as images
by the valuation of complex Puiseux series rather than real ones.

It is instructive to compare the different proofs
of Theorem~\ref{theorem2} in the special case of matrices
with finite entries: the one of~\cite{DSS} relies on
a mixed subdivision technique (the ``Cayley trick'') combined
with Sperner's coloring lemma; the one of Izhakian and Rowen~\cite{IRowen}, 
uses a reduction to the square case by a direct inductive argument;
the ``game'' proof that we present here relies on some fixed
point type results (Kohlberg's theorem or Nussbaum's Collatz-Wielandt
property). There turns out to be a fourth proof which we include
in the final section of this paper, in which the rectangular
case is reduced to the square case by a direct application of the tropical Helly theorem. Whereas the arguments
in~\cite{Plus,Izh} made in the case of square matrices can be interpreted
in terms of network flows, it should be noted that a general result of Sturmfels
and Zelevinsky~\cite{SZ} showing that the
Newton polytope of the product of maximal minors of the general $m\times n$
matrix is not a transportation polytope, unless $|m-n|\leq 1$, indicates
that the direct network flow approach is unlikely to carry over to the rectangular case.

Finally, let us point out that Kim and Rousch~\cite{kim} showed
that computing the tropical rank is an $\NP$-hard problem,
whereas our results suggest that the subproblem of
checking whether a matrix is of maximal tropical rank might be easier,
because it belongs to $\NP\cap \coNP$. In addition, it can
be solved in pseudo-polynomial time. More generally, our result
implies that for a fixed $k$, checking whether an $m \times n$ matrix has
tropical rank at least $n-k$ reduces to solving a polynomial number
(namely ${n \choose k}$) of
mean payoff game problems (Corollary~\ref{cor-trivial}).
Since checking whether the tropical rank is at most $k$ is 
a polynomial time problem (Remark~\ref{rk-trivial}),
this suggests that the difficulty of the problem
of computing the tropical rank may be concentrated in instances in which
the tropical rank $k$ is such that $1\ll k\ll \min(m,n)$.

\typeout{TODO. Perhaps discuss the equivalent form of the results in terms of the tropical plucker coordinates (cf. tropical grassmanian of speyer).}
\section{Preliminary results}
\subsection{Mean payoff games arising from tropical cones}\label{sec-gen}
We first recall some basic definitions 
concerning mean payoff games and associate a mean payoff
game to a tropical cone. The state space of the game
will turn out to be finite precisely when the cone is polyhedral.
The max-plus semiring $\rmax$ is the set of real numbers, 
completed by $-\infty$, equipped with the
addition $(a,b)\mapsto \max(a,b)$ and the multiplication $(a,b)\mapsto a+b$.
The name ``tropical'' will be used in the sequel as a synonym of ``max-plus''.

The reader is referred to~\cite{cgq02,DS04} for more background
on max-plus or tropical convexity, and in particular 
to~\cite{cgqs04,SGKatz} for the results on external representation.

A tropical {\em closed convex cone} can be defined externally
by a system of linear tropical inequalities of the form           
\begin{align}\label{sys-trop}
\max_{j\in [\ncols]}(A_{ij}+x_j)
 \leq \max_{j\in[\ncols]} (B_{ij}+x_j),\qquad i \in I 
\end{align}
Here, $I$ is a possibly infinite set, 
recall that $[\ncols]:=\{1,\ldots,\ncols\}$,
and $A_{ij},B_{ij}$ belong
to $\rmax$. When the previous system consists of finitely many
inequalities, i.e., when $I=[\nrows]$ for some integer $\nrows$, we obtain
a tropical {\em polyhedral cone}. Then, $A$ and $B$ will be thought
of as $\nrows\times \ncols$ matrices with entries in $\rmax$.
In the sequel, we shall denote by $\mnmatrices$ the set
of these matrices.

We look for a non-trivial element of the cone, i.e.,
for a solution $x=(x_j)\in \rmax^\ncols$ of the above system, not
identically $-\infty$.  From the algorithmic point of view, the polyhedral
case is of primary interest. However, some of our results
will turn out to hold as well in the case of infinite systems of inequalities,
and their relation with non-linear Perron-Frobenius theory will
be more apparent in this wider setting.

To study this satisfiability problem, we define the following zero-sum 
game, in which there are two players, ``Max'' and ``Min''
(the maximizer and the minimizer). The state space 
consists of the disjoint union of the set $I$ and the set
$[\ncols]$. The two players
alternate their moves. When the current position
is $i\in I$, Player Max must choose the next state $j\in[\ncols]$
in such a way that $B_{ij}$ is finite, and receives $B_{ij}$ from
Player Min. If Player Max does not have
any available action, i.e., if $B_{ij}=-\infty$ holds for all $j\in [\ncols]$,
then Player Max pays an infinite amount to player Min and the game
terminates.
Similarly, when the current state is $j\in[\ncols]$, 
Player Min must choose the next state $i\in I$ in such a way
that $A_{ij}$ is finite, and receives $A_{ij}$ from Player Max.
If $A_{ij}=-\infty$ holds for all $i\in I$, then,
player Min pays an infinite amount to player Max
and the game terminates. 

When $I=[\nrows]$, the game may be represented by a bipartite
(di)graph, with two classes of nodes, $[\nrows]$ and $[\ncols]$.
The players move alternatively a token on the graph,
following the arcs of the graph, which represent the possible moves.
The weight of an arc represent the associated payment, see Example~\ref{ex-first} below.

We shall often need the following assumptions, which require every player to have at least one available action in every state.
\begin{assump}\label{assump1}
For all $j\in [\ncols]$, there exists $i\in I$ such that
$A_{ij}\neq -\infty$.
\end{assump}
\begin{assump}\label{assump2}
For all $i\in I$, there exists $j\in [\ncols]$ such that
$B_{ij}\neq -\infty$.
\end{assump}
Systems of the form~\eqref{sys-trop} can always be transformed
to enforce these assumptions. First, the trivial inequalities
$x_j\geq x_j$, for $j\in [\ncols]$, can always be added
to the original system, which makes sure that Assumption~\ref{assump1} holds.
Then, if $B_{ij}=-\infty$ holds for some $i\in I$ and for all $j\in [\ncols]$, the right hand side of~\eqref{sys-trop} is identically
$-\infty$, and so every variable $x_j$ such that $A_{ij}\neq -\infty$ must be equal to $-\infty$. By eliminating the inequality corresponding to $i$,
we obtain a new system in the remaining variables which is equivalent
to the original one. We also eliminate all the inequalities in which
the left-hand side is identically $-\infty$ (which are trivially satisfied).
By performing this
elimination step a finite number of times, we eventually
arrive at an equivalent
system involving a subset of variables, and satisfying Assumption~\ref{assump2}.

Given an initial state $i$ and a horizon (number of turns) $N$, 
we define $v_i^N$ to be the value of the corresponding finite horizon
game for player Max. The existence of the value is immediate
when the horizon is finite (but the value may be infinite
if the set $I$ is infinite, or if 
Assumption~\ref{assump1} or Assumption~\ref{assump2} does not hold).

When both Assumptions~\ref{assump1} and~\ref{assump2} are fulfilled, we
 shall also consider the ``mean payoff'' game, in which
the payoff of an infinite trajectory is defined as the average
payment per turn received by player Max.  Formally, we define
this average payment as the limsup as the number $N$ of turns goes
to infinity of the payments received plus the opposite of the payments
made by Player Max up to turn $N$ divided by $N$. (When
Assumptions~\ref{assump1} or~\ref{assump2} do not hold, the
payments must be counted up to the termination time if the
latter occurs before time $N$.)
The value of such games was shown to exist by Ehrenfeucht and Mycielski~\cite{ehrenfeucht}, assuming that the state space is finite.
This can also be deduced from a theorem of
Kohlberg (Theorem~\ref{th-kohlberg} below),
which implies in addition that the value vector of this game
coincides with $\chi(f)$.

\begin{example}\label{ex-first}
The matrices
\[
A= \begin{pmatrix} 2 & -\infty\\
                       8  &   -\infty \\
                       -\infty & 0 
\end{pmatrix}
\qquad
B= 
\begin{pmatrix} 1 & -\infty\\
                       -3  &   -12 \\
                       -9 & 5 
\end{pmatrix}
\]
arising from the system
\begin{align}\label{systrop-ex}
\begin{array}{rl}
2+x_1&\leq 1+x_1 \\ 
8+x_1&\leq \max(-3+x_1,-12+x_2)\\
x_2&\leq \max(-9+x_1,5 +x_2)
\end{array}
\end{align}
yield the game
\[
\begin{picture}(0,0)%
\includegraphics{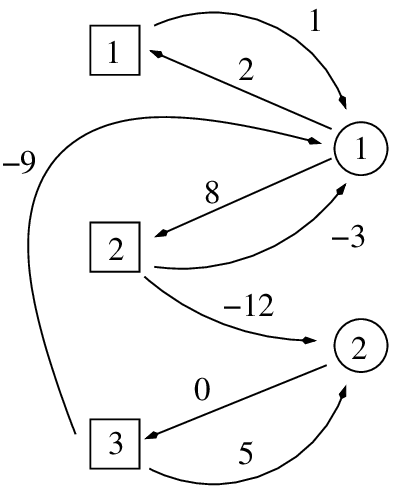}%
\end{picture}%
\setlength{\unitlength}{2072sp}%
\begingroup\makeatletter\ifx\SetFigFont\undefined%
\gdef\SetFigFont#1#2#3#4#5{%
  \reset@font\fontsize{#1}{#2pt}%
  \fontfamily{#3}\fontseries{#4}\fontshape{#5}%
  \selectfont}%
\fi\endgroup%
\begin{picture}(3557,4404)(751,-4945)
\end{picture}%

\]
in which the rows (states in which Max plays)
are denoted by squares and the columns (states in which Min plays)
are denoted by circles. For instance,
when in circle state $1$, Player Min may move
the token  to square state $1$, receiving a payment of $2$,
and then, Player Max has no choice but putting
the token on column node $1$, getting
back a payment of $1$ from Player Min. Hence, the mean
payoff per turn for Player Max will be $-2+1=-1$
if the initial state is the circle node $1$, and
if Player Min chooses this strategy. However, if Player Min 
moves (greedily) the token to square node $2$, receiving
8, Player Max may move the token to circle node $2$, paying
12 to Player Min, but then, Player Min can be forced
to follow the circuit between circle node $2$ and square
node $3$, which ensures a mean payoff of $5$ per
turn unit to Player Max. Using these observations, one can check that the 
value of this mean payoff game is $-1$ if the initial state is circle node $1$,
whereas the value is $5$ if the initial state is circle node $2$. 
We will explain below, in Example~\ref{ex-second}, how this
game can be solved analytically.
\end{example}
\subsection{Properties of order preserving and additively homogeneous maps}
We now recall some basic properties of the dynamic
programming operators arising from the previous games. 

We shall think of the collection of rewards $(B_{ij})_{i\in I,\;j\in[\ncols]}$
as a kernel, to which we associate the max-plus linear operator
$B:(\Rm)^\ncols\to (\Rm)^I$, 
\[
(Bx)_i:=\max_{j\in[\ncols]} (B_{ij}+x_j) ,\qquad \forall i\in I \enspace .
\]
When $I=[\nrows]$, $(B_{ij})_{i\in[\nrows],j\in[\ncols]}$ will be thought of as a matrix in $\mnmatrices$,
and $Bx$ is the product in the tropical sense of the matrix $B$ and the vector $x$.

When Assumption~\ref{assump2} holds, this operator sends $\R^\ncols$
to $\R^I$. We define the operator $A$ in the same way.
The {\em residuated operator} $A\res$ from 
$(\R\cup\{\pm\infty\})^I$ to $(\R\cup\{\pm\infty\})^\ncols$ is defined by
\begin{align}\label{e-def-res}
(A\res y)_j=\inf_{i\in I} (-A_{ij}+y_i) \enspace ,
\end{align}
with the convention $(+\infty)+(-\infty)=+\infty$. This operator
sends $(\R\cup\{-\infty\})^I$ to $(\R\cup\{-\infty\})^n$ whenever Assumption~\ref{assump1} holds, it sends $\R^I$ to $\R^n$ when in addition $I$ is finite.

The term residuated refers to the property
\begin{align}\label{eq-def-res}
Ax\leq y\iff x\leq A\res y \enspace ,
\end{align}
where $\leq$ is the partial order of $(\R\cup\{\pm\infty\})^I$
or $(\R\cup\{\pm\infty\})^n$.
Hence, System~\eqref{sys-trop}, which can be rewritten as $Ax\leq Bx$, is equivalent to $x\leq f(x)$ where $f:(\R\cup\{-\infty\})^n\to (\R\cup\{\pm\infty\})^n$ is defined by
\[ f(x):=A\res B x \enspace,
\]
denoting by concatenation the composition of operators.
The map $f$ sends $(\R\cup\{-\infty\})^n$ to itself  whenever 
Assumption~\ref{assump1} holds. It sends
$\R^n$ to $\R^n$ when in addition Assumption~\ref{assump2} holds
and $I$ is finite.

The map $f$ is
the dynamic programming operator of the previous game, meaning
that the vector $v^N:=(v^N_j)_{j\in[\ncols]}$ of values of the 
game in finite horizon can be computed recursively as follows
\[
v^N=f(v^{N-1}),\qquad v^0 =0 \enspace. 
\]
More generally, setting $v^0:=x$ for some $x\in \R^n$
determines the value function of a variant of the game, in which
after the last step, Player Min pays to Player Max a final amount
$x_j$ depending on the final state $j$.

We shall call {\em min-max functions} the self-maps 
of $(\Rm)^n$ that are of the form $A\res B$, when
$A,B\in \mnmatrices$.
This terminology goes back to Olsder~\cite{olsder91} and
Gunawardena~\cite{jmy}. However, 
unlike in the latter reference, we do not require
a min-max function to send $\R^n$ to $\R^n$.
This generality will be needed in Section~\ref{sec-linindep},
in which the games arising from the tropical linear independence
problem will turn out to have occasionally empty sets
of actions for Player Max.

Any min-max function $f$ from $(\R\cup\{-\infty\})^n$ to itself 
satisfies the following properties:
\begin{subequations}\label{mon-hom}
\begin{align}
f \text{ is \NEW{order-preserving}: }& x\leq y\Rightarrow f(x)\leq f(y)\quad
\forall x,y\in (\R\cup\{-\infty\})^n\enspace ,\label{mon-hom1}\\
f \text{ is \NEW{additively homogeneous}: }&
f(\lambda +x)=\lambda+f(x)\quad \forall \lambda\in \R\cup\{-\infty\},\; 
x\in (\R\cup\{-\infty\})^n\enspace ,\label{mon-hom2}\\
f \text{ is continuous.}&
\end{align}
\end{subequations}
Here, $\Rm$ is equipped with the usual topology,
defined for instance by the distance $(x,y)\mapsto |\exp(x)-\exp(y)|$,
and $(\Rm)^n$ is equipped with the product topology.

When an order-preserving and additively homogeneous map $f$ preserves $\R^n$,
it is easily seen to be sup-norm {\em nonexpansive}, meaning
that
\[
\|f(x)-f(y)\|\leq \|x-y\|,\qquad \forall x,y\in \R^n\enspace ,
\]
where $\|x\|=\max_{i\in[n]} |x_i|$. A min-max function that preserves
$\R^n$ is piecewise affine (we can cover
$\R^n$ by finitely many polyhedra in such a way that the restriction
of the function to each polyhedron is affine). Hence, the following
general result applies in particular to such min-max functions.
\begin{theorem}[Kohlberg~\cite{kohlberg}]\label{th-kohlberg}
A self-map of $\R^n$ that is nonexpansive in any norm
and piecewise affine
admits an invariant half-line, meaning that there exist
two vectors $v,\eta\in \R^n$ such that
\begin{align}\label{e-halfline}
f(v+t\eta)=v+(t+1)\eta 
\end{align}
for all scalars $t$ large enough. 
\end{theorem}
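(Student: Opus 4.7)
My plan is to produce the invariant half-line in two stages: first identify the direction $\eta$ as an asymptotic mean displacement, then construct the base point $v$ via a fixed-point argument exploiting the piecewise affine structure, and finally verify~\eqref{e-halfline} by a recession argument.

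First I would observe that, by nonexpansiveness, the sequence $\|f^{k+1}(0) - f^k(0)\|$ is non-increasing, so the iterates $f^k(0)$ grow at most linearly. Combined with the finiteness of the collection of affine pieces $\{f = A_\alpha(\cdot) + b_\alpha \;\text{on}\; P_\alpha\}_{\alpha \in \mathcal{A}}$, this yields existence of
\[
\eta := \lim_{k\to\infty} \frac{f^k(0)}{k}
\]
via a subadditive/averaging argument applied to the ``type sequence'' of pieces visited along the trajectory. I would then work with the perturbed map $g := f - \eta$, which is still piecewise affine and nonexpansive, but now has $g^k(0)/k \to 0$; the half-line property~\eqref{e-halfline} is equivalent to finding a fixed point of $g$ that, when translated far along $\eta$, sits in a single affine piece of $f$ whose linear part fixes $\eta$.

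The core step is the production of a point $v$ with $g(v) = v$, equivalently $f(v) = v + \eta$. Since pure nonexpansiveness does not guarantee fixed points (witness translations), one must invoke the piecewise affine structure: because only finitely many local affine pieces occur, the trajectory of $g$ must eventually localize in a single piece (or a periodic cycle of pieces) whose composed linear part admits $1$ as an eigenvalue with eigenvector $\eta$ (forced by $g^k(0)/k \to 0$), and one solves the resulting affine fixed-point equation on that piece. Once $v$ is in hand, for $t$ sufficiently large the point $v + t\eta$ lies in the relative interior of a single piece on which $f$ acts as $x \mapsto Ax + b$ with $A\eta = \eta$, so that
\[
f(v + t\eta) = Av + b + t A\eta = (v + \eta) + t\eta = v + (t+1)\eta,
\]
which is~\eqref{e-halfline}. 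The main obstacle is the construction of $v$: nonexpansiveness of $g$ alone is insufficient, and one must combine the finite catalog of local affine pieces with the vanishing mean displacement $g^k(0)/k \to 0$ to localize the long-term dynamics into a single piece (possibly after translating the candidate point far in direction $\eta$ to bypass cycling of the type sequence) and reduce to an affine fixed-point equation that is then solvable by linear algebra.
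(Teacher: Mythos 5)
The paper does not prove this statement: it is quoted verbatim from Kohlberg's 1980 article and used as a black box (only its consequence, Corollary~2.8 on the existence of $\chi(f)$, is derived in the text). So there is no internal proof to compare yours against, and your attempt has to stand on its own. It does not: there are at least three genuine gaps, each located exactly where the real difficulty of Kohlberg's theorem lies. First, the existence of the vector limit $\eta=\lim_k f^k(0)/k$ is not a routine subadditivity fact. Subadditivity of $k\mapsto\|f^k(0)\|$ gives convergence of $\|f^k(0)\|/k$, not of the vector $f^k(0)/k$; for general nonexpansive maps the vector limit can fail to exist, and for piecewise affine nonexpansive maps its existence is essentially a corollary of the invariant half-line theorem rather than a lemma on the way to it. The appeal to an ``averaging argument applied to the type sequence'' is not an argument, because (this is the second gap) the itinerary of pieces visited by an orbit of a piecewise affine map need not be eventually periodic, nor need the orbit localize in a single piece. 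Nonexpansiveness does not tame the combinatorics of which polyhedron the iterates land in, and this is precisely the obstruction Kohlberg's actual proof (an induction exploiting the polyhedral structure) is designed to overcome. Asserting localization is assuming the hard part.

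Third, even if you were handed a point $v$ with $f(v)=v+\eta$, the final verification does not go through as written. The ray $\{v+t\eta: t\geq 0\}$ does eventually stay in a single closed piece $P_\alpha$ (finitely many closed convex pieces cover the ray, so one of them contains a terminal subray), but $v$ itself need not lie in $P_\alpha$, so the single identity $f(v)=v+\eta$ gives you neither $A_\alpha v+b_\alpha=v+\eta$ nor $A_\alpha\eta=\eta$. The statement you must prove is a joint condition on the pair $(v,\eta)$ along an entire half-line, which is strictly stronger than the pointwise equation $f(v)=v+\eta$, and your construction only aims at the latter. (Relatedly, the existence of a solution of $g(v)=v$ when $g^k(0)/k\to 0$ is itself a nontrivial special case of the theorem, not something that follows from ``only finitely many pieces'' plus vanishing drift.) To repair the argument you would need either Kohlberg's original induction, or one of the known alternative routes (e.g.\ via the recession function of $f$ and a Galois-connection/fixed-point argument, or, in the min-max case relevant to this paper, via policy iteration as in the references \cite{gg,gg98} cited alongside Theorem~2.16).
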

In the study of mean payoff games, an important issue is to 
determine the limit
\[
\chi(f):=\lim_{N\to\infty} f^N(0)/N=\lim_{N\to\infty} v^N/N \enspace,
\]
which gives the additive growth rate of the value of the finite horizon game
as a function of the horizon $N$.
Kohlberg's theorem implies that the limit $\chi(f)$ does exist.
\begin{corollary}\label{coro-kohl}
Assume that every player has at least one available action in
every state (Assumptions~\ref{assump1}-\ref{assump2}) and that
the state space is finite.
Then, 
\[
\chi(f)=\eta \enspace ,
\]
where $(v,\eta)$ is an arbitrary invariant
half-line of $f$. 
\end{corollary}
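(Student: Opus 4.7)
The plan is to combine Kohlberg's theorem with the sup-norm nonexpansiveness of $f$. Under Assumptions~\ref{assump1}–\ref{assump2} and finiteness of $I$, the discussion preceding Theorem~\ref{th-kohlberg} shows that $f = A\res B$ is a self-map of $\R^n$ that is order-preserving and additively homogeneous, hence nonexpansive in the sup-norm, and it is piecewise affine. So Theorem~\ref{th-kohlberg} applies and produces vectors $v,\eta\in\R^n$ and a threshold $t_0$ such that $f(v+t\eta)=v+(t+1)\eta$ for every $t\geq t_0$.

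First I would iterate this identity. Fixing any $t\geq t_0$, a straightforward induction on $N$ yields
\begin{equation*}
f^N(v+t\eta)=v+(t+N)\eta \qquad \text{for all } N\geq 0,
\end{equation*}
because after each application of $f$ the argument is still of the form $v+s\eta$ with $s\geq t\geq t_0$, so the half-line identity may be reapplied. Dividing by $N$, we get
\begin{equation*}
\frac{f^N(v+t\eta)}{N} \;=\; \eta + \frac{v+t\eta}{N} \;\longrightarrow\; \eta \qquad \text{as } N\to\infty.
\end{equation*}

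Second, I would transfer this convergence from the point $v+t\eta$ to the point $0$ using nonexpansiveness. Since $f$ is sup-norm nonexpansive, so is each iterate $f^N$, and therefore
\begin{equation*}
\bigl\| f^N(0) - f^N(v+t\eta) \bigr\| \;\leq\; \|v+t\eta\|,
\end{equation*}
a bound that is independent of $N$. Dividing by $N$ gives
\begin{equation*}
\left\| \frac{f^N(0)}{N} - \frac{f^N(v+t\eta)}{N} \right\| \;\leq\; \frac{\|v+t\eta\|}{N} \;\longrightarrow\; 0.
\end{equation*}
Combining the two displays, $f^N(0)/N\to\eta$, which is the desired equality $\chi(f)=\eta$; in particular the limit defining $\chi(f)$ exists.

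There is essentially no hard step here: everything is bookkeeping once Kohlberg's theorem is in hand. The only point demanding a little care is that the half-line identity~\eqref{e-halfline} is guaranteed only for $t$ large enough; this is handled by choosing $t\geq t_0$ once and for all and observing that iteration only increases the parameter, so one never leaves the range on which the identity is valid. Uniqueness of $\eta$ (i.e.\ independence of the chosen half-line) is then automatic, since $\eta=\chi(f)$ is intrinsically defined.
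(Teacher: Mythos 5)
Your proof is correct and follows essentially the same route as the paper: both use sup-norm nonexpansiveness to show that $\lim_N f^N(x)/N$ is independent of the base point $x$, and then evaluate the limit at a point $v+t_0\eta$ on the invariant half-line, where iteration gives $f^N(v+t_0\eta)=v+(t_0+N)\eta$. The only cosmetic difference is the order of the two steps (the paper establishes base-point independence first, you iterate first and then transfer to $0$).
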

\begin{proof}
This result is well known in the operatorial approach of zero-sum games
(see in particular~\cite{neymansurv,rosenbergsorin}),
we include the (simple) proof for the convenience of the reader.
Since $f$ is nonexpansive in the sup-norm,
$\|f^N(x)-f^N(y)\|/N\leq \|x-y\|/N$ tends to $0$ as $N$ tends to infinity.
It follows that the existence and the value of the limit
\[
\lim_{N\to\infty} f^N(x)/N
\]
are independent of the choice of $x\in\R^n$. 
If $v,\eta$ is an invariant half-line,
choosing
$x=v+t_0\eta$, for some large enough $t_0$, 
we deduce that $f^N(x)=x+N\eta$, and so,
$\lim_{N\to\infty} f^N(x)/N=\eta$.
\end{proof}
\begin{remark}
If $f$ is an order-preserving and additively
homogeneous map preserving $\R^n$, then, it was
observed independently by Rubinov and Singer~\cite{singer00} 
and by Gunawardena and Sparrow (see~\cite{mxpnxp0}) that
\begin{align}
f(x)= \inf_{y\in\R^n} \left( f(y)+ \max_{j\in[\ncols]} (x_j-y_j) \right)\quad
\forall x\in \R^n\enspace .
\label{e-primal}
\end{align}
This shows that $f$ can be represented in the form $f=A\res B$
(the set $I$ is equal to $\R^n$). 
This applies in particular
to the dynamic programming operator of a stochastic zero-sum game
\begin{align}
f_j(x)= \inf_{a\in A_j}\sup_{b\in B_j}\big(r_j^{ab}+\sum_{k\in[n]}P_{jk}^{ab}x_k \bigr),\qquad j\in[n]\enspace,
\label{e-stoch}
\end{align}
where $[n]$ is the set of states at which the first player (Min) plays,
$A_j$ is the set of actions of this player,
$B_j$ is the set of actions of the second player (Max), 
$r_j^{ab}$ the payment made by Min to Max if the actions $a$ and $b$
are selected in state $j$, and then, $P_{jk}^{ab}$ is the probability
to move to state $k$. The map $f$ in~\eqref{e-stoch} is order preserving, additively homogeneous, and it preserves $\R^n$ if for instance the payments
$r_{j}^{ab}$ are bounded. Then, it can be represented
as in~\eqref{e-primal}. See~\cite{filar97} for more background
on stochastic games.
\end{remark}

\subsection{The Collatz-Wielandt property}

Some of the main results of this paper rely on a non-linear
version of the Collatz-Wielandt characterization
of the spectral radius which appears in Perron-Frobenius theory.

Given any self-map $f$ of $(\R\cup\{-\infty\})^n$ that is
order-preserving, additively homogeneous, and continuous,
we define the {\em Collatz-Wielandt number} of $f$ to be
\begin{align}
\cw(f) = \inf\{\mu\in \R\mid 
\exists w\in \R^n ,
f(w)\leq \mu +w \}\enspace.\label{eq-def-cw}
\end{align}
A vector $u\not\equiv-\infty$ is a (non-linear) eigenvector 
of $f$ for the eigenvalue $\lambda\in \Rm$ if 
\[
f(u)=\lambda +u \enspace .
\]
The (non-linear) {\em spectral radius} of $f$ is defined
as the supremum of its eigenvalues
\[
\rho(f)= \sup\{\lambda\in \R\cup\{-\infty\}\mid 
\exists u\in (\R\cup\{-\infty\})^n, \;u\not\equiv-\infty,\;
f(u)=\lambda +u \}\enspace ,
\]
and is itself an eigenvalue of $f$.
We shall also need the following ``symmetrical'' version
of the Collatz-Wielandt number
\[
\cw'(f):=
\sup\{\lambda\in \R\cup\{-\infty\}\mid 
\exists u\in (\R\cup\{-\infty\})^n, \;u\not\equiv-\infty,\;
f(u)\geq \lambda +u \}\enspace ,
\]
as well as the following quantity.
\begin{proposition}\label{exist-chibar}
If $f$ is an order-preserving additively homogeneous self-map of $(\Rm)^n$,
then, for all $x\in\R^n$, the following limit exists and is independent of
the choice of $x$: 
\begin{align}\label{defbarchi}
\bar\chi (f) :=\lim_{N\to\infty} \max_{j\in[\ncols]} f^N_j(x)/N\enspace .
\end{align}
\end{proposition}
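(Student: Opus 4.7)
The plan is to reduce the question to a scalar subadditive sequence. Setting $c_N := \max_{j\in[\ncols]} f_j^N(0)\in \Rm$, I would first prove subadditivity $c_{N+M}\leq c_N+c_M$, then apply Fekete's lemma to get convergence of $c_N/N$, and finally transfer the conclusion to arbitrary $x\in\R^n$ through an ``additive Lipschitz'' estimate.

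For subadditivity: set $y:=f^M(0)$, so that each coordinate satisfies $y_j\leq c_M$, i.e., $y\leq c_M+0$ (the scalar $c_M$ added coordinate-wise to the zero vector). Since $f^N$ inherits order-preservation and additive homogeneity from $f$, we have $f^{N+M}(0)=f^N(y)\leq f^N(c_M+0)=c_M+f^N(0)$; taking the coordinate maximum yields $c_{N+M}\leq c_M+c_N$. If $c_M=-\infty$, then $f^M(0)\equiv -\infty$, and the extended additive homogeneity~\eqref{mon-hom2} (valid for $\lambda=-\infty$) forces $f^N(-\infty)\equiv -\infty$, so $c_{N+M}=-\infty$ and the inequality is trivial in $\Rm$. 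Fekete's lemma then gives $c_N/N\to L:=\inf_N c_N/N\in [-\infty,+\infty)$.

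For the independence of $x$: fix $x\in\R^n$ and set $\lambda:=\|x\|_\infty$, so that $-\lambda\leq x\leq \lambda$ interpreted coordinate-wise as constant vectors. Applying the order-preserving additively homogeneous map $f^N$ gives $-\lambda+f^N(0)\leq f^N(x)\leq \lambda+f^N(0)$, and the coordinate maximum yields $|\max_{j\in[\ncols]} f_j^N(x)-c_N|\leq \lambda$ whenever $c_N$ is finite (otherwise $f^N(x)\equiv -\infty$ as well, and both quantities coincide at $-\infty$). Dividing by $N$ and letting $N\to\infty$ shows $\bar\chi(f)=L$, independently of $x$.

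The one subtlety is the handling of $-\infty$ entries in iterates; this is absorbed by the convention $\lambda+(-\infty)=-\infty$ and by the fact that additive homogeneity is assumed also for $\lambda=-\infty$ in~\eqref{mon-hom2}. I do not foresee a real obstacle: the essential observation is simply that the coordinate maximum turns the vector iteration into a scalar subadditive sequence, placing the problem squarely within the scope of Fekete's lemma.
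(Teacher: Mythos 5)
Your proposal is correct and follows essentially the same route as the paper's proof: the sequence $t_N=\max_{j\in[\ncols]}f_j^N(0)$ is shown to be subadditive via order-preservation and additive homogeneity, Fekete's lemma gives convergence of $t_N/N$, and the generalized nonexpansiveness estimate $-\|x-y\|+f^N(x)\leq f^N(y)\leq \|x-y\|+f^N(x)$ transfers the limit to arbitrary $x\in\R^n$. Your explicit handling of the case $t_M=-\infty$ is a minor point the paper leaves implicit, but it changes nothing of substance.
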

\begin{proof}
This is a variant of a result established 
in~\cite{vinc94,gunawardenakeane,sgjg04} when $f$ preserves $\R^n$.
The proof relies on a simple subadditive argument, which
we adapt here for completeness.
Although $f$ may not preserve $\R^n$, it is still nonexpansive on  $\R^n$
in a generalized sense, meaning that for all $x,y\in\R^n$,
$-\|x-y\|+f(x)\leq f(y)\leq \|x-y\|+f(x)$.  
The same is true if we replace $f$ by its $N$th iterate $f^N$, and so,
$-\|x-y\|/N+ \max_{j\in[\ncols]} f^N_j(x) /N \leq 
\max_{j\in[\ncols]} f^N_j(y) /N 
\leq \|x-y\|/N+ \max_{j\in[\ncols]} f^N_j(x) /N$,
which shows that the existence and the value of the limit in~\eqref{defbarchi}
are independent of the choice of $x\in\R^n$. Hence, we take $x=0$
in what follows. Since $f$ is order-preserving and additively homogeneous, 
the sequence $t_N:= \max_{j\in[\ncols]} f^N_j(0)$ is easily
seen to be subadditive, meaning that $t_{N+M}\leq t_N+t_M$.
It follows that the limit 
$\bar\chi(f)=\lim_{N\to\infty} t_N/N$ exists.
\end{proof}
Of course, when $\chi(f)$ exists, we readily deduce from the definitions that
\[
\bar\chi(f)=\max_{j\in[\ncols]} \chi_j(f)  \enspace .
\]

We next derive the following lemma from a theorem 
of Nussbaum~\cite[Theorem~3.1]{MR818894}, dealing with non-linear
maps on finite dimensional cones, which
implies that $\cw(f)=\rho(f)$.
When $f$ preserves
$\R^n$, the fact that $\cw(f)=\bar\chi(f)$, together
with the final part of the statement of the lemma
 was proved in~\cite[Theorem~8]{sgjg04}.
We refer the reader to~\cite{Nuss-Mallet} for infinite
dimensional generalizations.

\begin{lemma}[Collatz-Wielandt property, compare with~\cite{MR818894} and~\cite{sgjg04}]\label{lemma-allequal}
Let $f$ denote a map from $(\R\cup\{-\infty\})^n$ to itself,
that is order-preserving, additively homogeneous, and continuous.
Then,
\begin{align}
\cw'(f)= \rho(f)=\cw(f)= \bar\chi (f) \enspace .
\label{allequal}
\end{align}
Moreover, there is at least one coordinate $j\in[\ncols]$ such that
$\chi_j(f):=\lim_{N\to\infty} f_j^N(x)/N$ exists and is equal
to $\bar\chi(f)$.
\end{lemma}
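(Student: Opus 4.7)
The plan is to establish the chain of inequalities $\rho(f)\leq \cw'(f)\leq \bar\chi(f)\leq \cw(f)$ by elementary arguments, and then to close the loop by appealing to Nussbaum's theorem~\cite{MR818894}, which will supply the remaining inequality $\cw(f)\leq\rho(f)$. The moreover statement will follow by reading off a coordinate from the eigenvector produced in that step.

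The first three inequalities are short. The inequality $\rho(f)\leq\cw'(f)$ is immediate from the definitions, since any eigenvector for eigenvalue $\lambda$ is an admissible witness in the supremum defining $\cw'(f)$. For $\cw'(f)\leq\bar\chi(f)$, I would take $u\not\equiv-\infty$ and $\lambda\in\R$ with $f(u)\geq\lambda+u$, iterate using order-preservation and additive homogeneity to obtain $f^N(u)\geq N\lambda+u$, pick a coordinate $j^*$ with $u_{j^*}$ finite, and then compare with the finite vector $x\in\R^n$ defined by $x_j:=\max(u_j,0)$: since $x\geq u$, order-preservation gives $\max_j f^N_j(x)\geq f^N_{j^*}(u)\geq N\lambda+u_{j^*}$, and dividing by $N$ yields $\bar\chi(f)\geq\lambda$. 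For $\bar\chi(f)\leq\cw(f)$, the analogous iteration applied to $f(w)\leq\mu+w$ with $w\in\R^n$ gives $f^N(w)\leq N\mu+w$, whence $\max_j f^N_j(w)/N\leq\mu+\max_j w_j/N\to\mu$ and $\bar\chi(f)\leq\mu$.

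The main obstacle is the reverse inequality $\cw(f)\leq\rho(f)$. This will follow from Nussbaum's theorem~\cite[Theorem~3.1]{MR818894}, which produces an eigenvector $u\not\equiv -\infty$ of $f$ with eigenvalue equal to $\cw(f)$. The delicate point is to recast our hypotheses (order-preservation, additive homogeneity, continuity on $(\Rm)^n$) into the multiplicative, cone-theoretic form required by Nussbaum, typically via an exponential change of variables that turns $f$ into a continuous, order-preserving, positively homogeneous self-map of a suitable cone in $\R^n_{\geq 0}$, and to handle the possibility that some coordinates of $f$ are identically $-\infty$.

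For the moreover statement, let $u$ be the eigenvector just obtained and pick $j^*$ with $u_{j^*}>-\infty$. Define the truncation $w^{(k)}\in\R^n$ by $w^{(k)}_j:=\max(u_j,-k)$, so that $w^{(k)}\geq u$. Order-preservation combined with the eigenvalue identity yields $f^N_{j^*}(w^{(k)})\geq f^N_{j^*}(u)=N\bar\chi(f)+u_{j^*}$, while Proposition~\ref{exist-chibar} gives $\max_j f^N_j(w^{(k)})/N\to\bar\chi(f)$; combined with the bound $f^N_{j^*}(w^{(k)})\leq\max_j f^N_j(w^{(k)})$, this forces $f^N_{j^*}(w^{(k)})/N\to\bar\chi(f)$. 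The generalized sup-norm nonexpansiveness estimate used in the proof of Proposition~\ref{exist-chibar} then transfers this limit from $w^{(k)}$ to any $x\in\R^n$, yielding $\chi_{j^*}(f)=\bar\chi(f)$.
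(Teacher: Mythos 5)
Your proposal is correct and follows essentially the same route as the paper: the elementary chain $\rho(f)\leq\cw'(f)\leq\bar\chi(f)\leq\cw(f)$, closed by Nussbaum's theorem applied to the conjugated map $F=\exp\circ f\circ\log$ on the cone $\R_+^n$ to obtain $\cw(f)=\rho(f)$. The only (harmless) variation is in the ``moreover'' part, where you extract the coordinate $j^*$ from an eigenvector attaining $\rho(f)$, whereas the paper works with the coordinate-wise quantities $\cw'_j(f)$ and a supremum over all witnesses; both arguments then use the same generalized sup-norm nonexpansiveness to transfer the limit to an arbitrary base point $x\in\R^n$.
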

\begin{proof}%
Nussbaum~\cite[Theorem~3.1]{MR818894} showed that if $F$ is a continuous
self-map of a closed convex cone $C$ in $\R^n$, of nonempty interior,
which preserves
the order of the cone, and is positively homogeneous,
then, the {\em cone spectral radius} of $F$,
which is defined as 
\[
\max\{\lambda\in \R_+\mid 
\exists u\in C\setminus\{0\},\;
F(u)=\lambda u \}\enspace,
\]
where $\R_+$ denotes the set of nonnegative real numbers, coincides with
\begin{align}
\inf\{\mu\in \R_+\mid 
\exists w\in \operatorname{int}(C),
F(w)\leq \mu w \}\enspace.
\end{align}
We associate to a map $f$ the map $F(x)=\exp(f(\log x))$, where
$\log$ denotes the map from $\R_+^n$ to $(\R\cup\{-\infty\})^n$ which
does $\log$ entrywise (with $\log(0)=-\infty$),
 and $\exp$ denotes the inverse of this map. Applying
Nussbaum's theorem to the map $F$, we obtain 
$\rho(f)=\cw(f)$.

If $w\in \R^n$ and $\mu\in\R$ satisfy $f(w)\leq \mu+ w$, then, 
since $f$ is order preserving and additively homogeneous,
we get $f^N(w)\leq N\mu +w$, for all $N\geq 0$. It follows that 
$\bar\chi(f)=\lim_{N\to\infty} \max_{j\in[\ncols]} f^N_j(w)/N\leq \mu$.
Taking the infimum over all $\mu$ and $w$, we deduce that
$\bar{\chi}(f) \leq \cw(f)$. 

If $f(u)\geq \lambda +u$ for some $u\in (\R\cup\{-\infty\})^n$, not
identically $-\infty$, let $w$ be a vector obtained by replacing every
infinite entry of $u$ by an arbitrary finite entry. Since $u\leq w$, we
deduce that $f^N(w)\geq f^N(u)\geq N\lambda+u$. Since 
$\max_{j\in[\ncols]} u_j \neq -\infty$, we get 
$\bar \chi(f)=\lim\limits_{N\to\infty} \max_{j\in[\ncols]} f^N_j(w)/N \geq \lim\limits_{N\to\infty} (N\lambda +\max_{j\in[\ncols]} u_j)/N= \lambda$. 
It follows that $\lambda \leq \bar{\chi}(f)$.
Taking the supremum over all $\lambda$ and $u$, we deduce that
$\cw'(f)\leq \bar{\chi}(f)$.
We also observe that by definition, $\rho(f)\leq \cw'(f)$.
Hence, we obtain $\rho(f)\leq \cw'(f)\leq \bar{\chi}(f)\leq  \cw(f)=
\rho(f)$, from which we deduce the equality~\eqref{allequal}.

Finally, let us denote by $ \cw'_j(f)$ the supremum of the $\lambda$ 
such that there exists $u\in (\R\cup\{-\infty\})^n$ 
such that $u_j$ is finite and $f(u)\geq \lambda +u$.
By definition $\cw(f)=\max_{j\in[\ncols]} \cw_j(f)$, 
hence there exists $j\in[\ncols]$ such that $\cw(f)=\cw_j(f)$.
Let us fix such an index $j$.
For all $\lambda $, $u$ and $w$ as above and such that $u_j$ is finite, 
and for all $x\in\R^n$, we
deduce from  $\|x-w\|+f^N(x)\geq f^N(w)\geq f^N(u)\geq N\lambda+u$ that
$\liminf_{N\to\infty} f^N_j(x)/N \geq \lambda$. 
Taking the supremum over $\lambda$ and $u$, it
follows that $\liminf_{N\to\infty} f^N_j(x)/N\geq \cw'(f)=\bar\chi(f)$.
Since $\liminf_{N\to\infty} f^N_j(x)/N
\leq \limsup_{N\to\infty} f^N_j(x)/N\leq \bar\chi(f)$, we deduce that
the limit $\chi_j(f)$ exists and is equal to $\bar\chi(f)$.
\end{proof}
\begin{remark}\label{rk-iter}
For all integers $k$, it follows from the definition of $\bar\chi(f)$ that
$\bar\chi(f^k)=k\bar\chi(f)$. Hence, the same is true
for $\rho(f)$, $\cw'(f)$, and $\cw(f)$.
\end{remark}
\begin{remark}\label{rk-dual}
When $f$ preserves $\R^n$, all the previous notions can be dualized.
Indeed, it is known that an order preserving, additively homogeneous
self-map $f$ of $\R^n$ admits a unique continuous extension to $(\Rm)^n$,
which we will still denote by $f$, see~\cite{burbanks}.
Similarly, $f$ admits a unique continuous extension to $(\R\cup\{+\infty\})^n$,
which we also denote by $f$.  Then, the following dual quantities
\begin{align*}
&\sup\{\mu\in\R\mid \exists w\in\R^n,\qquad f(w)\geq \mu + w\}\enspace,\\
&\inf\{\lambda\in \R\cup\{+\infty\}\mid 
\exists u\in (\R\cup\{+\infty\})^n, \;u\not\equiv+\infty,\;
f(u)=\lambda +u \}\enspace,\\
&\inf\{\lambda\in \R\cup\{+\infty\}\mid 
\exists u\in (\R\cup\{+\infty\})^n, \;u\not\equiv+\infty,\;
f(u)\leq \lambda +u \}\enspace ,\\
&\underline{\chi}(f):=\lim_{N\to\infty}
\min_{j\in[n]}f_j^N(x)/N,\qquad x\in\R^n,
\end{align*}
are readily seen to coincide (apply Lemma~\ref{lemma-allequal} to the map
$x\mapsto -f(-x)$).
\end{remark}
\begin{remark}
When $f$ preserves $\R^n$, we get $\overline{\chi}(f)=\cw(f)\leq \max_{i\in[n]}(f_i(w)-w_i)<\infty$
for any vector $w\in\R^n$. By duality (Remark~\ref{rk-dual}),
$\underline{\chi}(f)>-\infty$. 
Since $\rho(f) = \overline{\chi}(f)\geq \underline{\chi}(f)$,
this implies in particular that $\rho(f)$ is finite.
\end{remark}

\subsection{From spectral theory to mean payoff games}
We now interpret the previous results in terms of games.
Whereas the ``nonexpansive maps'' approach of zero-sum
games is well known~\cite{neymansurv,rosenbergsorin}, the
significance in terms of games of the Collatz-Wielandt property
that we show in Proposition~\ref{prop-trop2game} does not
seem to have been noted previously (it shows that the value
is always well defined if one player is allowed to select
the initial state, without the usual compactness and regularity
assumptions).

We call {\em positional strategy} of Player Max
a map $\sigma:I\to [\ncols]$ such that $B_{i\sigma(i)}$
is finite for all $i\in I$ (so $\sigma$ is a rule,
telling to Player Max to move to state $\sigma(i)$
when the current state is $i$). Similarly, we
call {\em positional strategy} of Player Min
a map $\pi: [\ncols]\to I $ such that $A_{\pi(j)j}$
is finite for all $j\in [\ncols]$. We denote
by $\Sigma_{\max}$ and $\Sigma_{\min}$ the set of such
strategies, respectively. We associate to the positional
strategies $\sigma$ and $\pi$ the ``one-player'' dynamic programming maps
$g^\sigma$ and $h^\pi$,
\[
g^\sigma_j(x):=\inf_{i\in I} (-A_{ij}+ B_{i\sigma(i)}+x_{\sigma(i)})
\]
\[
h^\pi_j(x)=-A_{\pi(j)j}+\max_{k\in[\ncols]}(B_{\pi(j)k}+x_k) \enspace .
\]
Hence, $(g^\sigma)^N_j(0)$ represents
the minimal amount that Player Min will have to pay to Player
Max in $N$ steps, if the initial state is $j$, provided
that Player Max applies the positional strategy $\sigma$. 
A dual interpretation holds for $(h^\pi)^N_j(0)$.

By construction of $g^\sigma$ and $h^\pi$, we have
\begin{align}
f(x)=\sup_{\sigma\in \Sigma_{\max}}g^\sigma(x) =\inf_{\pi\in\Sigma_{\min}} h^\pi(x) \enspace,\qquad\forall x\in\R^n
\enspace,\label{e-selectweak}
\end{align}
moreover, there is at least one $\sigma$, depending on $x$, attaining
the supremum (take $\sigma(i)=k$ where $k$ attains the maximum in
$\max_{k}(B_{ij}+x_k)$. Similarly, the infimum is attained by at least
one $\pi$, also depending on $x$, when the set $I$ is finite.

The following proposition, which is a consequence
of the Collatz-Wielandt property, shows that $\bar\chi(f)$ can be interpreted
as the {\em value} of a variant of the mean payoff game in which 
the choice of the initial state belongs to Player Max.
The lack of symmetry between both players is 
due to the fact that the set of states in which Max plays, i.e., the set $I$,
can be infinite.
\begin{proposition}\label{prop-trop2game}
Make Assumptions~\ref{assump1}, \ref{assump2}. Then,
Player Max can choose an initial node $j\in[\ncols]$,
together with a positional strategy,
so that he wins a mean payoff of at least $\bar\chi(f)$,
whatever strategy Player Min chooses. Moreover, for all $\lambda>\bar\chi(f)$,
Player Min can choose a positional
strategy so that she looses a mean payoff
no greater than $\lambda$
for all initial nodes $j\in[\ncols]$, whatever
strategy Player Max chooses.
\end{proposition}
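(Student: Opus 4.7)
The plan is to derive both halves of the proposition from the Collatz-Wielandt identities of Lemma~\ref{lemma-allequal}, combined with the decomposition $f=\sup_\sigma g^\sigma = \inf_\pi h^\pi$ recorded in~\eqref{e-selectweak}. The basic idea is that an eigenvector of $f$ yields a witnessing positional strategy for Max, while a sub-eigenvector of $f$ yields one for Min.

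For the Max half, I would use that $\bar\chi(f)=\rho(f)$ is attained to fix $u\in(\Rm)^n$ with $u\not\equiv-\infty$ and $f(u)=\bar\chi(f)+u$. For each $i\in I$, I then pick $\sigma(i)\in[\ncols]$ to be an index with $B_{i\sigma(i)}$ finite (possible by Assumption~\ref{assump2}) that attains $\max_{k\in[\ncols]}(B_{ik}+u_k)=(Bu)_i$; when $(Bu)_i=-\infty$, any $k$ attains this maximum trivially, and the finiteness of $B_{i\sigma(i)}$ forces $u_{\sigma(i)}=-\infty$, so that $B_{i\sigma(i)}+u_{\sigma(i)}=(Bu)_i$ still holds. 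By construction,
\[
g^\sigma_j(u)=\inf_{i\in I}\bigl(-A_{ij}+B_{i\sigma(i)}+u_{\sigma(i)}\bigr)=\inf_{i\in I}\bigl(-A_{ij}+(Bu)_i\bigr)=f_j(u),
\]
so that $g^\sigma(u)=\bar\chi(f)+u$ and, by iteration, $(g^\sigma)^N(u)=N\bar\chi(f)+u$ for every $N$. Fixing $j^\star\in[\ncols]$ with $u_{j^\star}>-\infty$ and setting $c:=\max_k u_k\in\R$, the bound $u_j\leq c$ for all $j$, combined with order-preservation and additive homogeneity of $g^\sigma$, yields $(g^\sigma)^N(u)\leq c+(g^\sigma)^N(0)$, whence $(g^\sigma)^N_{j^\star}(0)/N\geq\bar\chi(f)+(u_{j^\star}-c)/N\to\bar\chi(f)$. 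Hence $\sigma$ from the initial state $j^\star$ guarantees Max a mean payoff of at least $\bar\chi(f)$.

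For the Min half, fix $\lambda>\bar\chi(f)=\cw(f)$, choose $\mu\in(\bar\chi(f),\lambda)$, and, by the definition of $\cw(f)$, a vector $w\in\R^n$ with $f(w)\leq\mu+w$. For each $j\in[\ncols]$, Assumption~\ref{assump1} makes the infimum $f_j(w)=\inf_{i\in I}(-A_{ij}+(Bw)_i)$ range over a non-empty set, so one may select $\pi(j)\in I$ with $A_{\pi(j)j}$ finite such that $-A_{\pi(j)j}+(Bw)_{\pi(j)}\leq f_j(w)+(\lambda-\mu)$. Then $h^\pi(w)\leq f(w)+(\lambda-\mu)\leq\lambda+w$; iterating and running the dual bound with $\min_k w_k$ in place of $c$ gives $(h^\pi)^N_j(0)/N\leq\lambda+(w_j-\min_k w_k)/N\to\lambda$ for every $j\in[\ncols]$, so $\pi$ caps Min's loss at mean payoff $\lambda$ from every initial node.

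The main obstacle is the asymmetry caused by the possibly infinite index set $I$: the supremum in $f=\sup_\sigma g^\sigma$ is always attained coordinatewise (a maximum over the finite set $[\ncols]$), which is what lets Max secure the exact value $\bar\chi(f)$; but the infimum in $f=\inf_\pi h^\pi$ may fail to be attained when $I$ is infinite, and it is precisely this gap that forces the strict slack $\lambda>\bar\chi(f)$ in the second half and necessitates building $\pi$ as an approximate, rather than exact, minimizer.
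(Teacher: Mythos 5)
Your proposal is correct and follows essentially the same route as the paper: an eigenvector of $f$ (for the eigenvalue $\rho(f)=\bar\chi(f)$) yields an exact maximizing selection $\sigma$ with $g^\sigma(u)=f(u)$ for the Max half, and a Collatz--Wielandt sub-eigenvector $w\in\R^n$ with $f(w)\leq\mu+w$ yields an $(\lambda-\mu)$-approximate minimizing selection $\pi$ with $h^\pi(w)\leq\lambda+w$ for the Min half, followed by the same iteration-and-translation bounds. Your explicit treatment of the coordinates where $(Bu)_i=-\infty$ is a small point of extra care that the paper passes over, but it does not change the argument.
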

\begin{proof}
Let us take arbitrary scalars $\lambda,\mu$ such that $\lambda>\mu>\bar\chi(f)$.
Since $\bar\chi(f)=\cw(f)$, we can find a vector $u\in\R^n$ such that
$f(u)\leq \mu + u $. Since $\lambda>\mu$
and 
\begin{align}
f_j(u)= \inf_{i\in I} \biggl( -A_{ij}+\max_{k\in[\ncols]} (B_{ik}+u_k) \biggr) 
\label{e-ext}\end{align}
we can choose,
for all $j\in [\ncols]$, an index $i=\pi(j)$ such
that $-A_{ij}+\max_{k\in[\ncols]} (B_{ik}+u_k) $ is close enough
to the above infimum,
and then,
\[
h^\pi(u)\leq \lambda + u \enspace .
\]
Since $h^\pi$ is order preserving and additively homogeneous,
an inequality of the form $h^\pi(u)\leq \lambda+u$
implies that $(h^\pi)^N(u)\leq N\lambda+u$. Moreover, since 
$0\leq \|u\|+u$, we deduce that $(h^\pi)^N(0)\leq \|u\|+(h^\pi)^N(u)$,
and so, 
\[
(h^\pi)^N(0)\leq 2\|u\|+ N\lambda  \enspace .
\]
Thus, if Player Min applies the positional strategy $\pi$, and if the
initial state is $j\in[\ncols]$, she is guaranteed to loose
no more than
\[
\limsup_N (h^\pi)^N_j(0)/N \leq \lambda \enspace,
\]
whatever strategy Player Max selects. 

Take now $u$ to be an eigenvector of $f$ for the eigenvalue $\rho(f)$,
so that $f(u)=\rho(f)+u$. Since the maxima arising in the expressions~\eqref{e-ext}
are taken over finite, non-empty, sets, Player Max can choose a positional
strategy $\sigma$ in such a way that
\[
f(u)=g^\sigma(u) \enspace .
\]
Then,
\[ f^N(u) = (g^\sigma)^N(u)=N\rho(f)+ u \enspace .
\]
Moreover, we can write $0\geq \alpha +u$, where $\alpha:=
- \max_{k\in[\ncols]} u_k \in\R$,  and so
$(g^\sigma)^N(0)\geq \alpha +(g^\sigma)^N(u)=\alpha+N\rho(f)+u$.
It follows that
\[
\liminf_N (g^\sigma)^N_j(0) /N \geq \rho(f) \qquad \text{ if}\; u_j\neq\-\infty \enspace .
\]
Hence, the positional strategy $\sigma$ guarantees to Player Max
the win of a mean payoff of at least $\rho(f)$ if the initial state $i$
is such that $u_i\neq-\infty$.
\end{proof}

When the set of actions is finite on both sides,
the previous analysis can be simplified. Actually, we have
the following strong duality
result.
\begin{theorem}[{Coro.\ of~\cite{kohlberg}, or~\cite{gg98}}]
\label{th-dual}
Make Assumptions~\ref{assump1}, \ref{assump2}, and assume that
$I$ is finite (so, both players have finite actions sets). We have
\begin{align}
\chi(f)=\max_{\sigma}\chi(g^\sigma)= \min_{\pi}\chi(h^\pi) \enspace,
\label{e-dualth}
\end{align}
where the supremum and the infimum are taken over the 
positional strategies $\sigma\in\Sigma_{\max}$ and $\pi\in\Sigma_{\min}$ of players Max and Min,
respectively. 
In particular, Player Max can choose a positional strategy, so that he wins a mean payoff of at least $\chi_j(f)$ if the initial node is $j$,
whatever strategy Player Min chooses. Similarly, Player Min can choose a positional strategy, so that she looses a mean payoff no greater than $\chi_j(f)$
if the initial node is $j$, whatever strategy Player Max chooses.
\end{theorem}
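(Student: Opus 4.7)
The plan is to combine Kohlberg's theorem (Theorem~\ref{th-kohlberg}) with a pigeonhole argument made possible by the fact that the sets $\Sigma_{\max}$ and $\Sigma_{\min}$ are both finite when $I$ is (since $|\Sigma_{\max}|\leq \ncols^{|I|}$ and $|\Sigma_{\min}|\leq |I|^\ncols$). Under Assumptions~\ref{assump1} and~\ref{assump2} the maps $f$, $g^\sigma$ and $h^\pi$ all preserve $\R^\ncols$ and are piecewise affine and sup-norm nonexpansive, so Theorem~\ref{th-kohlberg} applies to each of them and, combined with Corollary~\ref{coro-kohl}, guarantees the existence of $\chi(f)$, $\chi(g^\sigma)$ and $\chi(h^\pi)$. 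I would fix an invariant half-line $(v,\eta)$ of $f$, so that $f(v+t\eta) = v+(t+1)\eta$ for all $t\geq t_0$ and $\chi(f) = \eta$.

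The easy half of the theorem, namely $\max_\sigma \chi(g^\sigma) \leq \chi(f) \leq \min_\pi \chi(h^\pi)$, will be immediate from~\eqref{e-selectweak}: that identity gives $g^\sigma \leq f \leq h^\pi$ pointwise, and since all three maps are order preserving, iterating, dividing by $N$ and passing to the limit yields the corresponding inequalities for $\chi$.

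The heart of the proof will be the attainment of equality. For each $t\geq t_0$, the selection observation stated after~\eqref{e-selectweak} produces some $\sigma_t\in\Sigma_{\max}$ with $g^{\sigma_t}(v+t\eta) = f(v+t\eta) = v+(t+1)\eta$. Finiteness of $\Sigma_{\max}$ then lets me extract, by pigeonhole, a single $\sigma^*$ with $\sigma_t = \sigma^*$ on an unbounded set $T\subseteq[t_0,\infty)$. On $T$ the piecewise-affine functions $t\mapsto g^{\sigma^*}(v+t\eta)$ and $t\mapsto v+(t+1)\eta$ agree; any two piecewise-affine functions of one variable that coincide on an unbounded set must agree on an entire tail, so $(v,\eta)$ is also an invariant half-line of $g^{\sigma^*}$, and Corollary~\ref{coro-kohl} gives $\chi(g^{\sigma^*}) = \eta = \chi(f)$. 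The argument on the min side is strictly dual and uses that finiteness of $I$ makes $\Sigma_{\min}$ finite and the infimum in~\eqref{e-selectweak} attained. The last assertion of the theorem then falls out by applying Corollary~\ref{coro-kohl} coordinatewise to $g^{\sigma^*}$ and $h^{\pi^*}$, viewed as the dynamic programming operators of the one-player games obtained when one side commits to an optimal positional strategy.

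The main obstacle is this pigeonhole/piecewise-affine step: turning the pointwise equality $f = \sup_\sigma g^\sigma$ into the asymptotic equality $\chi(f) = \chi(g^{\sigma^*})$ requires a single $\sigma^*$ that is optimal uniformly along a whole tail of the half-line, not just at isolated values of $t$, and this uniformity is precisely what finiteness of $\Sigma_{\max}$ combined with piecewise affineness delivers.
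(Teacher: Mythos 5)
Your proof is correct and follows essentially the same route as the paper: the easy inequalities come from $g^\sigma\leq f\leq h^\pi$ together with the monotonicity of $\chi$, and the hard direction is obtained by showing that an invariant half-line of $f$ is also an invariant half-line of a single $g^{\sigma^*}$ (resp.\ $h^{\pi^*}$) and then invoking Corollary~\ref{coro-kohl}. The only cosmetic difference is the mechanism for extracting the tail-optimal strategy: the paper chooses $\sigma(i)$ directly from the fact that finitely many affine functions of $t$ are totally (lexicographically) ordered near $+\infty$, whereas you pigeonhole over the finite set $\Sigma_{\max}$ and use that piecewise-affine functions with finitely many pieces which agree on an unbounded set agree on a whole tail --- both arguments deliver exactly the same uniform optimality along the half-line.
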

The first (and main) statement of this theorem appeared in~\cite{gg98}, as a consequence
of the termination of a policy iteration algorithm for mean payoff games,
building on the ideas of~\cite{gg}.
A simpler argument which applies more generally to stochastic games with perfect
information and finite state and action spaces appeared in~\cite{gg98a}.
Actually, the result can be quickly derived from
the existence of invariant half lines established by Kohlberg~\cite{kohlberg}.
We include the latter derivation here,
since it shows how optimal strategies are effectively obtained
from invariant half-lines. 
Note that Theorem~\ref{th-dual} is related to, but different from,
a strong duality theorem of Liggett and Lippman~\cite{liggettlippman},
see Remark~\ref{rk-related} below. 
\begin{proof}
The map $f\mapsto \chi(f)$ is order preserving. For all $\sigma$,
we have $g^\sigma\leq f$, and 
so
\[
\chi(g^\sigma) \leq \chi(f) ,\qquad \forall \sigma \enspace  .
\]
Let $v,\eta$ denote an invariant half-line of $f$,
so that $f(v+t\eta)=v+(t+1)\eta$ for $t$ large enough.
We shall use the fact that the set of
scalar affine functions $t\mapsto \varphi(t):=a+tb$, with $a,b\in\R$,
is totally ordered for the pointwise ordering $\leq$ in a neighborhood of $\infty$, which is such that $\varphi_1\leq \varphi_2$ if the inequality
$\phi_1(t)\leq \varphi_2(t)$
holds for all $t$ large enough. Actually, this order is nothing
but the lexicographic order on the coefficients, i.e., 
$\varphi_1(t)=a_1+tb_1\leq \varphi_2(t)=a_2+tb_2$ if $b_2>b_1$ or
$b_2=b_1$ and $a_2\geq a_1$. In particular, the supremum or the
infimum of a finite family of affine functions of $t$ coincides with one of these affine functions of $t$, for $t$ large enough. It follows that, for every $i\in I$, 
we can choose $\sigma(i)$ such that
\[
\max_{k\in[n]}(B_{ik}+v_k+t\eta_k)= B_{i\sigma(i)}+v_{\sigma(i)}+t\eta_{\sigma(i)}
\]
holds for $t$ large enough. 
Then, $v+(t+1)\eta=f(v+t\eta)=g^\sigma(v+t\eta)$
for $t$ large enough, showing that $v,\eta$ is also an invariant half-line
of $g^\sigma$. It follows from Corollary~\ref{coro-kohl}
that $\chi(f)=\chi(g^\sigma)$, showing that $\sigma$
attains the maximum in~\eqref{e-dualth}. The argument for $\pi$ is similar.
\end{proof}
\begin{remark}\label{rk-related}
Theorem~\ref{th-dual} should be compared with the strong
duality theorem of Liggett and Lippman~\cite{liggettlippman},
which concerns the value of the mean payoff game,
in which the payment of an infinite run is defined as
the limsup of the payment per turn as the number of turns
tends to infinity, instead of $\chi(f)$, defined to be
the limit of the value per turn of the finite horizon game.  
However, the existence of invariant half-lines (Kohlberg's theorem) implies
that the limit and value operations commute, i.e., that
$\chi(f)$ coincides with the value of the mean payoff game.
In fact, the strategies $\sigma$ and $\pi$
constructed in the proof of Theorem~\ref{th-dual} are
easily seen to be optimal for the latter game. 
Hence, knowing Kohlberg's theorem, one can deduce 
Theorem~\ref{th-dual}, or rather, its generalization
to the case of stochastic games with perfect information and finite state and action spaces, from the result of Liggett and Lippman~\cite{liggettlippman} and vice versa. 
\end{remark}
\begin{remark}
Theorem~\ref{th-dual} provides a good characterization in the sense
of Edmonds of the limit value per turn, $\chi(f)$.
The strategy $\sigma$
attaining the maximum in~\eqref{e-dualth} provides a concise certificate
allowing one to make sure that $\chi(f)$ is greater than or equal
to a given vector. Indeed, $g^\sigma$ is the dynamic programming
operator of a {\em one player} problem, and so, $\chi(g^\sigma)$
can be computed in polynomial time, by reduction to the
maximal circuit mean problem~\cite{karp78}. Similarly, 
the strategy $\pi$ attaining the minimum in~\eqref{e-dualth} provides a concise certificate allowing one to make sure that $\chi(f)$ is smaller than or equal
to a given vector. This is illustrated in Example~\ref{ex-geom} below.
\end{remark}
\section{The correspondence between tropical convexity and mean payoff games}
\subsection{The reductions}
We now come back to our original system of inequalities~\eqref{sys-trop}, 
written as $Ax\leq Bx$ for brevity.
We associate to this system
the mean payoff game with dynamic programming operator $f=A\res B$.

Proposition~\ref{prop-trop2game} makes it legitimate to say that Player
Max has a {\em winning position} whenever $\bar\chi(f)\geq 0$ (i.e.,
Player Max can choose the initial state in such a way that the mean payoff game has a nonnegative value). More generally, we shall say that the initial
state $i$ is {\em winning} whenever $\chi_i(f)$ does exist and is nonnegative.

Our first result, which we deduce from the Collatz-Wielandt property (Lemma~\ref{lemma-allequal}), does not require the number of inequalities to be finite.

\begin{theorem}\label{theo-reduce}
Under Assumption~\ref{assump1}, the system of linear tropical inequalities
$Ax\leq Bx$ has a solution $x\in \rmax^n$ non-identically $-\infty$ if and only
if Player Max has a winning state in the mean payoff game with
dynamic programming operator $f(x)=A\res B x$.
\end{theorem}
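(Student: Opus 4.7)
The plan is to recognize that Theorem~\ref{theo-reduce} is essentially a translation, via residuation and the Collatz--Wielandt property (Lemma~\ref{lemma-allequal}), between three conditions: solvability of $Ax\leq Bx$, nonnegativity of the spectral radius $\rho(f)$, and the existence of an initial state with nonnegative value.

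First I would observe that by the residuation formula~\eqref{eq-def-res}, a vector $x\in\rmax^n$ satisfies $Ax\leq Bx$ if and only if $x\leq A\res(Bx)=f(x)$. So the original problem is equivalent to finding a non-identically-$-\infty$ subfixed point of $f$.

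For the ``only if'' direction, suppose $x\in\rmax^n$, not identically $-\infty$, satisfies $Ax\leq Bx$. Then $f(x)\geq 0+x$, so by the definition of $\cw'(f)$ we have $\cw'(f)\geq 0$. Lemma~\ref{lemma-allequal} yields $\bar\chi(f)=\cw'(f)\geq 0$, and moreover furnishes a coordinate $j\in[\ncols]$ for which $\chi_j(f)$ exists and equals $\bar\chi(f)$. Hence $\chi_j(f)\geq 0$, i.e., $j$ is a winning initial state for Player Max.

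For the ``if'' direction, suppose Player Max has a winning initial state $j$, so that $\chi_j(f)$ exists and is nonnegative. Then $\bar\chi(f)\geq \chi_j(f)\geq 0$, and Lemma~\ref{lemma-allequal} gives $\rho(f)=\bar\chi(f)\geq 0$. Since $\rho(f)$ is an eigenvalue, there exists $u\in(\Rm)^n$, $u\not\equiv -\infty$, with $f(u)=\rho(f)+u\geq u$. Applying residuation in the reverse direction, the inequality $u\leq f(u)=A\res Bu$ is equivalent to $Au\leq Bu$, so $u$ is the desired non-trivial solution.

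The only real subtlety is ensuring that Assumption~\ref{assump1} makes $f$ well-defined as a self-map of $(\Rm)^n$, so that the Collatz--Wielandt lemma applies. This was established in the discussion preceding the theorem (``$f$ sends $(\Rm)^n$ to itself whenever Assumption~\ref{assump1} holds''), and once this is in hand there is essentially no further obstacle: the theorem is the direct game-theoretic reading of Lemma~\ref{lemma-allequal}.
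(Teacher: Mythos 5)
Your proposal is correct and follows essentially the same route as the paper's own proof: residuation to convert $Ax\leq Bx$ into $x\leq f(x)$, then the Collatz--Wielandt identities $\cw'(f)=\bar\chi(f)=\rho(f)$ of Lemma~\ref{lemma-allequal} in one direction and the existence of an eigenvector for the eigenvalue $\rho(f)$ in the other. Your explicit invocation of the final clause of Lemma~\ref{lemma-allequal} to produce a coordinate $j$ with $\chi_j(f)=\bar\chi(f)$ is a welcome touch of extra care that the paper leaves implicit.
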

\begin{proof}
Due to the residuation property~\eqref{eq-def-res}, 
we have that $Ax\leq Bx$ if and only if $x\leq f(x)$.

Hence, if the system $Ax\leq Bx$ has a solution $x\in \rmax^n$ not identically
$-\infty$, then, $\cw'(f)\geq 0$, and so, by Lemma~\ref{lemma-allequal},
$\bar\chi(f)\geq 0$, 
showing that Player Max has a winning state.
Conversely, if Player Max has a winning state, $\rho(f)=\bar\chi(f)\geq 0$,
and so, there exists a vector $u\in (\Rm)^{\ncols}$, not identically $-\infty$,
such that $f(u)=\rho(f)+u\geq u$. Then, $Au\leq Bu$.
\end{proof}

Actually, by using Kohlberg's invariant half-lines,
instead of the Collatz-Wielandt type property of Lemma~\ref{lemma-allequal},
we arrive at the following more precise result when the number
of inequalities is finite.

\begin{theorem}\label{theo-reduce2}
Let Assumptions~\ref{assump1} and \ref{assump2} be satisfied,
and suppose that the system $Ax\leq Bx$
consists of finitely many inequalities ($I=[\nrows]$).
Consider the polyhedral cone
$P:=\set{x\in \rmax^n}{Ax\leq Bx}$, and 
define the support $S$ of $P$ to be the
union of the supports of the elements of $P$:
\[ S:=\set{j\in [\ncols]}{\exists u\in P, u_j\neq-\infty}\enspace .
\]
Then $S$ is %
a support 
of an element of $P$, that is there
exists $u\in P$ such that $S=\set{j\in [\ncols]}{u_j\neq-\infty}$.
Moreover, $S$ coincides with the set of initial states with a nonnegative value
for the associated mean payoff game, that is:
\begin{equation}\label{S=T}
 S=\set{j\in [\ncols]}{\chi_j(f)\geq 0}\enspace ,\end{equation}
where $f:(\Rm)^n\to(\Rm)^n$ is such that $f(x)=A\res B x$.
\end{theorem}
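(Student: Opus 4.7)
My plan is to construct a single $u\in P$ whose support equals $T:=\{j\in[\ncols] : \chi_j(f)\geq 0\}$. Both assertions of the theorem then follow: the existence of $u$ gives $T\subseteq S$ and realizes $S$ as the support of $u$, while the reverse inclusion $S\subseteq T$ is obtained separately.

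Under Assumptions~\ref{assump1}--\ref{assump2} with $I=[\nrows]$, the min-max function $f=A\res B$ preserves $\R^n$, is piecewise affine, and is sup-norm nonexpansive, so Kohlberg's Theorem~\ref{th-kohlberg} produces $v,\eta\in\R^n$ and $t_0\in\R$ with $f(v+t\eta)=v+(t+1)\eta$ for every $t\geq t_0$. Corollary~\ref{coro-kohl} then gives $\chi(f)=\eta$, so $T=\{j : \eta_j\geq 0\}$. The inclusion $S\subseteq T$ is short: if $x\in P$ and $x_j\neq-\infty$, the residuation identity~\eqref{eq-def-res} yields $x\leq f(x)$, hence $x\leq f^N(x)$ by monotonicity of $f$; picking $y\in\R^n$ with $y\geq x$ (replace the $-\infty$ coordinates of $x$ by large finite numbers), monotonicity gives $f^N_j(y)\geq f^N_j(x)\geq x_j$, so $\chi_j(f)=\lim_N f^N_j(y)/N\geq 0$.

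For the reverse inclusion $T\subseteq S$, fix a large $t\geq t_0$, set $w:=v+t\eta\in\R^n$, and define $u\in\rmax^{\ncols}$ by $u_j:=w_j$ for $j\in T$ and $u_j:=-\infty$ otherwise. The equality $f(w)=w+\eta$ combined with~\eqref{eq-def-res} gives $A(w+\eta)\leq Bw$, i.e.\ $A_{ij}+w_j+\eta_j\leq\max_k(B_{ik}+w_k)$ for all $i,j$. Taking the maximum over $j\in T$ and using $\eta_j\geq 0$ there yields
\[
(Au)_i\;\leq\; \max_{j\in T}(A_{ij}+w_j+\eta_j)\;\leq\;\max_k(B_{ik}+w_k)\;=\;(Bu)_i\vee\max_{k\notin T}(B_{ik}+w_k).
\]
Since $\eta_k<0$ for $k\notin T$, the last term tends to $-\infty$ as $t\to\infty$, whereas $(Au)_i$ is either identically $-\infty$ (when $A_{ij}=-\infty$ for all $j\in T$) or bounded below by a finite constant independent of $t$. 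In either situation, enlarging $t$ forces $(Au)_i\leq(Bu)_i$; finiteness of $[\nrows]$ allows a single value of $t$ to work for all $i$ simultaneously, so $u\in P$ has support exactly $T$.

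The main obstacle is this truncation step: dropping the coordinates $k\notin T$ shrinks $Bu$ as much as $Au$, and the inequality $Au\leq Bu$ is not inherited automatically from $A(w+\eta)\leq Bw$. The crucial mechanism is the asymptotic divergence $w_k\to-\infty$ along directions of negative $\eta_k$, which renders the discarded contributions of $Bw$ negligible for $t$ large and preserves the desired inequalities.
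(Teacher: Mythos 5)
Your proof is correct and follows essentially the same route as the paper's: the easy inclusion $S\subseteq T$ via residuation and monotonicity, and the hard inclusion $T\subseteq S$ via Kohlberg's invariant half-line, truncation of $v+t\eta$ to the coordinates in $T$, and the observation that the discarded contributions $\max_{k\notin T}(B_{ik}+v_k+t\eta_k)$ decay linearly in $t$ (since $\eta_k<0$ there) while $(Au)_i$ is either $-\infty$ or bounded below independently of $t$. The only cosmetic difference is that you obtain the first assertion (that $S$ is itself the support of an element of $P$) as a byproduct of the constructed witness together with $S\subseteq T$, whereas the paper first establishes it separately by taking a tropical supremum of elements of $P$; both are valid.
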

\begin{proof}
The first assertion follows from the max-plus convexity of $P$, which is
itself due to the max-plus linearity of the maps $A$ and
$B$. Indeed, for all $j\in S$, let $u^{(j)}$ be an element of $P$ such that 
$u^{(j)}_j\neq -\infty$. Then $v:=\sup_{j\in S} u^{(j)}$ is an element of $P$
since $Av=\sup_{j\in S} Au^{(j)}\leq \sup_{j\in S} Bu^{(j)}=Bv$, and
$v_j\neq -\infty $ for all $j\in S$. Hence, 
 $S\subset \set{j\in [\ncols]}{v_j\neq-\infty}\subset S$, which shows that
$S$ is the maximal support of an element of $P$.

In order to prove the second assertion, 
let us denote by $T$ the right hand side of~\eqref{S=T}.
Let $u\in P$. Due to the residuation property~\eqref{eq-def-res}, 
we deduce that $u\leq f(u)$. It follows
that $u\leq f^N(u)$, for all $N$.
Hence, as in the proof of Lemma~\ref{lemma-allequal},
considering a vector $w$ obtained by replacing every
infinite entry of $u$ by an arbitrary finite entry, we obtain that 
$\chi_i(f)=\lim\limits_{N\to\infty} f^N_i(w)/N \geq \lim\limits_{N\to\infty} f^N_i(u)/N \geq \lim\limits_{N\to\infty} u_i/N= 0$
as soon as $u_i\neq -\infty$.
It follows that $S\subset T$.

Conversely,  let $v,\eta$ denote an invariant half-line of $f$
(given by Theorem~\ref{th-kohlberg}), so
that $f(v+t\eta)=v+(t+1)\eta$ for $t$ large enough, and $\chi(f)=\eta$
(by Corollary~\ref{coro-kohl}).
Then, $A(v+(t+1)\eta)\leq B(v+t\eta)$, i.e.
\begin{align}\label{e-cont}
\max_{j\in[\ncols]}(A_{ij}+v_j+(t+1)\eta_j ) \leq 
\max_{j\in[\ncols]} ( B_{ij}+v_j+t\eta_j) \enspace ,\qquad i\in[\nrows] 
\enspace .
\end{align}
Let $u\in\rmax^n$ be such that $u_j=v_j$ 
for all  $j\in T$, and $u_j=-\infty$ for all  $j\notin T$.
We shall show that, for $t$ large enough,
$A(u+t\eta)\leq B(u+t\eta)$, or equivalently 
\begin{align}
 \max_{j\in T}(A_{ij}+v_j+t \eta_j )\leq 
\max_{j\in T} (B_{ij}+v_j+t\eta_j) ,\qquad i\in[\nrows]\enspace .
\label{e-nice}
\end{align}

Indeed, since $\eta_j\geq 0$ when $u_j\not=-\infty$, we deduce that
$ A(u+t \eta )\leq A(u+(t+1) \eta )\leq A(v+(t+1) \eta )$
for all $t\geq 0$.
Moreover, by definition of $T$, we get that:
\begin{equation}\label{ineqB}
\max_{j\in T^c} (B_{ij}+v_j+t\eta_j )\leq M+t \mu
\quad \forall t\geq 0 \text{ and } i\in[\nrows], \end{equation}
where $T^c$ denotes the complement of $T$ in $[\ncols]$, $\mu=\max_{j\in T^c}\eta_j<0$ 
and $M$ is a real constant.
Hence, using~\eqref{e-cont}, we obtain, for all $t$ large enough:
\begin{equation}\label{eq-inter}
[A(u+t\eta)]_i\leq \max([B(u+t\eta)]_i, M+t \mu)\enspace.
\end{equation}

Again, using that $\eta_j\geq 0$ when $u_j\not=-\infty$, we deduce that
$Au\leq A(u+t \eta )\leq Au +t\bar\chi(f)$ for all $t\geq 0$.
When $[Au]_i=-\infty$, this implies that
 $[A(u+t \eta )]_i=-\infty$ for all $t\geq 0$,
so that $[A(u+t \eta)]_i\leq [B(u+t\eta)]_i$.
Otherwise, $[Au]_i\neq -\infty$, then 
$M+t\mu< [Au]_i\leq [A(u+t  \eta )]_i$ for $t$ large enough,
and by~\eqref{eq-inter}, 
we obtain that  $[A(u+ t\eta)]_i\leq [B(u+t\eta)]_i $.

This shows that $A(u+t\eta)\leq B(u+t\eta)$ holds for $t$ large enough.
Fixing such a $t$, we get that $u+t\eta\in P$.
Since $v\in\R^n$, the support of $u+t\eta$ is equal to $T$.
We have proved that $T\subset S$, and so, $T=S$.
\end{proof}
\begin{remark}
Theorem~\ref{theo-reduce2} shows in particular that $S$ is the maximal support of an element of $P$.
\end{remark}
The case of a full support in Theorem~\ref{theo-reduce2} leads to the
following result, 
which was already pointed out by Dhingra and Gaubert in~\cite{dhingra}.

\begin{corollary}[{\cite[\S IV,~C]{dhingra}}]\label{theo-reduce3}
Make Assumptions~\ref{assump1} and \ref{assump2},
and suppose that the system $Ax\leq Bx$ consists
of finitely many inequalities.
Then, this system has a solution $x\in \R^n$ if and only if
all the initial states of the associated game have a nonnegative value,
i.e.,
\[
\chi(f)\geq 0 \enspace .
\]
\end{corollary}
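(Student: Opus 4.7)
The plan is to derive this corollary as an immediate consequence of Theorem~\ref{theo-reduce2}, which has already established the key identification between the support of the polyhedral cone $P$ and the set of winning initial states for the associated mean payoff game. So the proof should amount to specializing that result to the case of ``full support''.

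First I would recall that by Theorem~\ref{theo-reduce2}, the set
\[
S = \{j \in [\ncols] : \exists u \in P,\; u_j \neq -\infty\}
\]
is itself the support of some vector $u \in P$, and moreover $S = T$, where $T := \{j \in [\ncols] : \chi_j(f) \geq 0\}$. The forward direction is then immediate: if $x \in \R^n$ satisfies $Ax \leq Bx$, then $x \in P$ and every coordinate of $x$ is finite, so $S = [\ncols]$; hence $T = [\ncols]$, i.e., $\chi_j(f) \geq 0$ for all $j$, which is exactly $\chi(f) \geq 0$.

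Conversely, suppose $\chi(f) \geq 0$. Then $T = [\ncols]$, hence $S = [\ncols]$ by Theorem~\ref{theo-reduce2}. The same theorem asserts that $S$ is actually attained, i.e., there exists $u \in P$ whose support is all of $[\ncols]$. Such a $u$ is a vector in $\R^n$ satisfying $Au \leq Bu$, which gives the desired solution.

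There is essentially no obstacle since the substantive work has been carried out in Theorem~\ref{theo-reduce2}; the only subtlety worth emphasizing is that the corollary requires a solution with \emph{finite} entries, and it is precisely the ``$S$ is attained'' part of Theorem~\ref{theo-reduce2} (rather than just the equality $S = T$) that guarantees a single $u$ whose support is the full index set, rather than only having, for each $j$, some vector finite in coordinate $j$.
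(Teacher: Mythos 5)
Your proof is correct and is exactly the derivation the paper intends: the corollary is stated as the ``full support'' case of Theorem~\ref{theo-reduce2}, and you specialize precisely as the paper does, rightly noting that the existence of a single $u\in P$ with support $S$ (not merely the equality $S=T$) is what yields a finite solution.
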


Rather than a tropical polyhedral cone, we now consider a {\em tropical polyhedron} $P$, which is defined by systems
of affine tropical inequalities of the form           
\begin{align}\label{sys-trop2}
\max(\max_{j\in[\ncols]}(A_{ij}+x_j),c_i) \leq \max(\max_{j\in[\ncols]} (B_{ij}+x_j),d_i),\qquad i\in[\nrows]
\end{align}
where the matrices $A,B$ are as above and $c_i,d_i\in \rmax$.

As in the case of classical convexity, polyhedra
can be represented by polyhedral cones, the latter being the projective
analogues of the former affine objects.  So, 
we construct new matrices $\hat{A}$ and $\hat{B}$ by completing the matrices $A$ and $B$ by an $(\ncols+1)$th column, in such a way that $\hat A_{i,\ncols+1}=c_i$ and $\hat B_{i,\ncols+1}=d_i$, for all $i\in[\nrows]$. 

We now define the map $\hat f(y):=\hat{A}\res \hat{B} y$ for all $y\in (\Rm)^{\ncols+1}$.

\begin{theorem}\label{coro-new}
The tropical polyhedron $P$ defined by~\eqref{sys-trop2} is nonempty
if and only if the value of the mean payoff game with dynamic programming
operator $\hat f$, starting from the initial state $n+1$, is nonnegative,
i.e., $\chi_{n+1}(\hat{f})\geq 0$. \qed
\end{theorem}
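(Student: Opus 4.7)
The plan is to reduce the affine case to the cone case (Theorem~\ref{theo-reduce2}) by the standard homogenization trick: introduce an extra coordinate $x_{n+1}$ whose coefficients carry the constants $c_i$ and $d_i$. The matrices $\hat A$ and $\hat B$ are already defined so that the affine system~\eqref{sys-trop2} coincides with the homogeneous system $\hat A y \leq \hat B y$ for $y = (x_1,\dots,x_n,x_{n+1}) \in \rmax^{n+1}$.

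First, I would establish the following equivalence: $x \in \rmax^n$ solves~\eqref{sys-trop2} if and only if $y := (x_1,\dots,x_n,0) \in \rmax^{n+1}$ solves the homogeneous system $\hat A y \leq \hat B y$. This is a direct verification by plugging in $y_{n+1} = 0$. More generally, by the additive homogeneity of tropical operations, if $y \in \rmax^{n+1}$ solves the homogeneous system with $y_{n+1} = \lambda \in \R$, then $(y_1 - \lambda,\dots,y_n - \lambda)$ solves~\eqref{sys-trop2}. Consequently, $P \neq \emptyset$ if and only if the polyhedral cone $\hat P := \{y \in \rmax^{n+1} \mid \hat A y \leq \hat B y\}$ contains an element $y$ with $y_{n+1} \neq -\infty$, i.e., if and only if $n+1$ belongs to the support $S$ of $\hat P$ in the sense of Theorem~\ref{theo-reduce2}.

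Next I would invoke Theorem~\ref{theo-reduce2} applied to the cone $\hat P$ and its associated game operator $\hat f = \hat A\res \hat B$. That theorem identifies $S$ with the set of initial states having nonnegative value, i.e., $S = \{j \in [n+1] \mid \chi_j(\hat f) \geq 0\}$. Combining this with the previous step yields $P \neq \emptyset \iff n+1 \in S \iff \chi_{n+1}(\hat f) \geq 0$, which is exactly the claim.

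The only technical point requiring care is that Theorem~\ref{theo-reduce2} was stated under Assumptions~\ref{assump1} and~\ref{assump2}. For $\hat A, \hat B$ these may fail (for instance, all $c_i$ could be $-\infty$, or some row of $\hat B$ could be entirely $-\infty$). However, as described in the paragraph following Assumption~\ref{assump2}, one can enforce both assumptions by a syntactic preprocessing (adding the trivial inequalities $y_j \geq y_j$, and iteratively eliminating rows whose right-hand side is identically $-\infty$ along with the variables they force to $-\infty$). This preprocessing produces an equivalent homogeneous system on a subset of variables that still contains $y_{n+1}$ (or else clearly certifies $P = \emptyset$ and $\chi_{n+1}(\hat f) < 0$ simultaneously), so the conclusion is preserved. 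This bookkeeping is the only mildly delicate part of the argument; the main content is the homogenization equivalence together with Theorem~\ref{theo-reduce2}.
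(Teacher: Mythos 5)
Your proposal is correct and follows essentially the same route as the paper: homogenize by the extra coordinate, observe that $P\neq\emptyset$ iff the cone $\{y\mid \hat Ay\leq\hat By\}$ contains an element with $y_{n+1}\neq-\infty$, and then apply Theorem~\ref{theo-reduce2}. Your explicit handling of the case where the preprocessing for Assumptions~\ref{assump1}--\ref{assump2} eliminates the variable $y_{n+1}$ is in fact slightly more careful than the paper's one-line remark on this point.
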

\begin{proof}
For any $x\in \rmax^\ncols$, we define the vector $\hat x$ by completing the vector $x$ by an $(\ncols+1)$th coordinate equal to $0$. Then,
\[ 
x\in P \iff \hat{x}\in C:=\{y\in \rmax^{\ncols+1}\mid \hat{A}y\leq \hat{B}y\} \enspace .
\]
Moreover, if $y\in C$ is such that  $y_{n+1}\neq -\infty$, then the vector $x\in\rmax^n$ such that $x_i=y_i-y_{n+1}$ for $i\in[n]$ belongs to $P$.
Hence, the polyhedron $P$ is nonempty if and only if there exists $y$ in the
polyhedral cone $C$ such that $y_{n+1}\neq -\infty$.

We may
assume, perhaps after some transformations,
that the matrices $\hat{A},\hat{B}$ satisfy Assumptions~\ref{assump1}
and~\ref{assump2}. Then, applying Theorem~\ref{theo-reduce2} to $\hat{A}$, $\hat{B}$ and $C$, we readily obtain the %
assertion of the theorem. 
\end{proof}

Note that this theorem
shows that the emptyness problem for (affine) tropical polyhedra reduces to checking whether
a mean payoff game has a specific winning state.

The next theorem yields the converse reduction.
\begin{theorem}\label{th-converse}
Let $f=A\res B$, with $A,B\in \mnmatrices$, denote the dynamic programming %
operator of a mean payoff game (thus Assumptions~\ref{assump1} and \ref{assump2} are satisfied). Then, for every $r\in [n]$ and $\lambda\in \mathbb{R}$, the inequality 
$\chi_r(f)\geq \lambda $ holds if and only if the following tropical polyhedron
is non-empty:
\[
P_r:=\{y\in \rmax^{J}\mid \lambda + \max(\max_{j\in J} (A_{ij}+y_j), A_{ir})
\leq \max(\max_{j\in J} (B_{ij}+y_j), B_{ir}),\;\forall i\in [m]\} \enspace ,
\]
where $J:=[n]\setminus \{r\}$.
\end{theorem}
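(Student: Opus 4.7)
The plan is to reduce the statement directly to Theorem~\ref{theo-reduce2} by a simple change of matrix. First I would reformulate the condition defining $P_r$: after writing out the definition, $y \in P_r$ exactly says that the vector $x \in \rmax^n$ with $x_r = 0$ and $x_j = y_j$ for $j \in J$ satisfies $\lambda + Ax \leq Bx$. Conversely, by the additive homogeneity of $A$ and $B$, if $x \in \rmax^n$ is any solution of $\lambda + Ax \leq Bx$ with $x_r \neq -\infty$, then translating by $-x_r$ yields another such solution with $x_r = 0$. Hence $P_r$ is non-empty if and only if the polyhedral cone
\[
\tilde{P} := \{ x \in \rmax^n \mid \lambda + Ax \leq Bx \}
\]
contains a vector whose $r$-th coordinate is finite, i.e., if and only if $r$ belongs to the support of $\tilde P$.

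Next, I would rewrite $\tilde P$ in the form required by Theorem~\ref{theo-reduce2}. Let $\tilde A \in \mnmatrices$ be obtained by adding $\lambda$ entrywise to $A$, with the convention $\lambda + (-\infty) = -\infty$. Since $\lambda$ is finite, $\tilde A_{ij} = -\infty$ iff $A_{ij} = -\infty$, so Assumption~\ref{assump1} is inherited by $\tilde A$, and Assumption~\ref{assump2} for $B$ is unchanged. Moreover $\lambda + Ax = \tilde A x$ in the tropical sense, so $\tilde P = \{ x \in \rmax^n \mid \tilde A x \leq Bx\}$.

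The key computation is that the associated min-max operator $\tilde f(x) := \tilde A\res B x$ satisfies $\tilde f = f - \lambda$. Indeed, directly from~\eqref{e-def-res},
\[
(\tilde A\res y)_j = \inf_{i\in[m]} (-\tilde A_{ij} + y_i) = -\lambda + \inf_{i\in[m]} (-A_{ij} + y_i) = -\lambda + (A\res y)_j,
\]
and composing with $B$ yields $\tilde f(x) = f(x) - \lambda$. By induction $\tilde f^N(x) = f^N(x) - N\lambda$, so the limit defining $\chi_j(\tilde f)$ exists precisely when the one defining $\chi_j(f)$ does, and $\chi_j(\tilde f) = \chi_j(f) - \lambda$ for every $j \in [n]$.

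Applying Theorem~\ref{theo-reduce2} to the cone $\tilde P$ then identifies its support with $\{ j \in [n] \mid \chi_j(\tilde f) \geq 0 \} = \{ j \in [n] \mid \chi_j(f) \geq \lambda\}$. Combined with the reformulation of $P_r$ above, this yields $P_r \neq \emptyset$ iff $\chi_r(f) \geq \lambda$. I do not anticipate a serious obstacle here; the only point requiring mild care is verifying that the assumptions of Theorem~\ref{theo-rehierarchyduce2} transfer after the shift (which reduces to noting that $\lambda$ is finite) and checking the normalization argument that lets us pass between solutions with $x_r = 0$ and solutions with $x_r$ merely finite.
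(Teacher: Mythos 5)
Your proof is correct and follows essentially the same route as the paper's: both reduce to the case $\lambda=0$ by adding $\lambda$ to the entries of $A$ (so that the dynamic programming operator shifts by $-\lambda$ and the values by the same amount), apply Theorem~\ref{theo-reduce2} to identify the support of the resulting cone with the set of states of value at least $\lambda$, and pass between the cone and $P_r$ by normalizing the $r$-th coordinate to $0$. The only blemish is a garbled theorem reference near the end, which is clearly intended to be Theorem~\ref{theo-reduce2}.
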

\begin{proof}
Note first that $\chi(f)=\lambda +\chi(g)$, where $g$ is the dynamic map
of the mean payoff game obtained by adding the constant $\lambda$ to every
entry of $A$. Thus, it suffices to consider the case in which $\lambda=0$.
Then, Theorem~\ref{theo-reduce2} shows that $\chi_r(g)\geq 0$ if and only
if the polyhedron 
\[
P:=\{x\in \rmax^n\mid \max_{j\in [n]}( A_{ij}+x_j)\leq \max_{j\in [n]}
(B_{ij}+x_j)\}
\]
admits a solution $x$ such that $x_r\neq-\infty$. Setting $y_j=x_j-x_r$
for $j\in J=[n]\setminus \{r\}$, we get that $y\in P_r$, and vice versa.
\end{proof}
\begin{corollary}\label{coro-main}
Each of the following problems: 
\begin{enumerate}
\item Is an (affine) tropical polyhedron empty?
\item Is a prescribed initial state in a mean payoff game winning?
\end{enumerate}
can be transformed in linear time to the other one.
\end{corollary}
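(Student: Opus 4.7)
The corollary is an immediate consequence of the two preceding theorems, so the entire task is to exhibit, in each direction, a purely syntactic transformation whose cost is linear in the input size.

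For the direction $(1)\Rightarrow(2)$, I would invoke Theorem~\ref{coro-new}: given an affine tropical polyhedron specified by matrices $A,B\in\mnmatrices$ and vectors $c,d\in\rmax^m$, augment $A$ and $B$ with the $(n+1)$-th columns $c$ and $d$, respectively, to produce matrices $\hat A,\hat B \in \cM_{m,n+1}(\rmax)$, and output the query ``is $n+1$ a winning initial state of the mean payoff game with dynamic programming operator $\hat{f}=\hat{A}\res \hat{B}$?''. The equivalence is exactly the content of Theorem~\ref{coro-new}. The construction reads each input entry once and writes $O(m)$ additional entries, so it runs in time linear in $mn$.

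For the direction $(2)\Rightarrow(1)$, I would invoke Theorem~\ref{th-converse} with $\lambda=0$: given the matrices $A,B\in\mnmatrices$ defining the mean payoff game and a prescribed initial state $r\in[n]$, output the affine system in the $n-1$ variables $y_j$, $j\in J:=[n]\setminus\{r\}$, obtained by keeping the columns of $A$ and $B$ indexed by $J$ unchanged while reading off the $r$-th columns of $A$ and $B$ as the vectors of affine constants. By Theorem~\ref{th-converse}, the state $r$ is winning (i.e.\ $\chi_r(f)\geq 0$) if and only if the resulting polyhedron $P_r$ is non-empty. This is again pure bookkeeping on an input of size $O(mn)$ and so runs in linear time.

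One caveat to be watchful about is that Theorems~\ref{coro-new} and~\ref{th-converse} formally assume Assumptions~\ref{assump1} and~\ref{assump2}. These may fail on a raw input, but the elimination procedure described in the paragraph following Assumption~\ref{assump2} reduces any system to an equivalent one satisfying both assumptions in time linear in the input, so it may be prepended to either reduction without disturbing the complexity claim. Beyond this minor sanity check, there is no real technical obstacle: the corollary is essentially a packaging statement, and its proof amounts to pointing at Theorems~\ref{coro-new} and~\ref{th-converse} together with the observation that each reduction amounts to appending, deleting, or relabelling a single column.
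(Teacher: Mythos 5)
Your proposal is correct and follows exactly the route the paper intends: the direction from polyhedron emptiness to games is Theorem~\ref{coro-new}, the converse is Theorem~\ref{th-converse} with $\lambda=0$, and both transformations are mere column bookkeeping, hence linear time. Your caveat about enforcing Assumptions~\ref{assump1} and~\ref{assump2} by the paper's elimination procedure matches the remark made in the proof of Theorem~\ref{coro-new}, so nothing is missing.
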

\begin{remark}
We could use Theorem~\ref{th-converse} together with a dichotomy
argument to compute the value of a mean payoff game using an oracle
solving the emptyness problem for tropical polyhedra. 
\end{remark}
In some circumstances, the matrices $A,B$ may have integer
coefficients, and we may be only interested in the elements
of a tropical polyhedron with integer (or $-\infty$) coordinates.
The following result shows that in this case, the feasibility problems over
the integers and over the reals are equivalent.
\begin{proposition}\label{prop-integer}
Let $A,B$ be two $m\times n$ matrices with entries in $\mathbb{Z}\cup\{-\infty\}$. Then, if the polyhedron 
\[
P:=\{x\in \rmax^n\mid \max_{j\in[n]}(A_{ij}+x_j)
\leq \max_{j\in[n]}(B_{ij}+x_j),\;\forall i\in[m]\}
\]
contains a vector $y$ with entries in $\mathbb{R}\cup\{-\infty\}$,
it also contains a vector $z$ with entries  in $\mathbb{Z}\cup\{-\infty\}$
and precisely the same set of indices (positions) of
finite coordinates.
\end{proposition}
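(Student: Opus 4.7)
The plan is to identify the classical polyhedral cell of $y|_T$ in the subdivision of $\mathbb{R}^T$ cut out by the tropical inequalities defining $P$, and observe that this cell is a system of integer difference constraints, so that if it is non-empty it must contain integer points.

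Since $y_j=-\infty$ for $j\notin T:=\{j:y_j\neq-\infty\}$, every summand indexed by $j\notin T$ contributes $-\infty$ to the tropical maxes and can be dropped. Thus $y|_T\in\mathbb{R}^T$ satisfies the reduced system
\[
\max_{j\in T}(A_{ij}+y_j)\leq \max_{j\in T}(B_{ij}+y_j),\qquad i\in[m].
\]
It will suffice to produce $z|_T\in\mathbb{Z}^T$ satisfying the same reduced system and then to extend it by $-\infty$ off $T$, so that the support of the resulting $z$ is exactly $T$.

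For each row $i$, choose $\alpha_i,\beta_i\in T$ attaining the maxima of the left- and right-hand sides at $y$. If $\max_{j\in T}(B_{ij}+y_j)=-\infty$, then the hypothesis forces $\max_{j\in T}(A_{ij}+y_j)=-\infty$ as well, and the row is vacuous and discarded; if only the left side is $-\infty$, the row is again automatically satisfied and is discarded. For the remaining rows, define the cell $C\subseteq\mathbb{R}^T$ by
\[
z_{\alpha_i}-z_k\geq A_{ik}-A_{i\alpha_i}\quad (k\in T,\ A_{ik}\neq-\infty),
\]
\[
z_{\beta_i}-z_k\geq B_{ik}-B_{i\beta_i}\quad (k\in T,\ B_{ik}\neq-\infty),
\]
\[
z_{\alpha_i}-z_{\beta_i}\leq B_{i\beta_i}-A_{i\alpha_i}.
\]
Each inequality bounds the difference of two variables by an integer, and $y|_T\in C$ by construction, so $C$ is a non-empty system of difference constraints with integer right-hand sides.

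The main (classical) step is that such a system admits an integer solution whenever it is feasible over $\mathbb{R}$: encoding every constraint $z_a-z_b\leq c$ as a weighted arc on a digraph over $T$ (augmented with a dummy source with zero-weight arcs), feasibility is equivalent to the absence of a negative cycle, in which case Bellman--Ford with integer weights produces integer shortest-path distances forming a feasible integer vector. Applying this to $C$ yields $z|_T\in\mathbb{Z}^T$. Setting $z_j=-\infty$ for $j\notin T$ we obtain the desired $z$: it lies in $P$ because it lies in the cell $C$, where the prescribed $\alpha_i,\beta_i$ continue to attain the relevant maxima and the key inequality is one of the defining constraints; and its support is exactly $T$ because each coordinate of $z|_T$ is a finite integer. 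I do not foresee any real obstacle beyond this bookkeeping; the only mildly delicate point is ensuring the argmax choices $\alpha_i,\beta_i$ can always be made inside $T$, which is immediate because outside $T$ the coordinates of $y$ are $-\infty$.
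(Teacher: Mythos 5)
Your proof is correct, but it follows a genuinely different route from the paper's. The paper stays inside its non-linear Perron--Frobenius framework: after reducing to the support and arranging (via the assumption-enforcing transformations) that $f=A\res B$ preserves $\R^n$, it sets $g(x)=\min(f(x),x)$ and iterates $z^k=g^k(0)$; nonexpansiveness bounds the orbit, monotonicity of $g$ makes it nonincreasing, and integrality of the data makes $g$ preserve $\mathbb{Z}^n$, so the orbit stabilizes at an integer fixed point of $g$, which satisfies $z\leq f(z)$ and hence lies in $P$. You instead freeze the argmax pattern of $y$, which carves out of (the reduced) $P$ a classical polyhedron defined by difference constraints $z_a-z_b\leq c$ with integer $c$, and invoke the standard integrality of feasible difference-constraint systems (no negative cycle, Bellman--Ford, i.e.\ total unimodularity of the network matrix). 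Your bookkeeping is sound: the rows with $\max_{j\in T}(B_{ij}+y_j)=-\infty$ force $A_{ij}=-\infty$ for all $j\in T$ and are vacuous for \emph{every} $z$ supported on $T$, not just for $y$; the first family of constraints makes $\alpha_i$ an argmax of the left-hand side at $z$, and the third constraint then gives $A_{i\alpha_i}+z_{\alpha_i}\leq B_{i\beta_i}+z_{\beta_i}\leq\max_{k\in T}(B_{ik}+z_k)$, so every point of the cell lies in the reduced $P$ (the second family is not even needed). What each approach buys: the paper's argument requires no new machinery beyond the nonexpansive-map toolbox already in use and is essentially the power-type algorithm described later in the paper; yours is a self-contained piece of combinatorial linear programming that additionally locates the integer point in the same cell of the natural polyhedral decomposition as $y$ (hence with the same argmax certificate), and the integrality step is strongly polynomial once such a cell is exhibited.
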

\begin{proof}
Without loss of generality, we assume that all the coordinates of $y$
are finite (otherwise, it suffices to consider the pre-image of the
polyhedron $P$ by the injective map $x\in \rmax^J\mapsto \hat{x}\in \rmax^n$ 
where $J$ is set of 
indices of
finite entries of $y$ and $[\hat{x}]$ is obtained 
by completing $x$ by $-\infty$ entries: $[\hat{x}]_j=x_j$ for
$j\in J$ and $[\hat{x}]_j=-\infty$ for $j\not\in J$).
Then, we must
show that $P$ contains a vector $z\in \mathbb{Z}^n$. The condition
that $y\in P$ may be written as $y\leq f(y)$ where $f:=A\res B$.
Moreover, since $y\in\R^n$ is such that $y\leq f(y)$, 
it suffices to add or eliminate inequalities in the definition of 
$P$ in order to obtain matrices $A$ and $B$ satisfying 
Assumptions~\ref{assump1} and~\ref{assump2}. Hence, the 
map $f$ can be assumed to preserve $\R^n$. Then, it is nonexpansive
in the sup-norm, and so does the map $g(x):=\min(f(x),x)$. Since $y\leq f(y)$,
we have $g(y)=y$. Consider now the orbit $z^k=g^k(0)$, recalling that $g^k$
denotes the $k$th iterate of $g$. Since $g$ is nonexpansive, we have
$\|z^k-y\|= \|g^k(0)-g^k(y)\|\leq \|y\|$, which shows that $z^k$
is bounded as $k$ tends to infinity. By definition of $g$, we have
$z^0\geq z^1\geq \cdots$. Hence the sequence $z^k$ is converging to some 
element of $\R^n$. Since the coefficients $A_{ij}$ and $B_{ij}$
are integers (or $-\infty$), the map $g$ preserves $\mathbb{Z}^n$, and so,
the sequence $z^k$ must be ultimately stationary, meaning that $z^{k+1}=g(z^k)=z^k$ for some $k$. It follows that $z^k\leq f(z^k)$, and so, $P$
contains the vector $z^k$ which has (finite) integer coordinates.
\end{proof}
Since every affine polyhedron in dimension $n$ can be represented
by a polyhedral cone in dimension $n+1$ by the trick which we
used in the proof of Theorem~\ref{coro-new}, the following is obtained
as an immediate corollary.
\begin{corollary}\label{cor-affine}
Let $A,B$ be two $m\times n$ matrices with entries in $\mathbb{Z}\cup\{-\infty\}$, and let $c,d$ be two vectors of dimension $n$ with entries
in $\mathbb{Z}\cup\{-\infty\}$. Then, if the polyhedron 
\[
P:=\{x\in \rmax^n\mid \max(\max_{j\in[n]}(A_{ij}+x_j),c_i)
\leq \max(\max_{j\in[n]}(B_{ij}+x_j),d_i),\;\forall i\in[m]\}
\]
contains a vector $y$ with entries in $\mathbb{R}\cup\{-\infty\}$,
it also contains a vector $z$ with entries  in $\mathbb{Z}\cup\{-\infty\}$
and precisely the same set of 
indices of
finite coordinates.\qed
\end{corollary}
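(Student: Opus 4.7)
The plan is to deduce Corollary~\ref{cor-affine} from Proposition~\ref{prop-integer} by the same homogenization trick already used in the proof of Theorem~\ref{coro-new}. Given the affine system defining $P$, I would introduce matrices $\hat A,\hat B\in \cM_{m,n+1}(\rmax)$ by appending the columns $c$ and $d$ as the $(n+1)$-th columns of $A$ and $B$ respectively, and consider the tropical polyhedral cone
\[
C:=\{\,\hat x\in \rmax^{n+1}\mid \hat A\hat x\leq \hat B\hat x\,\}\enspace .
\]
The correspondence recalled in the proof of Theorem~\ref{coro-new} shows that for $x\in\rmax^n$, the extension $\hat x=(x_1,\dots,x_n,0)$ lies in $C$, and conversely any $\hat y\in C$ with $\hat y_{n+1}\neq -\infty$ yields an element $(\hat y_i-\hat y_{n+1})_{i\in[n]}$ of $P$.

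Starting from the real vector $y\in P$ with coordinates in $\R\cup\{-\infty\}$, I would form $\hat y:=(y_1,\dots,y_n,0)\in C$, whose support is $\mathrm{supp}(y)\cup\{n+1\}$, and then apply Proposition~\ref{prop-integer} to the cone $C$ (whose defining matrices $\hat A,\hat B$ still have entries in $\mathbb{Z}\cup\{-\infty\}$ since $c,d$ do). This produces $\hat z\in C$ with entries in $\mathbb{Z}\cup\{-\infty\}$ and with the same indices of finite coordinates as $\hat y$; in particular $\hat z_{n+1}$ is a finite integer.

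To return to the affine picture, I would then normalize by setting $z_i:=\hat z_i-\hat z_{n+1}$ for $i\in[n]$. Since $\hat z_{n+1}\in\mathbb{Z}$, the vector $z$ still has entries in $\mathbb{Z}\cup\{-\infty\}$, and its set of finite indices is exactly $\mathrm{supp}(\hat z)\cap[n]=\mathrm{supp}(\hat y)\cap[n]=\mathrm{supp}(y)$. Finally, because $\hat A$ and $\hat B$ are max-plus linear (hence additively homogeneous in the usual sense when restricted to finite translations of the last coordinate), the inequality $\hat A\hat z\leq \hat B\hat z$ translates, after subtracting $\hat z_{n+1}$ throughout, into $Az+c\leq Bz+d$ in the tropical sense; equivalently, $z\in P$.

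No serious obstacle is expected here: the only subtle points are to make sure that the integer solution $\hat z$ supplied by Proposition~\ref{prop-integer} has $\hat z_{n+1}$ finite (guaranteed by the preservation of the support, since $\hat y_{n+1}=0$ is finite) and that the subsequent affine normalization both preserves the integrality of the entries (immediate since $\hat z_{n+1}\in\mathbb{Z}$) and does not alter the set of indices of finite coordinates (immediate since translation by a finite scalar leaves the support unchanged).
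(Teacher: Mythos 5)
Your proposal is correct and is essentially the paper's own argument: the paper derives Corollary~\ref{cor-affine} as an immediate consequence of Proposition~\ref{prop-integer} via exactly the homogenization trick from the proof of Theorem~\ref{coro-new} (appending $c$ and $d$ as an $(n+1)$-th column and normalizing the last coordinate). Your write-up merely spells out the details the paper leaves implicit, and all the steps check out.
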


\begin{example} \label{ex-second}
Let us illustrate Theorems~\ref{theo-reduce} and~\ref{theo-reduce2}
on the base of Example~\ref{ex-first}. An invariant
half-line of $f$ is given by
\[
v= (0,\; 0 )^T, 
\qquad
\text{and}\qquad\eta=(-1,\; 5)^T
\]
The vector 
\[
v'= (-\infty,\; 0 )^T
\]
is a solution of~\eqref{sys-trop}. The fact that $\eta_1=-1<0$ implies
that there is no finite solution. (Recall that invariant half-lines can be
obtained from several mean payoff game algorithms, including~\cite{dhingra}.)
\end{example}
\begin{example}\label{ex-geom}
We now give a geometrical illustration of Theorem~\ref{theo-reduce2}.
Let $a$ denote a real parameter, and consider
the three inequalities
\begin{align*}
x_1&\leq a+\max(x_2-2,x_3-1)\quad &(H_1)\\
-2+x_2&\leq a+\max(x_1,x_3-1)\quad &(H_2)\\
\max(x_2-2,x_3-a)&\leq x_1+2\quad &(H_3)
\end{align*}
the tropical half-spaces $(H_i)$ and their intersection being shown in Figure~\ref{fig-inter}, for $a\in\{1,-1/2,-3/2\}$. 
Here, each finite vector $x\in\Rm^3$ is represented by its intersection
with the hyperplane $x_1+x_2+x_3=0$, thus, vectors with $-\infty$ coordinates
correspond to points at infinity in this picture.

The associated game is shown in Figure~\ref{fig-gameinter}.
When $a=1$, an invariant half-line of $f$ is
\[
v=(-1.5,\;0.5,\;0)^T,\qquad \eta=(1.5,\;1.5,\;1.5) \enspace .
\]
Choosing a positional strategy of Max is equivalent to selecting only one output arc in each
square state. Such a strategy $\sigma$ is shown in Figure~\ref{fig-gameinter} (middle). 
The value of the one player game in which $\sigma$ is fixed can
be easily seen to be $(2a+1)/2$. This corresponds to the payment-per-turn
ratio of the circuit shown in bold (recall that the weights of the arcs
from a circle to a square nodes must be counted negatively, whereas
the weights of arcs from square to circle nodes count positively;
moreover, each arc counts for a half-turn). There is in fact another circuit,
from circle node 3 to square node 2, and back, which has a payment-per-turn
ratio of $a+1$; since $a+1> (2a+1)/2$, the latter never arises in an optimal
reponse of Player Min to the strategy $\sigma$.
By Theorem~\ref{th-dual},
we have $\chi_i(f)\geq \chi_i(g^\sigma)=(2a+1)/2$ for all $i$,
which by Theorem~\ref{theo-reduce2}, implies that the intersection
of the three half-spaces $H_1\cap H_2\cap H_3$ is not reduced
to the identically $-\infty$ vector for all $a\geq -1/2$, which
can be checked geometrically in Figure~\ref{fig-inter}. 

Similarly, choosing a positional strategy of Min is equivalent to selecting only one output arc in each
circle state. Such a strategy $\pi$ is shown in Figure~\ref{fig-gameinter} (right).  Whenever Min chooses this strategy, Max has no better
choice than going to the same circuit in bold, showing
that $\chi_i(f)\leq \chi_i(h^\pi)=(2a+1)/2$ for all $i$. Thus,
$\chi_i(f)=(2a+1)/2$ for all $i\in[3]$. In particular,  $\chi(f)_i=\chi_i(h^\pi)<0$
for $a<-1/2$, and so, the strategy $\pi$ certifies that the intersection
$H_1\cap H_2\cap H_3$ is reduced to the identically $-\infty$ vector
in this case.

\begin{figure}[htbp]
\[
\input{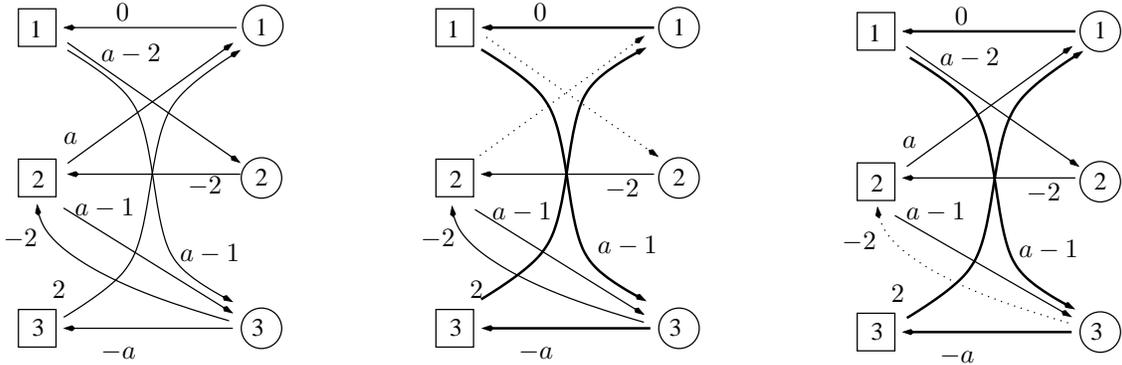}
\]
\caption{Game associated with the family of tropical half-spaces $H_i$, $1\leq i\leq 3$ 
in Example~\ref{ex-geom} (left). 
Positional strategies of Max (middle) and Min (right), both
with a mean payoff of $(2a+1)/2$, corresponding to the
circuit shown in bold (the strategies avoid the dotted arcs).}
\label{fig-gameinter}
\end{figure}
\begin{figure}[htbp]
\input{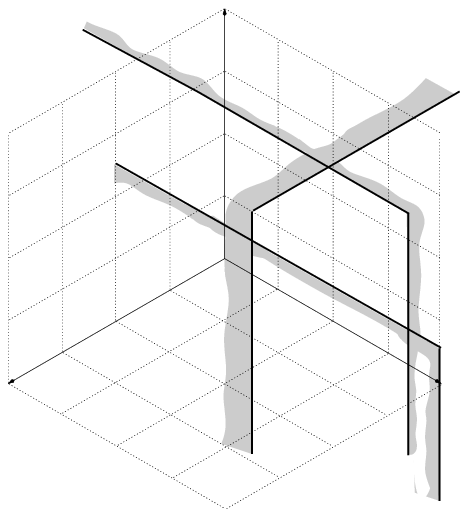}\hskip-0.1em
\input{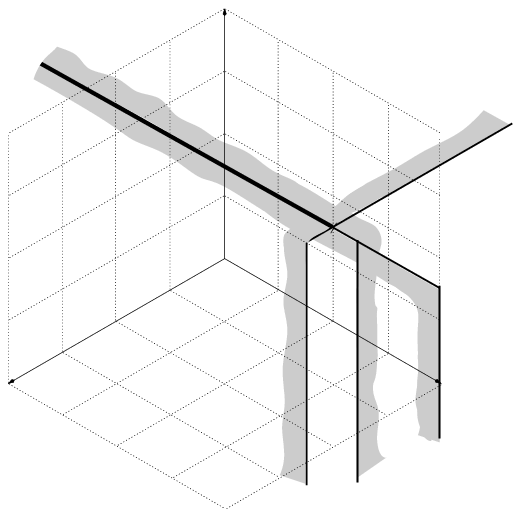}%
\input{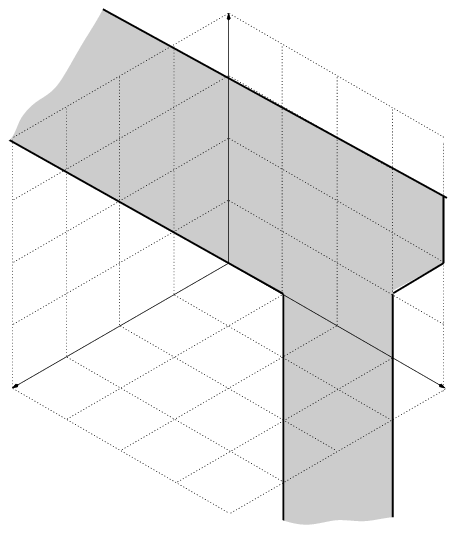}%
\caption{Example~\ref{ex-geom} (cont.); $a=-3/2$, a relative neighborhood of the boundary of each half-space is shown in gray, the intersection does not contain any finite vector (left);
$a=-1/2$, the intersection is a tropical convex cone with two generators, represented by a half-line in bold (middle); $a=1$, the intersection 
of half-spaces $H_1\cap H_2\cap H_3$ is the region in gray (right).}\label{fig-inter}
\end{figure}
\end{example}
\begin{remark}
Theorem~\ref{theo-reduce2} shows that an initial circle state 
$j\in [n]$ is winning if and only
if there is a vector $x$ 
in the associated tropical polyhedral cone
\[ P:=\{x\in (\Rm)^n\mid Ax\leq Bx\}
\enspace,
\] 
such that $x_j\neq -\infty$. 
As noted in Remark~\ref{rk-dual}, all the present constructions
admit dual versions. In particular, when the action states
are finite on both sides, so that $I=[m]$, 
it can be checked that a square state $i\in [m]$ is winning for Player Min if and only if there
is a vector $y$, in the dual
tropical polyhedral cone
\[
P':=\{y\in (\R\cup\{+\infty\})^n\mid 
A^\sharp y \leq B^\sharp y
\}
\]
such that $y_i\neq +\infty$ ($P'$ is a convex cone in the min-plus
sense). To see this, it suffices
to consider the one day operator $f(y)=BA^\sharp y$, and note
that $BA^\sharp y\leq y$ holds if and only if $A^\sharp y\leq B^\sharp y$.
\end{remark}
\begin{remark}
We note that solving the system of tropical inequalities $Ax\leq Bx$
or the system of tropical equalities $ Ax=Bx$ are computationally equivalent
problems: each of the two problems trivially reduces
to the other one. Indeed, $Ax\leq Bx$ holds if and only if
$Cx= Bx$, where $C$ denotes the matrix obtained by taking the pointwise
maximum of $A$ and $B$. Conversely, $Ax=Bx$ holds if and only if 
we have both $Ax\leq Bx$ and $Bx\leq Ax$, which is again a system
of the form~\eqref{sys-trop}, but with $2m$ inequalities instead of $m$.
\end{remark}
\begin{remark}
In~\cite{atserias}, Atserias and Maneva show that 
a mean payoff game has a nonnegative mean payoff vector
if and only if an associated ``max-atom problem''~\cite{bezem2} has a
finite integer solution. The latter is equivalent to
finding a vector $u\in \mathbb{Z}^n$ such that $z\leq f(z)$.
This is analogous to the result of~\cite{dhingra} (Corollary~\ref{theo-reduce3} here), the condition that $u\in \mathbb{R}^n$ being replaced
by $u\in \mathbb{Z}^n$. This could also
be obtained as a consequence of the present Corollary~\ref{theo-reduce3},
by using Proposition~\ref{prop-integer}, showing that solvability
over the reals and over the integers are equivalent if the data are integer.
\end{remark}
\subsection{A power algorithm to check whether Player Max has a winning state}
As it is mentioned in the introduction, the problem of
computing the value of a mean payoff game with finite state
and action spaces is a well studied one, for which several 
algorithms with a fast experimental average case execution time
are known~\cite{gurvich2,gg,gg98},
although the complexity of the problem is still
unsettled~\cite{JPZSIAM}.
Zwick and Paterson~\cite[Theorem~2.3]{zwick} showed
that the value iteration allows one to determine
the value (the vector $\chi(f)$) of a mean payoff game
in pseudo-polynomial time, assuming 
that the instantaneous payments, i.e., here, the finite entries $A_{ij}$
and $B_{ij}$, are integers. 

However, to solve Problems~\ref{pb-1} or \ref{pb-indep}, we only need
to decide whether there is one winning state, and then,
the value iteration algorithm can be refined
by exploiting the Collatz-Wielandt
property,
which leads to the algorithm that we next describe.

We assume here that $I=[m]$, that Assumptions~\ref{assump1} and \ref{assump2}
hold, take $f=A\res B$, and set
\[
g(x)=\min(f(x),x) \enspace .
\]
\begin{lemma}
We have $\bar\chi(g)\geq 0$ if and only if $\bar\chi(f)\geq 0$.
\end{lemma}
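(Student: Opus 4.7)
The plan is to verify that $g$ fits into the Collatz--Wielandt framework of Lemma~\ref{lemma-allequal} and then chase the equivalence through the characterisation $\bar\chi(\cdot)=\rho(\cdot)=\cw'(\cdot)$.

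First I would check that $g$ is an order-preserving, additively homogeneous, continuous self-map of $(\Rm)^n$. Under Assumption~\ref{assump1}, $f=A\res B$ preserves $(\Rm)^n$, hence so does $g=\min(f,\operatorname{id})$. Order-preservation, additive homogeneity and continuity of $g$ follow entrywise from the corresponding properties of $f$ and of $\min$. Thus Lemma~\ref{lemma-allequal} applies to both $f$ and $g$, so $\bar\chi(f)=\rho(f)=\cw'(f)$ and $\bar\chi(g)=\rho(g)=\cw'(g)$.

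For the direction $\bar\chi(g)\geq 0\Rightarrow\bar\chi(f)\geq 0$, the cleanest route is to notice that $g\leq f$ pointwise. Since $f$ is order-preserving, an easy induction gives $g^N\leq f^N$ for every $N$, so $\max_{j\in[\ncols]}g^N_j(x)/N\leq\max_{j\in[\ncols]}f^N_j(x)/N$; passing to the limit yields $\bar\chi(g)\leq\bar\chi(f)$, and the implication follows.

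For the converse, suppose $\bar\chi(f)\geq 0$. By Lemma~\ref{lemma-allequal}, $\rho(f)=\bar\chi(f)\geq 0$ is attained, so there exists $u\in(\Rm)^n$, $u\not\equiv-\infty$, with $f(u)=\rho(f)+u\geq u$. Then
\[
g(u)=\min(f(u),u)=u=0+u,
\]
which shows $\cw'(g)\geq 0$, and invoking Lemma~\ref{lemma-allequal} once more gives $\bar\chi(g)=\cw'(g)\geq 0$. No step is really the main obstacle here; the only subtle point is to make sure the Collatz--Wielandt property is available for $g$ (i.e.\ that $g$ still sends $(\Rm)^n$ into itself and inherits the three structural properties from $f$), after which the equivalence is a one-line manipulation of the inequality $f(u)\geq u$.
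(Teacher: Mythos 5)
Your proof is correct and follows essentially the same route as the paper's: the forward implication via $g\leq f$ and monotonicity of $\bar\chi$, and the converse by evaluating $g$ at an eigenvector $u$ of $f$ for the eigenvalue $\rho(f)=\bar\chi(f)\geq 0$, which forces $g(u)=u$ and hence $\rho(g)\geq 0$. The extra care you take in verifying that $g$ satisfies the hypotheses of Lemma~\ref{lemma-allequal} is a reasonable addition but does not change the argument.
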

\begin{proof}
Since $g\leq f$, we have $g^N(x)\leq f^N(x)$ for all $N$ and
for all $x$, and so, $\bar\chi(g)\leq \bar\chi(f)$. 
Hence, $\bar\chi(g)\geq 0$ implies $\bar\chi(f)\geq 0$.
To show the converse, let us 
take a vector $u\in (\R\cup\{-\infty\})^n$, not identically
$-\infty$, such that $f(u)= \rho(f)+u$. If 
$\bar\chi(f)\geq 0$, then, $\rho(f)=\bar\chi(f)\geq 0$, and so, $g(u)=\min(\rho(f)+u,u)= u$, 
which implies that $\bar\chi(g)=\rho(g)\geq 0$. 
\end{proof}
Observe that if $\bar\chi(g)<0$, we must have $\bar\chi(g)\leq -1/(n+m)$.
Indeed, since the mean payoff game admits optimal positional
strategies, $\rho(g)$ must be equal to the weight-to-length
ratio of an elementary circuit in the graph of the game,
and such a circuit must have a length at most $n+m$ and a negative integer
weight.

Define now the sequence 
\begin{align}
x^0=0,\qquad x^{k+1} =g(x^k) \enspace .
\end{align}

We shall make several observations.

\begin{enumerate}
\item\label{power1} If $x^k_j<0$ for all $j\in[\ncols]$, then, $\bar\chi(g)<0$.

Indeed, we have $\cw(g^k)\leq \max_{j\in[\ncols]}(x^k_j -x^0_j)<0$, and
since, by Remark~\ref{rk-iter}, $\cw(g^k)=k\cw(g)$, we deduce
that $\bar\chi(g)=\cw(g)<0$. 

\item\label{power2} If $\bar\chi(g)<0$, then $x_j^k<0$ holds for
all $j\in[\ncols]$ as soon as
 $k\geq K:=2(n+m)^2M+1$,
where $M$ is the maximal modulus
of the integers appearing as the finite coefficients of $A,B$. 

Indeed, Theorem~2.2 of~\cite{zwick} shows that
$x^k_i \leq k \chi_i(g) + 2(n+m) M$.
Hence, if $\bar\chi(g)<0$, we get 
$x^k_i\leq -k/(n+m)+ 2(n+m)M$, and
the result follows.

\item\label{power3} %
If $x^{k+1}=x^k$,
then, we readily conclude that $\bar\chi(g)=\rho(g)\geq 0$,
since $x^k\in\R^n$ (by the assumptions on $A$ and $B$, $f$ preserves
$\R^n$).

\item\label{power4} Let $J_k:=\{j\in[\ncols]\mid x^{k+1}_j <x^k_{j}\}$,
assume that $J_{k}$ is non-empty and different
from $[\ncols]$, 
and define $y^k$ by 
\[ 
y^k_j=\begin{cases} x^k_j & \text{ if } j\in [\ncols]\setminus J_k\\
-\infty & \text{otherwise}
\end{cases}
\]
By construction, $y^k$ is a non-identically $-\infty$ vector,
and it is a candidate to be a fixed point of $g$. 
If $g(y^k)=y^k$, we must have $\bar\chi(g)\geq 0$.
\end{enumerate}
The previous observations justify the following {\em power type} algorithm,
which consists in computing the sequence $x^k$. If, for some $k$, 
Condition~\eqref{power1} is satisfied, we stop the algorithm, $x^k$ being a certificate
that $\bar\chi(g)<0$. Similarly, if for some $k$, Condition~\eqref{power3} or Condition~\eqref{power4}
is satisfied, we stop the algorithm, $x^k$ or $y^k$ 
being a certificate
that $\bar\chi(g)\geq 0$. Finally, if step $k=K$ is reached,
we must have $\bar\chi(g)\geq 0$ and stop the algorithm.

This algorithm requires at most $K$ iterations, and since one
iteration takes a linear time, the algorithm is pseudo-polynomial.
However, the practical interest of testing 
Conditions~\eqref{power3} or~\eqref{power4} in addition to Condition~\eqref{power1} is that they %
can frequently be met
before the termination time $K$ is reached. A more precise
complexity analysis is beyond the scope of this paper.

\begin{remark}
The previous power algorithm can be initialized with
an arbitrary vector $x^0\in \R^n$. This reduces
to the former case if we replace $f(x)$ by $x^0+f(x-x^0)$.
Then, the payments and so the constant $M$ must be modified
accordingly, and the stopping
condition~\eqref{power1} becomes $x^k_j<x^0_j$ for all $j\in[\ncols]$.
\end{remark}

\section{Mean payoff games expressing tropical linear independence}
\subsection{Extension of the tropical semiring and linear independence}
In tropical algebra, the set of ``zeros'' of an expression
is generally defined by the requirement
that the maximum of the terms arising in this expression
is attained at least twice. 
The notation $\llq * =0\rrq$ is often used in this sense informally. This
notation can in fact be given a formal meaning, by using
an extension of the tropical semiring, which was introduced
by Izhakian~\cite{Izh}. The latter may be thought of as the ``complex''
analogue of the ``real'' (signed) extension of the tropical semiring
introduced by M. Plus~\cite{Plus}. In a nutshell, the ``numbers'' of the extension of Izhakian carry an information
reminding whether the maximum of an expression is attained twice, whereas
the ``numbers'' of the extension of M. Plus carry a sign information, reminding
whether the maximum of a signed formal expression is attained by
a positive term, by a negative one, or both.  
The approach of~\cite{Izh} has
been pursued in several works of Izhakian and Rowen like~\cite{izhrowen2}, whereas the authors
have studied in~\cite{AGG08} semirings with an abstract involution,
in order to unify both approaches. 
Such extensions provide a convenient notation, and, as shown in~\cite{Plus,AGG08}, they allow one to perform elimination arguments, as in the Gauss algorithm,
while staying at the tropical level, and to obtain automatically polynomial identities over semirings.

Although our primary interest is in the basic max-plus case, we shall
establish our results in the framework of the extended tropical semiring,
which leads to slightly more general results. The reader interested
only by the max-plus case may skip the present section,
and specialize the further sections by assuming that the
matrices have entries in the max-plus semiring rather than in its
extension.

The presentation which follows is a simplified version of~\cite{AGG08}.

\begin{definition}
Let $\N_{2}$ be the semiring which is the quotient of the semiring $\N$ of nonnegative integers by the equivalence relation which identifies all numbers greater than
or equal to $2$, and denote $\N_{2}^*=\N_2\setminus \{ 0 \}$ and 
 $\rmax^*=\rmax\setminus\{-\infty\}$.
The \NEW{extended tropical semiring} is the subset of
$\N_2\times \rmax$:
\[ \Ti:=(\N_2^*\times \rmax^*) \cup \{(0,-\infty )\}\] 
endowed with the addition
$$
(a,b)\oplus (a',b')=\left\{\begin{array}{lcl}
(a+a',b) & \text{ if } & b=b' \\
(a,b) & \text{ if } & b>b' \\
(a',b') & \text{ if } & b<b' \end{array} \right.
$$
and the multiplication
$$
(a,b)\odot (a',b')=(a \cdot a', b +  b').
$$
\end{definition}
The extended tropical semiring as defined above is a semiring with 
zero %
$\zero:=(0 ,-\infty )$ and unit $\unit:=(1,0 )$  and it is isomorphic to the 
extended tropical semiring defined in~\cite{Izh} (see~\cite{AGG08}).

The semiring $\Ti$ is not idempotent, but is ordered naturally by the relation:
$x\leq y$ if there exists $z\in\Ti$ such that $x\oplus z=y$.
The map $\proj:\Ti\to \rmax, \;(a,b)\mapsto \proj (a,b) := b$ 
is a surjective morphism, thus it is order preserving.
However the natural injection from $\rmax$ to $\Ti$,
which sends $b\in\rmax^*$ to $(1,b)$ 
and  $-\infty$ to $(0 ,-\infty )$ is not a morphism.
Nevertheless, it is a multiplicative morphism, it is order preserving 
and denoting by 
$\inj{b}$ the image of $b\in\rmax$ by this injection,
the following holds for all $x,y\in\Ti$:
\begin{equation}\label{inj-quasi-morph}
x^\vee\vee y^\vee \leq (x\oplus y)^\vee\leq x^\vee \oplus y^\vee\enspace,
\end{equation}
where $a\vee b$ denotes the least upper bound of two elements $a,b\in \Ti$.

The following notations are defined in~\cite{AGG08} for
any semiring with symmetry. We avoid here the use of the minus sign.

\begin{definition}
For any $a\in \Ti$, we set $a^\circ:=a\oplus a$, and we denote
\[
\Ti^\circ := \set{a^\circ}{a\in \Ti }  ,\qquad  \Ti^\vee:=(\Ti\setminus \Ti^\circ )\cup\{(0 ,-\infty )\}\enspace ,
\]
and we define  on $\Ti$ the \NEW{balance} relation $\balance$ by 
\[ a \balance b\iff a\oplus b\in \Ti^\circ \enspace .
\]
\end{definition}
The balance relation is reflexive, symmetric but not transitive.
Denoting $b^\circ:= (\inj b)^\circ$ for $b\in\rmax$, we get that
\[
\Ti^\circ = \set{b^\circ}{b\in \rmax }  ,\qquad  \Ti^\vee=
\set{\inj b}{b\in \rmax }\enspace .
\]
We shall say that an element of $\Ti$ is of type
{\em real} if it belongs to $\Ti^\vee$ and of type
{\em ghost} if it belongs to $\Ti^\circ$ (thus, the zero
element of the semiring has both types).
An element $a$ of $\Ti$ is determined by its projection
$\pi(a)\in \rmax$ and by its type. The elements of
$\Ti^\vee\setminus\{\zero\}$ are precisely the
invertible elements of $\Ti$.

This algebraic structure encodes whether the maximum
in an expression is attained once or at least twice.
The elements $b^\vee$ with $b\in \rmax^*$ correspond
to expressions the maximum of which is finite and is attained
only once, the elements $b^\circ$ with $b\in \rmax^*$ correspond
to expressions  the maximum of which is finite and is attained
at least twice, the element $\zero\in \Ti^\circ\cap\Ti^\vee$ corresponds
to expressions the maximum of which is $-\infty$. For instance,
the following computations are valid
\[
2^\vee \oplus 2^\vee =2^\circ, \;\; 2^\vee \oplus 3^\vee = 3^\vee,\;\;
2^\vee \oplus 3^\circ= 3^\circ \enspace .
\]

The previous notations will be extended to vectors, entrywise. For instance,
if $x,y\in \Ti^n$, we shall write $x\balance y$ if $x_j\balance y_j $
for all $j\in[\ncols]$.

\begin{definition} \label{def:trop-dependent-in-T}
If $A$ is a matrix in $\cM_{m,n}(\Ti)$, we shall say that the columns
of $A$ are tropically linearly dependent if there exists
a vector $x\in (\Ti^\vee)^n$, different from the zero vector $\zero$,
such that 
\[
Ax\balance \zero \enspace . 
\]
\end{definition}
When $A$ is the image of some matrix $B\in \mnmatrices$ by the
canonical injection, meaning that $A_{ij}=B_{ij}^\vee$, setting
$x=y^\vee$ for some $y\in \rmax^n$, we easily check that $Ax\balance \zero$
holds if and only if the maximum in each of the expressions
\[
\max_{j\in[\ncols]} (B_{ij}+y_j) \enspace , \qquad i\in[\nrows]\enspace,
\]
is attained at least twice, or equal to $-\infty$, which is
the natural notion of tropical linear dependence over $\rmax$
given in the introduction (statement of Problem~\ref{pb-indep}).

Thus, all the statements that follow which concern tropical linear
independence over $\Ti$ yield in particular %
corresponding
statements 
for tropical linear independence over $\rmax$. The interest
of the notation $Ax\balance \zero$ is its similarity with
the classical notation $Ax=0$ (the columns of a matrix
over a ring are dependent if some nontrivial
linear combination of the columns vanishes).

Tropical linear independence turns out to be controlled
by permanents.
\begin{definition} \label{de:detS}
Let $A=(A_{ij})\in\cM_{n,n}(\Ti)$.
The permanent $\per{A}$ of the matrix $A$ is the element
of $\Ti$ defined by 
$$\per{A}= \bigoplus_{\sigma\in \allperm_n} 
 A_{1\sigma(1)}\cdot\cdots\cdot A_{n\sigma(n)} ,$$
where $ \allperm_n$ denotes the set of all permutations of the set 
$[\ncols]$.
\end{definition}
If $A_{ij}=B_{ij}^\vee$ for some $B_{ij}\in \rmax$, then,
the projection onto $\rmax$ of the permanent of $A$,
\[
\proj (\per A) = \max_{\sigma\in \allperm_n} 
 (B_{1\sigma(1)}+\cdots+ B_{n\sigma(n)})
\enspace ,
\]
is the value of the optimal assignment problem with weights $B_{ij}$.
The type of $\per A$ is real if there is only one optimal
permutation, or if the value of the previous maximum
is $-\infty$, and it is ghost if there are at least two
optimal permutations. 
Moreover, $\per A$ is invertible if and only if $B$ is 
tropically nonsingular as defined in the introduction
(see Section~\ref{sec1.4}).
This suggests the following definition.
\begin{definition} \label{def:trop-nonsingular-in-T}
We shall say that the %
matrix $A\in\cM_{n,n}(\Ti)$ is
{\em tropically nonsingular} if $\per A$ is invertible in $\Ti$.
\end{definition}

In the sequel, we shall establish results %
for matrices with entries in the extended tropical semiring $\Ti$. 
Then, we shall derive the analogous results for matrices
with entries in the tropical semiring as immediate corollaries.

\subsection{Reducing tropical linear independence to mean payoff games}\label{sec-linindep}

We denote by $A$ an $m \times n$ matrix with entries in $\Ti$,
and we shall assume:
\begin{assump}\label{assumpacol}
The matrix $A$ has no column %
consisting only of elements of $\Ti^\circ$. 
\end{assump}
This assumption
is not restrictive%
, for if $A$ had such a 
column, the columns of $A$ were tropically linearly dependent,
and when $m\geq n$, all $n\times n$ submatrices were tropically singular,
so that the equivalence 
which we are going to prove
in Theorem~\ref{th-main} 
for matrices satisfying Assumption~\ref{assumpacol}
is trivially 
true in this situation.

We set
\begin{subequations}
\begin{equation}
E=\set{(i,j)}{A_{ij}\in \Ti^\vee\setminus\{\zero\}} \enspace .
\end{equation}
Thanks to Assumption~\ref{assumpacol}, for all $j\in[\ncols]$, there
is at least one index $i\in[\nrows]$ such that $(i,j)\in E$.

We define the min-max function
$f: (\R\cup\{-\infty\})^n\to (\R\cup\{-\infty\})^n$ given by
\begin{equation}\label{def-f-dep}
f_j(x)=\min_{i\in[\nrows], \; (i,j)\in E} (-  B_{ij}+\max_{k\in[\ncols],\; k\neq j} %
(B_{ik} + x_k)) \enspace ,
\end{equation}
where
\begin{equation}\label{B=piA}
B_{ij}:=\proj A_{ij}\in \rmax \enspace .
\end{equation}
\label{fandB}
\end{subequations}
This function can be interpreted as the dynamic programming operator
of the following combinatorial game, which is played on
a bipartite digraph with $n$ column nodes and $m$ row nodes.
Being
in a column node $j$, player Min chooses a row node $i$ such that 
$(i,j)\in E$, and moves to node $i$ receiving $B_{ij}$. Then, player Max must
move to some column node $k$ which is different from the previously
visited column node $j$, and he receives $B_{ik}$. Thus, 
when all entries of $A$ are in  $\Ti^\vee$ (that is $A=B^\vee$),
player Min is advantaged, because she can always come back
to the state from which player Max just came, ensuring her a $0$ loss.
In that case, it follows that $\bar\chi(f) \leq 0$.

Such a game
may be put in the form studied in Section~\ref{sec-gen},
in which the available actions only depend on the current state,
by adding the previously visited node to the state. Formally,
the map $f$ may be written as $f(x)=C\res Dx$, where
$C,D$ are $(mn)\times  n$ matrices, with 
\begin{equation}\label{defCD}
 C_{(i,j),k}=\begin{cases}B_{ij} & \text{ if }k= j\text{ and }(i,j)\in E\\
-\infty & \text{ otherwise,}
\end{cases}
\qquad D_{(i,j),k}=\begin{cases}B_{ik} & \text{ if }k\neq j\\
-\infty & \text{ otherwise.}
\end{cases}
\end{equation}
Due to Assumption~\ref{assumpacol}, no column of $C$ is identically
$-\infty$, hence $f$ sends $(\R\cup\{-\infty\})^n$ to itself.
However, some rows of $D$ may be identically $-\infty$, 
as soon as $A$ has a row with at most one
element not equal to $-\infty$.
In that case the map $f$ may not send $\R^n$ to itself.
But one can apply Proposition~\ref{exist-chibar},
Lemma~\ref{lemma-allequal} and Theorem~\ref{theo-reduce}.

\begin{theorem}\label{th:u<f(u)} %
Let $A$ be an $m \times n$ matrix with entries in $\Ti$, satisfying 
Assumption~\ref{assumpacol}.
Let $E$, $B$ and $f$ be defined as in~\eqref{fandB}. Then
a vector $u\in \rmax^n$ is such that 
\[
Au^\vee \balance \zero
\]
if and only if $u\leq f(u)$.
\end{theorem}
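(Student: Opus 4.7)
The plan is to translate both sides of the claimed equivalence into explicit max-plus conditions and then verify them case-by-case on each row~$i$.

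First, I would unpack the balance relation. Since $u^\vee_j \in \Ti^\vee$, the product $A_{ij} \odot u_j^\vee$ has projection $B_{ij}+u_j$ and is of real type exactly when $A_{ij}\in \Ti^\vee$ (i.e.\ $(i,j)\in E$), and of ghost type when $A_{ij}\in \Ti^\circ$. Setting $M_i := \max_{j\in[n]} (B_{ij}+u_j)$ and $J_i^*:=\{j : B_{ij}+u_j = M_i\}$, a direct inspection of the addition rule of $\Ti$ shows
\[
(Au^\vee)_i \in \Ti^\circ \;\Longleftrightarrow\; M_i = -\infty, \text{ or some } j\in J_i^* \text{ has } A_{ij}\in\Ti^\circ, \text{ or } |J_i^*|\geq 2.
\]
On the other side, $u\leq f(u)$ unpacks (using the definition~\eqref{def-f-dep} of $f$) into the system
\[
B_{ij}+u_j \leq \max_{k\neq j}(B_{ik}+u_k) \qquad \forall (i,j)\in E.
\]

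For the forward implication, I would fix $(i,j)\in E$ and argue by contradiction: if $B_{ij}+u_j > \max_{k\neq j}(B_{ik}+u_k)$, then $M_i = B_{ij}+u_j$ is finite and $J_i^* = \{j\}$ with $A_{ij}\in \Ti^\vee$. The reformulation of the balance condition then shows $(Au^\vee)_i \not\in \Ti^\circ$ (it equals the nonzero real element $(B_{ij}+u_j)^\vee$), contradicting $Au^\vee\balance\zero$. Hence the inequality for $f$ holds at every $(i,j)\in E$.

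For the backward implication, fix a row $i$. If $M_i=-\infty$ or some $j\in J_i^*$ has $A_{ij}\in\Ti^\circ$, then $(Au^\vee)_i \in \Ti^\circ$ by the equivalence above. Otherwise every $j\in J_i^*$ satisfies $(i,j)\in E$; picking any such $j_0$ and applying $u_{j_0}\leq f_{j_0}(u)$ to the index $i$, we get $B_{ij_0}+u_{j_0}\leq \max_{k\neq j_0}(B_{ik}+u_k)$, which forces the existence of another index $k\neq j_0$ with $B_{ik}+u_k = M_i$, so $|J_i^*|\geq 2$. Again $(Au^\vee)_i\in\Ti^\circ$.

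The main obstacle I expect is purely bookkeeping: keeping the three modes of ``being ghost'' (identically $-\infty$ row; a single ghost contribution at the maximum; two real contributions colliding at the maximum) clearly separated, and being careful that $u_j$ may take the value $-\infty$ so that the finiteness of $B_{ij}$ alone does not make $B_{ij}+u_j$ finite. Assumption~\ref{assumpacol} plays only the minor role of guaranteeing that $f$ is well defined on $(\Rm)^n$; it is not used in the equivalence itself.
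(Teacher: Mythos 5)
Your proof is correct and follows essentially the same route as the paper's: the paper establishes the chain of equivalences $Au^\vee\notbalance\zero \iff \exists (i,j)\in E \text{ with } B_{ij}+u_j>\max_{k\neq j}(B_{ik}+u_k) \iff \exists j \text{ with } u_j>f_j(u)$ in contrapositive form, which is exactly the computation you carry out (with the case analysis on $M_i$ and $J_i^*$ spelled out more explicitly, and the correct minor observation about the role of Assumption~\ref{assumpacol}).
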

\begin{proof}
If $A\,\inj{u}\notbalance\zero$, then, there exists some index
$i$ such that $(A\,\inj{u})_i=\bigoplus_j A_{ij}\odot \inj{u}_j$ is invertible in $\Ti$, which implies that there exists some index $j$ such that $A_{ij}$ is invertible and $\proj A_{ij}+ u_j >\max_{k\in[\ncols],\; k\neq j}(\proj A_{ik}+u_k)$. With the above definitions of $B$ and $E$, it follows that $(i,j)\in E$
and $u_j>-B_{ij} + \max_{k\in[\ncols],\; k\neq j}  (B_{ik}+u_k)$. 
We deduce that 
$u_j >\min_{i\in[\nrows],\; (i,j)\in E} (-B_{ij} + \max_{k\in[\ncols],\; k\neq j} (B_{ik}+u_k))=f_j(u)$. The previous deductions turn out to be equivalences,
and so, $A\,\inj{u} \notbalance\zero$ iff there exists some index $j$ such that 
$u_j> f_j(u)$. By negating both conditions, this shows the theorem.
\end{proof}

We get as an immediate consequence.
\begin{corollary} \label{c:u<f(u)}
Let $B$ be an $m \times n$ matrix with entries in $\rmax$  which has no column consisting only of elements~$-\infty$. %
Denote $E=\set{(i,j)}{B_{ij}\neq -\infty }$, and define $f$ 
by~\eqref{def-f-dep}.
Let $u$ be a vector in $(\Rm)^n$, not identically $-\infty$.
Then, the following conditions are equivalent:
\begin{enumerate}
\item
$u\leq f(u)$;
\item The equation $\llq Bu =0\rrq$ holds in the tropical sense,
meaning that in every expression
\[
\max_{j\in [n]}(B_{ij}+u_j),\qquad i\in [m]
\]
the maximum is attained at least twice or is equal to $-\infty$;
\item All the rows of the matrix $B$ are contained in the tropical hyperplane consisting of those vectors $x\in \rmax^n$ such that the maximum in $\max_{j\in[n]} (x_j+u_j)$ is attained at least twice or is equal to $-\infty$. \qed
\end{enumerate}
\end{corollary}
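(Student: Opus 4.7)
The plan is to derive the corollary by specializing Theorem~\ref{th:u<f(u)} to the image of $B$ under the canonical injection $\rmax\hookrightarrow\Ti$, and then observing that the third condition is merely a geometric reformulation of the second.

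First I would set $A\in\cM_{\nrows,\ncols}(\Ti)$ to be the matrix with entries $A_{ij}:=B_{ij}^\vee$. Because the injection $b\mapsto b^\vee$ sends $-\infty$ to $\zero$ and every finite $b\in\rmax^*$ to an invertible element of $\Ti^\vee$, the hypothesis that $B$ has no column consisting only of $-\infty$ entries translates into Assumption~\ref{assumpacol} for $A$, and the set $E$ and the matrix $B=\pi(A)$ that enter the definition of $f$ in~\eqref{fandB} coincide with the data given in the corollary. Thus Theorem~\ref{th:u<f(u)} applies directly to $A$ and to the given map $f$.

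Next I would spell out what $Au^\vee\balance \zero$ means when $A=B^\vee$. For each row $i$, the $i$th coordinate of $Au^\vee$ is $\bigoplus_{j\in[\ncols]} B_{ij}^\vee\odot u_j^\vee$; this lies in $\Ti^\circ$ (i.e., is balanced with $\zero$) precisely when the real maximum $\max_{j}(B_{ij}+u_j)$ is either equal to $-\infty$ or attained by at least two indices $j$. This is exactly condition~(2). Combined with Theorem~\ref{th:u<f(u)}, this yields the equivalence (1) $\Leftrightarrow$ (2).

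Finally, for (2) $\Leftrightarrow$ (3), I would unfold the definition of the tropical hyperplane determined by $u$: a vector $x\in\rmax^\ncols$ lies in it when the maximum in $\max_{j\in[\ncols]}(x_j+u_j)$ is attained at least twice or equals $-\infty$. Applying this to $x=(B_{i1},\ldots,B_{i\ncols})$ for each $i\in[\nrows]$ gives, row by row, the statement of condition~(2). Since every step in the argument is a direct translation, there is no real obstacle; the only thing to be careful about is matching the degenerate case where the maximum is $-\infty$ across the three formulations, which is consistent because $\zero\in\Ti^\vee\cap\Ti^\circ$ by definition.
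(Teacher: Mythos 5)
Your proposal is correct and follows exactly the route the paper intends: the paper presents this corollary as an ``immediate consequence'' of Theorem~\ref{th:u<f(u)} obtained by taking $A=B^\vee$, and the identification of $Au^\vee\balance\zero$ with condition~(2) is precisely the observation the paper makes right after Definition~\ref{def:trop-dependent-in-T}. Your verification of Assumption~\ref{assumpacol} and of the matching of $E$ and $\pi(A)=B$, together with the trivial reformulation (2)~$\Leftrightarrow$~(3), fills in the omitted details faithfully.
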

The following theorem %
provides an expression of
tropical linear independence
in terms of mean payoff games.
\begin{theorem}\label{th:4equivmaxplus}
Let $B$ be an $m \times n$ matrix with entries in $\rmax$  which has no column consisting only of elements~$-\infty$.  %
Denote $E=\set{(i,j)}{B_{ij}\neq -\infty }$, and define $f$ 
by~\eqref{def-f-dep}. 
The following assertions are equivalent.
\begin{enumerate}
\item\label{th:4equiv-1} 
The columns of the matrix $B$ are tropically independent;
\item \label{th:4equiv-2} Player Max has no winning state in the mean payoff game with dynamic programming operator $f$, i.e., $\bar\chi(f)<0$;
\item \label{th:4equiv-3}
 there exists a vector $w\in \R^n$ and a scalar $\lambda<0$ such that
\[
f(w)\leq \lambda +w \enspace ;
\]
\item \label{th:4equiv-4} there is no vector $u\in (\Rm)^n$, 
without finite entries,
such that $u\leq f(u)$ \enspace . %
\end{enumerate}
\end{theorem}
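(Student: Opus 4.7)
The plan is to cycle through the four conditions, exploiting the fact that the map $f$ in~\eqref{def-f-dep} is, under Assumption~\ref{assumpacol}, an order-preserving, additively homogeneous, and continuous self-map of $(\Rm)^n$. This is precisely the setting in which Lemma~\ref{lemma-allequal} (Collatz-Wielandt) applies, yielding $\rho(f)=\cw(f)=\cw'(f)=\bar\chi(f)$. The second main ingredient is Corollary~\ref{c:u<f(u)}, which translates tropical linear dependence of the columns of $B$ into the existence of a non--identically $-\infty$ fixed super-solution $u\leq f(u)$.

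First I would dispose of the equivalence (1)$\Leftrightarrow$(4): by Corollary~\ref{c:u<f(u)}, the columns of $B$ are tropically dependent exactly when some non--identically $-\infty$ vector $u\in(\Rm)^n$ satisfies $u\leq f(u)$, and the negation of this is precisely (4). Next, (3)$\Rightarrow$(2) follows immediately from the definition of $\cw(f)$: any pair $(w,\lambda)$ with $w\in\R^n$ and $f(w)\leq\lambda+w$ shows $\cw(f)\leq\lambda<0$, and Lemma~\ref{lemma-allequal} then gives $\bar\chi(f)=\cw(f)<0$. Conversely, (2)$\Rightarrow$(3) is just the reverse reading of the infimum: if $\cw(f)=\bar\chi(f)<0$, pick any $\lambda$ with $\cw(f)<\lambda<0$ and invoke the definition of $\cw(f)$ as an infimum to produce a witness $w\in\R^n$ with $f(w)\leq\lambda+w$.

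To close the cycle, I would prove (2)$\Leftrightarrow$(4) using the symmetric Collatz-Wielandt quantity $\cw'(f)$. For (2)$\Rightarrow$(4): if there existed $u\in(\Rm)^n$ not identically $-\infty$ with $u\leq f(u)$, then taking $\lambda=0$ in the definition of $\cw'(f)$ would give $\cw'(f)\geq 0$, whence $\bar\chi(f)\geq 0$ by Lemma~\ref{lemma-allequal}, contradicting (2). For (4)$\Rightarrow$(2): if $\bar\chi(f)\geq 0$, then $\rho(f)=\bar\chi(f)\geq 0$ is attained, so Lemma~\ref{lemma-allequal} yields a non--identically $-\infty$ eigenvector $u$ with $f(u)=\rho(f)+u\geq u$, violating (4).

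I do not expect any real obstacle: the theorem is essentially a dictionary between the spectral/Collatz-Wielandt picture of Lemma~\ref{lemma-allequal} and the dependence criterion of Corollary~\ref{c:u<f(u)}. The only small point of vigilance is that $f$ need not preserve $\R^n$ (as noted after~\eqref{defCD}), so one must be careful to apply Lemma~\ref{lemma-allequal} in its stated form for self-maps of $(\Rm)^n$ rather than the more familiar $\R^n$-version, and to produce the witness $w$ in (3) in $\R^n$ (not merely in $(\Rm)^n$), which is automatic since $\cw(f)$ is defined as an infimum over $w\in\R^n$.
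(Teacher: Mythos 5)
Your proof is correct and follows essentially the same route as the paper: the equivalence (1)$\Leftrightarrow$(4) via Corollary~\ref{c:u<f(u)} (i.e.\ Theorem~\ref{th:u<f(u)}), and the equivalence of (2), (3), (4) by reading (3) as $\cw(f)<0$ and (4) as $\cw'(f)<0$ and invoking Lemma~\ref{lemma-allequal}. The point of vigilance you flag (that $f$ need not preserve $\R^n$, so the $(\Rm)^n$-version of the Collatz--Wielandt lemma is the one needed) is exactly the remark the paper makes before stating the theorem.
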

In fact, we shall prove the following more general result, in the
setting of the extended tropical semiring.

\begin{theorem}\label{th:4equiv}
Let $A$ be an $m \times n$ matrix with entries in $\Ti$, satisfying 
Assumption~\ref{assumpacol}.
Let $E$, $B$ and $f$ be defined as in~\eqref{fandB}. Then,
the columns of the matrix $A$ are tropically linearly independent
if and only if the map $f$ satisfies one of the three equivalent
conditions~{\rm(\ref{th:4equiv-2},\ref{th:4equiv-3},\ref{th:4equiv-4})}
of Theorem~\ref{th:4equivmaxplus}. 

\end{theorem}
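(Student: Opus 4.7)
The plan is to derive Theorem~\ref{th:4equiv} by combining Theorem~\ref{th:u<f(u)} with the non-linear Collatz--Wielandt identities of Lemma~\ref{lemma-allequal}. Theorem~\ref{th:u<f(u)} already converts tropical linear dependence into the existence of a sub-fixed point of $f$: the columns of $A$ are tropically linearly dependent iff there exists $u \in \rmax^n$, not identically $-\infty$, with $u \leq f(u)$. Hence tropical linear independence is equivalent to the non-existence of such a $u$, which is condition~(\ref{th:4equiv-4}) up to notation. All that remains is to translate this non-existence into the spectral statements~(\ref{th:4equiv-2}) and~(\ref{th:4equiv-3}).

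First I would verify the hypotheses of Lemma~\ref{lemma-allequal} for $f$. Under Assumption~\ref{assumpacol}, no column of the matrix $C$ in~\eqref{defCD} is identically $-\infty$, so $f=C\res D$ is well defined as a self-map of $(\Rm)^n$; as a min-max function it is order-preserving, additively homogeneous, and continuous. Lemma~\ref{lemma-allequal} then delivers
\[
\cw'(f)=\rho(f)=\cw(f)=\bar\chi(f),
\]
together with the fact that $\rho(f)$ is attained by some eigenvector $u\not\equiv-\infty$. The existence of $u\not\equiv-\infty$ with $u\leq f(u)$ is therefore equivalent to $\cw'(f)\geq 0$: the forward direction is immediate from the definition of $\cw'(f)$ taking $\lambda=0$, while the reverse follows by taking a $\rho(f)$-eigenvector and using $\rho(f)=\cw'(f)\geq 0$ to get $f(u)=\rho(f)+u\geq u$. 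Composing with the equality $\cw'(f)=\bar\chi(f)$ yields (1)$\iff$(2).

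For (2)$\iff$(3), I would use the definition of $\cw(f)$ in~\eqref{eq-def-cw} together with $\cw(f)=\bar\chi(f)$: $\bar\chi(f)<0$ is equivalent to $\cw(f)<0$, and the latter, by the infimum definition, is equivalent to the existence of $w\in\R^n$ and $\mu<0$ with $f(w)\leq \mu+w$; renaming $\mu=\lambda$ gives condition~(\ref{th:4equiv-3}). Finally, the equivalence with~(\ref{th:4equiv-4}) is precisely the application of Theorem~\ref{th:u<f(u)} noted in the first paragraph. Theorem~\ref{th:4equivmaxplus} follows as the special case where $A$ is the image under the canonical injection $\rmax\hookrightarrow\Ti$ of a matrix $B$ with entries in $\rmax$.

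The one step that requires care is that, unlike in the setting of Section~\ref{sec-gen}, the map $f$ need not preserve $\R^n$: rows of the matrix $D$ in~\eqref{defCD} corresponding to rows of $A$ with a single non-$\zero$ entry may be identically $-\infty$, so one cannot invoke Kohlberg's Theorem~\ref{th-kohlberg} directly to produce a finite invariant half-line. This is precisely why Lemma~\ref{lemma-allequal} is phrased for self-maps of $(\Rm)^n$ with possibly infinite eigenvector coordinates, and why the Nussbaum-type Collatz--Wielandt property, rather than Kohlberg's theorem, is the correct tool here.
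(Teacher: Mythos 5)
Your proposal is correct and follows essentially the same route as the paper: Theorem~\ref{th:u<f(u)} identifies tropical independence with condition~(\ref{th:4equiv-4}), and the Collatz--Wielandt identities of Lemma~\ref{lemma-allequal} (applicable because Assumption~\ref{assumpacol} guarantees no column of $C$ is identically $-\infty$, even though $f$ need not preserve $\R^n$) give the equivalence of~(\ref{th:4equiv-2}), (\ref{th:4equiv-3}) and~(\ref{th:4equiv-4}) via $\cw'(f)=\rho(f)=\cw(f)=\bar\chi(f)$. Your write-up merely spells out a few steps the paper leaves implicit, such as passing from $\cw'(f)\geq 0$ to a $\rho(f)$-eigenvector.
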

\begin{proof}
By Definition~\ref{def:trop-dependent-in-T} and Theorem~\ref{th:u<f(u)}, 
the columns of $A$ are tropically dependent
if and only if there exists a vector $u\in (\Rm)^n$ non-identically $-\infty$ 
such that $u\leq f(u)$.
This shows that Property~\eqref{th:4equiv-4} is equivalent
to the tropical linear independence of the columns of $A$.
By definition, \eqref{th:4equiv-3} is equivalent to $\cw(f)<0$ and
\eqref{th:4equiv-4} is equivalent to $\cw'(f)\not\geq 0$.
From Assumption~\ref{assumpacol}, the matrix $C$ of~\eqref{defCD} has no 
column identically equal to $-\infty$, hence 
the equivalence between~\eqref{th:4equiv-2}, \eqref{th:4equiv-3}
and~\eqref{th:4equiv-4} follows from Lemma~\ref{lemma-allequal}.
\end{proof}

\subsection{Characterizations of the tropical rank}
We shall now derive Theorem~\ref{theorem2} and related results concerning rank of matrices in the more general framework of matrices with entries in $\Ti$.
We shall need the following tropical Cramer theorem proved in~\cite{AGG08}
which is analogous to the Cramer theorem of M. Plus~\cite{Plus}, the symmetrized
max-plus semiring being now replaced by the extended tropical semiring.
This is a refinement of a result concerning
the tropical Cramer formula  %
 stated by Richter-Gebert, Sturmfels, and
Theobald in~\cite{RGST}, which deals with a generic case.
\begin{theorem}[{\cite[Theorem 6.6]{AGG08}}]\label{th-cramerI}
Let $A\in {\mathcal{M}}_n(\Ti)$ and $b\in \Ti^n$, then 
\begin{enumerate}
\item Every real solution $x$ of the linear system
\[%
Ax\balance b \]%
satisfies the relation
$ (\detp A) x \balance A\adj  b$.
\item \label{th-cramerI-point2}
Moreover, if the vector $A\adj  b$ is real and $\detp A$ is invertible in $\Ti$, then 
$\hat x:={\detp A}^{-1} A\adj b$
is the unique real solution of $Ax\balance b$.
\end{enumerate}
Here $A\adj$ is defined by $(A\adj)_{ji}=\per A (i|j)$, where $A(i|j)$ is the matrix obtained from $A$ by deleting $j$-th column and $i$-th row.
\end{theorem}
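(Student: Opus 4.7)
The strategy mirrors the classical proof of Cramer's rule, with the ring identity $A\adj A=(\det A)I$ replaced by its tropical counterpart over the extended semiring $\Ti$. First I would establish this \emph{tropical adjugate identity}: $A\adj A=(\detp A)I_n\oplus N$, where $N\in(\Ti^\circ)^{n\times n}$ has zero diagonal (and symmetrically $A\,A\adj=(\detp A)I_n\oplus N'$). The diagonal entry $(A\adj A)_{ii}=\bigoplus_k\per A(k|i)\cdot A_{ki}$ is literally the Laplace expansion of $\per A$ along the $i$-th column, hence equals $\detp A$. For $i\neq j$, $(A\adj A)_{ij}=\bigoplus_k\per A(k|i)\cdot A_{kj}$ is the permanent of the matrix $A^{(i\leftarrow j)}$ obtained from $A$ by substituting its $j$-th column for its $i$-th column; this matrix has two equal columns, so the fixed-point-free involution $\sigma\mapsto \sigma\circ(i,j)$ on $\allperm_n$ pairs each permutation with a distinct one yielding the same monomial, whence every monomial occurs with multiplicity $\geq 2$ in $\N_2$ and the entry lies in $\Ti^\circ$.

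Turning to part~(1), the hypothesis $Ax\balance b$ reads $Ax\oplus b\in(\Ti^\circ)^n$. Left-multiplying by $A\adj$ preserves this relation because $\Ti^\circ$ is a semiring ideal of $\Ti$, giving $(A\adj A)x\oplus A\adj b\in(\Ti^\circ)^n$. Substituting the adjugate identity yields, entry by entry,
\[
(\detp A)\,x_i\oplus H_i\oplus (A\adj b)_i\in\Ti^\circ,\qquad
H_i:=\bigoplus_{j\neq i}(A\adj A)_{ij}\,x_j\in\Ti^\circ.
\]
To derive the sharper balance $(\detp A)x_i\balance (A\adj b)_i$ one has to absorb the ghost $H_i$. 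This can be done either via multilinearity of $\per$ along the $i$-th column of the Cramer matrix $C_i:=A^{(i\leftarrow b)}$ (so that $\per C_i=(A\adj b)_i$ by Laplace expansion) combined with the entrywise balance $b\balance Ax$, or by a projection-level analysis using that $Ax\balance b$ forces $\proj\bigl((A\adj b)_i\bigr)$ to lie in a range where the ``real-dominant'' configurations that would violate the target balance cannot arise.

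For part~(2), set $\hat x:=(\detp A)^{-1}A\adj b$, which is a well-defined real vector because $\detp A\in\Ti^\vee\setminus\{\zero\}$ is invertible and $A\adj b$ is assumed real. Existence: by the right-hand adjugate identity, $A\hat x=(\detp A)^{-1}A\,A\adj b=b\oplus (\detp A)^{-1}N'b$, whose second summand is ghost, so $A\hat x\balance b$. Uniqueness: any real solution $x$ satisfies $(\detp A)x\balance A\adj b$ by part~(1); both sides are entrywise real (since $(\detp A)x$ is a product of real elements and $A\adj b$ is real by hypothesis), and between real elements of $\Ti$ the balance relation coincides with equality (straightforward case analysis on the projections). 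Hence $(\detp A)x=A\adj b$, and dividing by $\detp A$ yields $x=\hat x$.

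The main obstacle is the ghost-absorption step in part~(1): the relation $\balance$ is not transitive, and $H_i\in\Ti^\circ$ cannot be cancelled by a purely abstract semiring manipulation. The argument therefore relies either on a \emph{transfer principle} in the style of~\cite{Plus,AGG08}, lifting the polynomial identity $(\det A)x_i=(A\adj b)_i$ (valid modulo $Ax-b=0$ in any commutative ring) to a balance relation over $\Ti$, or on a careful combinatorial/projection analysis exploiting how $b$ and $x$ are coupled through the hypothesis $Ax\balance b$.
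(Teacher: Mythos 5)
First, note that the paper does not prove this statement: it is quoted verbatim from \cite[Theorem~6.6]{AGG08}, so there is no in-paper argument to compare yours against. Judged on its own terms, your proposal gets the architecture right, and several pieces are complete and correct: the tropical adjugate identity $A\adj A=(\per A)I\oplus N$ with ghost off-diagonal entries (Laplace expansion on the diagonal, the transposition involution pairing permutations off the diagonal), the existence half of part~(2) (since $A\hat x\oplus b=b^\circ\oplus(\per A)^{-1}N'b\in(\Ti^\circ)^n$), and the reduction of uniqueness in part~(2) to part~(1) via the observation that balance between real elements is equality.

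The genuine gap is part~(1) itself, which you correctly locate but do not close. Left-multiplying $Ax\oplus b\in(\Ti^\circ)^n$ by $A\adj$ only yields $(\per A)x_i\oplus H_i\oplus(A\adj b)_i\in\Ti^\circ$ with the parasitic ghost $H_i=\bigoplus_{j\neq i}(A\adj A)_{ij}\,x_j$, and since $\balance$ is not transitive and a dominant ghost makes any sum balanced, $H_i$ cannot simply be deleted (e.g.\ $5^\circ$ absorbs $3^\vee\oplus 2^\vee$ even though $3^\vee\notbalance 2^\vee$). Neither of your two proposed escape routes is carried out, and the first one as stated does not obviously help: expanding $\per$ of the Cramer matrix $A^{(i\leftarrow b)}$ multilinearly and substituting the balance $b\balance Ax$ reintroduces cross terms that are ghosts of exactly the same shape as $H_i$, so the difficulty recurs rather than disappears. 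The known proofs (M.~Plus's for the symmetrized semiring, adapted to $\Ti$ in \cite{AGG08}) handle this by an elimination/induction argument on $n$ resting on a weak substitution lemma for balances with real (invertible) unknowns; some such combinatorial input is indispensable, and without it part~(1) --- and hence the uniqueness half of part~(2), which relies on it --- remains unproved.
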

As a consequence of the game formulation and of the latter theorem, we
obtain the following result.
\begin{theorem}\label{th-main}
Let $A\in \cM_{m,n}(\Ti)$ with $m\geq n$. Then, the columns of $A$ are tropically linearly
independent if and only if $A$ has an $n \times n$ submatrix that is tropically nonsingular.
\end{theorem}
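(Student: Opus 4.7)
The plan is to use Theorem~\ref{th:4equiv} as the bridge between tropical independence and the game condition $\bar\chi(f)<0$, and to combine this with the tropical Cramer theorem (Theorem~\ref{th-cramerI}) for the easy direction and with Hall's marriage theorem for the hard one. I will take the square case $m=n$ of the statement as a black box.

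The ``if'' direction is immediate. If $A$ has a tropically nonsingular $n\times n$ submatrix $A'$, then $\per A'$ is invertible, so part~(\ref{th-cramerI-point2}) of Theorem~\ref{th-cramerI} applied with $b=\zero$ forces the unique real solution of $A'x\balance\zero$ to be $x=\zero$. Since any witness of a tropical dependence among the columns of $A$ would also satisfy $A'x\balance\zero$, the columns of $A$ must be tropically independent.

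For the ``only if'' direction, assume the columns of $A$ are tropically independent. By Theorem~\ref{th:4equiv} there exist $w\in\R^n$ and $\lambda<0$ with $f(w)\leq\lambda+w$. For each $j\in[n]$ define
\[
S(j):=\{i\in[m]\mid (i,j)\in E\text{ and }\max_{k\neq j}(B_{ik}+w_k)\leq B_{ij}+w_j+\lambda\}\enspace ,
\]
which is non-empty by the inequality. If we can choose an injective $\pi\colon[n]\to[m]$ with $\pi(j)\in S(j)$, then the $n\times n$ submatrix $A'$ whose $j$-th row is the $\pi(j)$-th row of $A$ is tropically nonsingular. Indeed, the identity permutation contributes the real product $\prod_j A_{\pi(j),j}$ with projection $\sum_j B_{\pi(j),j}$; for any other permutation $\sigma$, at least one cycle $(j_1,\ldots,j_\ell)$ of $\sigma$ has length $\ell\geq 2$, and summing the inequality defining $S(j_k)$ applied to the shift $j_{k+1}\neq j_k$ around the cycle gives $\sum_k(B_{\pi(j_k),j_{k+1}}-B_{\pi(j_k),j_k})\leq \ell\lambda<0$, so the $\sigma$-product has strictly smaller projection than the identity's. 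Hence the maximum in $\per A'$ is uniquely attained by the identity, so $\per A'\in\Ti^\vee$ is invertible.

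The main obstacle is producing the injective $\pi$, which I propose to do via Hall's marriage theorem: it suffices to verify $|N(J)|\geq|J|$ for every $J\subseteq[n]$, where $N(J):=\bigcup_{j\in J}S(j)$. Suppose toward a contradiction this fails for some $J$, and set $I_0:=N(J)$, so $|I_0|<|J|$. The restriction $\tilde w$ of $w$ to $J$ satisfies $\tilde f(\tilde w)\leq\lambda+\tilde w$, where $\tilde f$ is the dynamic programming operator associated with the submatrix $A_0:=A[I_0,J]$: for each $j\in J$ any $i\in S(j)\subseteq I_0$ is a valid witness, since the restricted maximum over $J\setminus\{j\}$ only shrinks with respect to the one over $[n]\setminus\{j\}$. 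Hence by Theorem~\ref{th:4equiv}, the columns of $A_0$ are tropically independent, contradicting the auxiliary statement that any matrix with strictly more columns than rows has tropically dependent columns. The latter is proved by padding the matrix with duplicates of its first row to form a square matrix with two equal rows, observing that pairing each permutation with its composition by the transposition of these rows shows the permanent lies in $\Ti^\circ$, and invoking the square case of the theorem to obtain a nontrivial $x\in(\Ti^\vee)^{|J|}\setminus\{\zero\}$ balancing the columns of the padded matrix, which then balances the columns of $A_0$ itself.
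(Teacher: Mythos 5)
Your proof is correct, and its skeleton coincides with the paper's: the ``if'' direction goes through the tropical Cramer theorem (Theorem~\ref{th-cramerI}, the paper uses part (1) where you use part (2), but the effect is the same), and the ``only if'' direction extracts, from the strict subsolution $f(w)\leq\lambda+w$ with $\lambda<0$ supplied by Theorem~\ref{th:4equiv}, one row $\pi(j)\in S(j)$ per column and then checks via the cycle cancellation that the identity is the unique optimal permutation of the resulting square submatrix, so that its permanent is invertible. The one genuine divergence is your route to the injectivity of $\pi$, and there you have made the argument far heavier than necessary: the sets $S(j)$ are automatically \emph{pairwise disjoint}. If $i\in S(j')\cap S(j'')$ with $j'\neq j''$, the defining inequality of $S(j')$ instantiated at $k=j''$ and that of $S(j'')$ instantiated at $k=j'$ sum to $0\leq 2\lambda<0$ --- precisely the telescoping you already exploit around a cycle of length at least $2$, and precisely how the paper proves its selection $\sigma$ is injective. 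Hence any system of representatives of the $S(j)$ is injective, Hall's marriage theorem is vacuous here, and with it disappears the entire detour through the restricted game on $A[N(J),J]$, the Radon-type padding lemma, and, most importantly, the square case of the theorem that you import as a black box. That detour is logically sound as far as I can check (the restricted operator does satisfy $\tilde f(\tilde w)\leq\lambda+\tilde w$, the restricted matrix does satisfy Assumption~\ref{assumpacol}, and the two-equal-rows pairing does put the permanent of the padded matrix in $\Ti^\circ$), but as written your argument only reduces the rectangular case to the square case --- in the spirit of the paper's alternative proof via the tropical Helly theorem --- whereas the paper's proof, and yours once the disjointness observation replaces Hall, is self-contained. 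One small omission either way: before invoking Theorem~\ref{th:4equiv} for $A$ itself you should note, as the paper does, that tropical independence of the columns forces Assumption~\ref{assumpacol}.
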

Izhakian and Rowen obtained independently the same result in a recent work~\cite{IRowen}, by a different method.
\begin{proof}
The ``if'' part of the assertion was already shown in~\cite[Lemma 8.1]{AGG08}
using Theorem~\ref{th-cramerI}. We reproduce the proof for completeness.
Assume that $A$ has a tropically nonsingular submatrix of maximal size,
denote it by $F$, and assume by contradiction
that $Ax\balance \zero$ for some real non-zero vector $x$. Then,
$Fx\balance \zero$. Theorem~\ref{th-cramerI} implies that $(\per F) x\balance F\adj \zero$. Since $F\adj \zero=\zero$ it follows that $(\per F) x\balance \zero $.
Since at least one coordinate of $x$ is non-zero, 
and $x$ is real, this coordinate is invertible, hence $\per F\balance \zero$, 
which contradicts the assumption.

Let us show the ``only if'' part.
Assume that the columns of $A$ are tropically linearly independent.
This implies in particular that no column of $A$ consists of elements of 
 $\Ti^\circ$, that is Assumption~\ref{assumpacol} is satisfied.
Then, by Theorem~\ref{th:4equiv}, $f$ satisfies Condition~\eqref{th:4equiv-3} 
of Theorem~\ref{th:4equivmaxplus}, that is there exists a vector $u\in \R^n$
and a scalar $\lambda<0$ such that
\[
\min_{i\in[\nrows], \; (i,j)\in E} (-\proj A_{ij}+\max_{k\in[\ncols],\; k\neq j} ( \proj A_{ik} + u_k)) \leq \lambda +u_j ,\;\; j\in[\ncols] \enspace .
\]
Hence, for every $j\in[\ncols]$, we can find an index $\sigma(j)$ which attains the minimum, and so $(\sigma(j),j)\in E$ and
\[
-\proj A_{\sigma(j)j}-u_j + \max_{k\in[\ncols],\; k\neq j} (\proj A_{\sigma(j)k} + u_k )\leq \lambda \enspace .
\]
Let 
$G_{ij}:= \proj A_{ij}+u_j$. 
The latter inequality can be rewritten as
\[
G_{\sigma(j)k}- G_{\sigma(j)j} \leq \lambda \enspace ,\quad j\in[\ncols],\;
\quad k\in[\ncols],\; k\neq j \enspace .
\]
We claim that $\sigma$ is injective. Indeed, assume by contradiction
that $i=\sigma(j')=\sigma(j'')$ for some $j'\neq j''$. Then, by selecting $k=j'$ and $j=j''$ in the previous inequality, we get
\[
G_{ij'}-G_{ij''}\leq \lambda \enspace .
\]
and by selecting $k=j''$ and $j=j'$ we get symmetrically, 
\[
G_{ij''}-G_{ij'}\leq \lambda \enspace.
\]
Summing these inequalities, we get $0\leq 2 \lambda<0$, which is nonsense.

Let $I:=\set{\sigma(j)}{j\in[\ncols]}$. We get that the submatrix 
consisting of the rows of $G$ with indices in $I$ is such that the maximum of row $\sigma(j)$
is attained only at column $j$. It follows that $\sigma^{-1}$ determines
the unique optimal solution of the optimal assignment problem
corresponding to this submatrix, that is
\[ \sum_{i\in I} G_{i\sigma^{-1}(i)}>\sum_{i\in I} G_{i\tau(i)}\]
for 
all
 bijective maps $\tau:I\to[\ncols]$.
Hence the same holds for the matrix $B=\proj A$ instead of $G$.
Since $(i,\sigma^{-1}(i))\in E$ for all $i\in I$, the submatrix $F$ 
consisting of the rows of $A$ with indices in $I$ satisfies 
$\per F=\per (\pi F)^\vee \in  \Ti^\vee\setminus\{\zero\}$, that is, $F$ is tropically nonsingular.
\end{proof}

The classical Radon theorem shows that $m+1$ vectors in dimension $m$
can be partitioned in two subsets that generate two convex cones
with a non-zero intersection. Tropical versions of this result
have appeared in~\cite{But,BriecHorvath04,AG,GM08,AGG08}. 
The following result is another Radon analogue, this times
in the semiring $\Ti$. 
The proof uses the same method as the one of~\cite{AGG08}.
\begin{corollary}\label{n+1dep}
Any $m+1$ vectors of $\Ti^m$ are tropically linearly dependent.
\end{corollary}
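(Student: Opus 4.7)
The plan is to reduce the statement to the injectivity argument that already appears inside the proof of Theorem~\ref{th-main}, and then to derive a contradiction by pigeonhole. Concretely, I would let $A \in \cM_{m,m+1}(\Ti)$ be the matrix whose columns are the given $m+1$ vectors. If some column of $A$ consists only of elements of $\Ti^\circ$, then $A\,\inj{e_j} \in (\Ti^\circ)^m$ where $e_j$ is the $j$th unit vector, which immediately gives a nontrivial linear dependence, so I may assume Assumption~\ref{assumpacol} holds.

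Now I would argue by contradiction: suppose the $m+1$ columns are tropically linearly independent. Then Theorem~\ref{th:4equiv} applies and, in particular, condition~\eqref{th:4equiv-3} of Theorem~\ref{th:4equivmaxplus} holds: there exist $w\in \R^{m+1}$ and $\lambda<0$ such that $f(w)\leq \lambda+w$, where $f$ is built from $A$ as in~\eqref{fandB}. Writing this inequality out coordinatewise, I can choose, for every $j\in[m+1]$, an index $\sigma(j)\in [m]$ attaining the minimum in~\eqref{def-f-dep}, which yields
\[
G_{\sigma(j),k}-G_{\sigma(j),j} \leq \lambda, \qquad \forall k\in [m+1],\; k\neq j,
\]
where $G_{ij}:=\proj A_{ij}+w_j$. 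This is exactly the inequality that drives the proof of Theorem~\ref{th-main}.

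The same two-line computation as in that proof then forces $\sigma$ to be injective: if $\sigma(j')=\sigma(j'')=i$ for $j'\neq j''$, specializing the above inequality to $(j,k)=(j'',j')$ and $(j,k)=(j',j'')$ and adding gives $0\leq 2\lambda<0$, which is absurd. So $\sigma:[m+1]\to [m]$ is injective, which is impossible by the pigeonhole principle since $|[m+1]|>|[m]|$. This contradicts the assumption of independence, and therefore the $m+1$ columns of $A$ must be tropically linearly dependent.

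There is no real obstacle here beyond reusing the injectivity step of Theorem~\ref{th-main}; the only subtlety to check is that this step only uses the existence of the Collatz--Wielandt certificate $(w,\lambda)$ and makes no use of the inequality $m\geq n$ in the hypothesis of Theorem~\ref{th-main}, so the conclusion continues to hold in the ``rectangular'' regime $n=m+1>m$ where the pigeonhole principle produces the contradiction for free.
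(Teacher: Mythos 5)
Your proof is correct, but it takes a genuinely different route from the one in the paper. The paper's proof splits into two cases on the maximal square submatrices of the $m\times(m+1)$ matrix: if some $m\times m$ submatrix is tropically singular, the already-established Theorem~\ref{th-main} (applied in the square case) yields a dependence among its columns, hence among all $m+1$ vectors; otherwise every maximal minor is invertible, and the tropical Cramer theorem (Theorem~\ref{th-cramerI}) is invoked to produce an explicit real solution $\hat x=(\detp A)^{-1}A\adj b$ of $Ax\balance b$, which, completed by a last coordinate equal to $\unit$, is an explicit witness of dependence. You instead bypass the Cramer machinery entirely: after disposing of the degenerate case of a column contained in $\Ti^\circ$, you apply Theorem~\ref{th:4equiv} directly to the $m\times(m+1)$ matrix and rerun the injectivity step from the proof of Theorem~\ref{th-main}, observing that an injection $\sigma:[m+1]\to[m]$ is already absurd by pigeonhole. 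Your key observation --- that neither Theorem~\ref{th:4equiv} nor the construction of $\sigma$ from the Collatz--Wielandt certificate $(w,\lambda)$ uses the hypothesis $m\geq n$, and that the only place where a well-defined subtraction of entries of $G$ is needed is for pairs lying in $E$ --- is accurate. The trade-off is that your argument is shorter and reuses only the internals of the proof of Theorem~\ref{th-main} (in particular it does not need Theorem~\ref{th-cramerI}), whereas the paper's version is constructive, exhibiting the dependence relation explicitly via the Cramer formula, and treats Theorem~\ref{th-main} purely as a black box.
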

\begin{proof}
Consider $v_1,\ldots , v_{m+1}$ in $\Ti^m$ and denote by $B$ the
$m\times (m+1)$ matrix with columns $v_j$, $j\in[m+1]$.
If one of the $m\times m$ submatrices of $B$ is tropically singular, 
Theorem~\ref{th-main} shows that the columns of this submatrix are 
tropically linearly 
dependent, hence the $m+1$ vectors $v_1,\ldots, v_{m+1}$ are also 
tropically linearly dependent.
Otherwise, if $A$ is the square submatrix obtained from $B$ by deleting 
the last column, and $b=v_{m+1}$, we get that $\detp A$ is invertible in
$\Ti$ and that the vector
$\hat x:={\detp A}^{-1} A\adj b$  has all its entries in $\Ti^\vee$
(and non zero).
Hence, by Point~\eqref{th-cramerI-point2} of Theorem~\ref{th-cramerI},
$\hat x$ is a solution of $Ax\balance b$.
Denoting by $y$ the $m+1$ dimensional vector 
obtained by completing $x$ with an $m +1$-entry equal to $\unit$, we get that
$By\in \Ti^\circ$. Since $y$ is a non-zero real vector, this 
shows that the columns of $B$ are tropically linearly dependent.
\end{proof}
\begin{corollary}\label{rank-equiv}
  Let $A\in \cM_{m,n}(\Ti)$. Then, the maximal number of tropically linearly independent rows of $A$, the maximal number of tropically linearly independent columns of $A$, and the maximal size %
  of a tropically nonsingular submatrix of $A$
coincide.
\end{corollary}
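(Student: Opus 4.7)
The plan is to prove $r_c = r_r = s$, where $r_c$ denotes the maximal number of tropically linearly independent columns, $r_r$ the maximal number of tropically linearly independent rows, and $s$ the maximal size of a tropically nonsingular square submatrix. I would first establish $r_c = s$ directly from Theorem~\ref{th-main}, and then derive $r_r = s$ by transposition, since $\per(M^T) = \per(M)$ (a consequence of reindexing via $\sigma\mapsto \sigma^{-1}$), so that $s(A) = s(A^T)$, while by definition $r_r(A) = r_c(A^T)$.

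For $r_c \leq s$: by Corollary~\ref{n+1dep}, $r_c \leq m$, so I can pick a set $I$ of $r_c$ tropically linearly independent columns and form the $m\times r_c$ submatrix $A_I$. Since $m\geq r_c$, Theorem~\ref{th-main} applies and yields a tropically nonsingular $r_c\times r_c$ submatrix of $A_I$, which is also a submatrix of $A$, giving $s\geq r_c$. For $s\leq r_c$: let $F$ be a tropically nonsingular $s\times s$ submatrix with row set $J$ and column set $I$. Applied to $F$ itself (in the square case), Theorem~\ref{th-main} gives that the columns of $F$ are tropically linearly independent. The key observation is that tropical linear dependence is preserved by restricting to a subset of rows: if some non-zero $x \in (\Ti^\vee)^I$ satisfied $A_I x \balance \zero$, then extracting the rows indexed by $J$ would yield $F x \balance \zero$, contradicting the independence of the columns of $F$. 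Hence the $r_c$ columns of $A$ indexed by $I$ are tropically linearly independent, so $r_c \geq s$.

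I do not expect a serious obstacle here: the only non-trivial ingredient is Theorem~\ref{th-main}, which has already been established, and the monotonicity of tropical dependence under restriction of rows is immediate from Definition~\ref{def:trop-dependent-in-T}. The one mild subtlety worth checking carefully is that Theorem~\ref{th-main} requires $m\geq n$ in its hypothesis, so one must invoke Corollary~\ref{n+1dep} to ensure $r_c\leq m$ (respectively $r_r\leq n$ when running the argument on $A^T$) before applying it. Once this is in place, the argument for rows is a one-line application of the symmetry $s(A)=s(A^T)$ and $r_r(A)=r_c(A^T)$, yielding the three-way equality.
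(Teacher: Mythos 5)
Your proof is correct and follows essentially the same route as the paper's: both inequalities between $r_c$ and $s$ come from Theorem~\ref{th-main} (with Corollary~\ref{n+1dep} supplying $r_c\leq m$ so that the theorem applies), and rows are handled by transposition. The only cosmetic difference is that for $s\leq r_c$ the paper applies the rectangular ``if'' part of Theorem~\ref{th-main} directly to the $m\times s$ column-submatrix containing $F$, whereas you apply the square case to $F$ and then lift via the (correct and immediate) observation that tropical dependence restricts to subsets of rows.
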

\begin{proof}
Assume that $A$ has a tropically nonsingular submatrix %
of size $k$.
Let $F$ denote the submatrix of $A$ 
formed by
the columns
of $A$ corresponding to the columns of this submatrix. It follows from
the ``if'' part of Theorem~\ref{th-main} that the columns of $F$
are tropically linearly independent. Hence, $A$ contains
at least $k$ tropically independent columns,
which shows that the maximal number of tropically linearly independent columns of $A$ is greater than or equal to the maximal size 
of a tropically nonsingular submatrix of $A$.

Conversely, assume that $A$ has $k$ 
tropically independent columns. Then Corollary~\ref{n+1dep} shows that
$k\leq m$. Let $F$ denote the submatrix
of $A$ consisting of these columns. By the ``only if'' part of
Theorem~\ref{th-main},
we can find a $k\times k$ submatrix of $F$ which is tropically nonsingular,
which shows the reverse inequality, thus the
equality between  the maximal number of tropically linearly independent columns of $A$ and the maximal size of a tropically nonsingular submatrix of $A$.

Replacing $A$ by its transpose matrix $A^t$, we obtain the same result for rows instead of columns, which finishes the proof of the corollary.
\end{proof}

We next give several corollaries of these results for $\rmax$. 
Till the end of this section we
shall consider tropical linear dependence and tropical nonsingularity in $\rmax$, i.e., in the sense given in the introduction.

\begin{corollary}\label{th:main}
Let $A\in \cM_{m,n}(\rmax)$ with $m\geq n$. Then, the columns of $A$ are tropically linearly
independent if and only if $A$ has an $n \times n$ submatrix, which is tropically nonsingular.
\end{corollary}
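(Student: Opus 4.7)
The plan is to deduce this directly from Theorem~\ref{th-main} by embedding $A$ into $\cM_{m,n}(\Ti)$ via the canonical injection $\iota:\rmax\to\Ti$, $b\mapsto b^\vee$, applied entrywise. Let $A^\vee$ denote the resulting matrix. The work reduces to matching up the two notions of tropical linear independence and the two notions of tropical nonsingularity across this embedding; once that is done, the corollary is immediate.

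First, I would verify the independence correspondence. Since $\Ti^\vee=\{b^\vee : b\in\rmax\}$, the non-trivial vectors of $(\Ti^\vee)^n$ are exactly the images $\iota(x)$ of vectors $x\in\rmax^n$ not identically $-\infty$. Expanding the definition of $\oplus$ in $\Ti$, the $i$-th entry of $A^\vee \iota(x)$ lies in $\Ti^\circ$ precisely when either the maximum in $\max_{j\in[n]}(A_{ij}+x_j)$ is attained by at least two indices (so the $\N_2$-component of the sum is $\geq 2$, yielding a ghost element) or this maximum equals $-\infty$ (yielding $\zero$). This is exactly the condition in Problem~\ref{pb-indep}. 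Hence the columns of $A$ are tropically independent over $\rmax$ iff the columns of $A^\vee$ are tropically independent in the sense of Definition~\ref{def:trop-dependent-in-T}.

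Next I would check the nonsingularity correspondence. For an $n\times n$ submatrix $F$ of $A$, the corresponding submatrix of $A^\vee$ is $F^\vee$, and from the formula in Definition~\ref{de:detS} combined with the observation that the injection $\iota$ is a multiplicative morphism, one sees that $\proj(\per F^\vee)$ equals the optimal assignment value $\max_\sigma \sum_i F_{i\sigma(i)}$, while the type of $\per F^\vee$ is real iff this maximum is either $-\infty$ or attained by a unique permutation. Thus $\per F^\vee$ is invertible in $\Ti$ iff $F$ is tropically nonsingular in the $\rmax$ sense of Section~\ref{sec1.4} (finite optimal value attained by exactly one permutation), i.e., iff $F^\vee$ is tropically nonsingular in the sense of Definition~\ref{def:trop-nonsingular-in-T}.

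With these two identifications, applying Theorem~\ref{th-main} to $A^\vee$ yields exactly the statement of the corollary. There is no substantive obstacle; the only care required is in checking that the two dictionaries align, which is routine given the way the types and the balance relation were set up in Section~\ref{sec-linindep}.
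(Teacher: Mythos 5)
Your proposal is correct and matches the paper's route exactly: the paper derives Corollary~\ref{th:main} from Theorem~\ref{th-main} via the canonical injection $b\mapsto b^\vee$, having already recorded (in the remarks following Definitions~\ref{def:trop-dependent-in-T} and~\ref{de:detS}) precisely the two dictionary facts you verify, namely that $Ax^\vee\balance\zero$ encodes the $\rmax$ notion of dependence and that $\per F^\vee$ is invertible iff $F$ is tropically nonsingular over $\rmax$. Nothing further is needed.
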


Recall that the {\em tropical rank} of a matrix $A\in \cM_{m,n}(\rmax)$
is defined as the maximal size of a tropically non-singular submatrix.
In~\cite{AGG08}, we also defined the \NEW{maximal row (resp.\ column) rank} 
of a matrix $A$ with entries in $\rmax$ as 
the maximal number of tropically linearly independent rows (resp.\ columns)
of $A$.
We get as an immediate corollary of Corollary~\ref{rank-equiv}
the equivalence between all these rank notions.

\begin{corollary}
  Let $A\in \cM_{m,n}(\rmax)$. Then, the maximal row rank of $A$,
 the maximal column rank of $A$ and the tropical rank of $A$ coincide.
\end{corollary}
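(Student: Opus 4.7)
The plan is to deduce this statement directly from Corollary~\ref{rank-equiv}, which is its analogue over the extended tropical semiring $\Ti$, by transferring via the canonical injection $b \mapsto b^\vee$ of $\rmax$ into $\Ti$.

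First, I would fix $B \in \cM_{m,n}(\rmax)$ and consider the matrix $A \in \cM_{m,n}(\Ti)$ defined entrywise by $A_{ij} := B_{ij}^\vee$. I would then check, as was already observed in the discussion following Definition~\ref{def:trop-dependent-in-T}, that the columns of $B$ are tropically linearly dependent over $\rmax$ (in the sense of Problem~\ref{pb-indep}) if and only if the columns of $A$ are tropically linearly dependent over $\Ti$ (in the sense of Definition~\ref{def:trop-dependent-in-T}); indeed, if $y \in \rmax^n$ and $x = y^\vee$, the balance $Ax \balance \zero$ translates precisely into the condition that, for every row $i$, the maximum in $\max_j (B_{ij} + y_j)$ is attained at least twice or is equal to $-\infty$. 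Similarly, as was observed after Definition~\ref{def:trop-nonsingular-in-T}, if $F$ is a square submatrix of $B$ and $F' := F^\vee$ the corresponding submatrix of $A$, then $F$ is tropically nonsingular in the sense of Section~\ref{sec1.4} if and only if $\per F' \in \Ti^\vee \setminus\{\zero\}$, that is, if and only if $F'$ is tropically nonsingular in the sense of Definition~\ref{def:trop-nonsingular-in-T}.

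From these two dictionary items I would conclude that the maximal column rank of $B$ over $\rmax$ equals the maximal number of tropically linearly independent columns of $A$ over $\Ti$, the maximal row rank of $B$ equals the maximal number of tropically linearly independent rows of $A$, and the tropical rank of $B$ equals the maximal size of a tropically nonsingular submatrix of $A$. Applying Corollary~\ref{rank-equiv} to $A$ yields the equality of these three quantities, proving the corollary.

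I do not expect any genuine obstacle here: the entire content of the statement has already been established in the $\Ti$-setting, and the only work is to verify that the two basic notions (dependence and nonsingularity) are preserved under the injection $\rmax \hookrightarrow \Ti$. The only point worth being careful about is that the injection $b \mapsto b^\vee$ is not a semiring morphism, but for the purposes of expressing dependence and nonsingularity one only needs its compatibility with the permanent on each fixed monomial and the behaviour of $\oplus$ on real elements, both of which follow from the defining formulas of $\oplus$ and $\odot$ in $\Ti$.
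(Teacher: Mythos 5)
Your proposal is correct and follows essentially the same route as the paper, which derives this statement as an immediate consequence of Corollary~\ref{rank-equiv} via the canonical injection $b\mapsto b^\vee$ of $\rmax$ into $\Ti$; the two dictionary items you verify (preservation of linear dependence and of nonsingularity under the injection) are exactly the facts the paper records after Definitions~\ref{def:trop-dependent-in-T} and~\ref{de:detS}. Your added care about the injection not being a semiring morphism is a sensible precaution but, as you note, harmless here.
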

\begin{corollary}\label{cor-trivial}
Checking whether a matrix $A\in \cM_{m,n}(\rmax)$, with $m\geq n$, has
tropical rank at least $n-k$, reduces to solving ${ n\choose k}$ mean
payoff game problems associated to $m\times (n-k)$ matrices,
and can therefore be done in pseudo-polynomial time
for a fixed value of $k$.
\end{corollary}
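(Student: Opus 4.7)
The plan is straightforward once the previous results are in hand. The key observation is that, by the corollary immediately preceding this one, the tropical rank of $A$ (defined as the maximal size of a tropically nonsingular submatrix) coincides with the maximal column rank, namely the maximal number of tropically linearly independent columns. So the condition ``$A$ has tropical rank at least $n-k$'' is equivalent to the existence of some subset $J\subset [n]$ of size $n-k$ such that the columns of $A$ indexed by $J$ are tropically linearly independent.

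First I would enumerate the $\binom{n}{n-k}=\binom{n}{k}$ subsets $J\subset[n]$ of cardinality $n-k$, and for each, form the $m\times(n-k)$ submatrix $A_J$. Then I would invoke Theorem~\ref{th:4equivmaxplus}: the columns of $A_J$ are tropically linearly independent if and only if the associated mean payoff game (with dynamic programming operator $f_J$ built from $A_J$ as in~\eqref{def-f-dep}) has no winning state for Player Max, i.e.\ $\bar\chi(f_J)<0$. This gives the promised reduction to $\binom{n}{k}$ mean payoff game problems, each one attached to an $m\times(n-k)$ matrix.

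For the complexity claim, I would appeal to the fact, recalled at the start of Section~3.2, that a mean payoff game with integer weights can be solved in pseudo-polynomial time (for instance by the Zwick--Paterson value iteration, or by the refined power algorithm developed there, whose termination bound $K=2(n+m)^2 M+1$ is pseudo-polynomial in the size of the input and the magnitude $M$ of the entries). Since $\binom{n}{k}$ is a polynomial in $n$ of degree $k$ for fixed $k$, the total running time of the procedure ``enumerate $J$ and run the mean payoff game solver on $A_J$'' is pseudo-polynomial.

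There is really no substantive obstacle: all of the work has been done in Theorem~\ref{th:4equivmaxplus} and in the preceding corollary identifying tropical rank with column rank. The only ``choice'' in the proof is which pseudo-polynomial mean payoff game algorithm to cite, and the arithmetic that ${n\choose k}$ is polynomial in $n$ for fixed $k$; I would simply state these two facts and conclude.
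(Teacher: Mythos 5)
Your proposal is correct and follows essentially the same route as the paper: enumerate the $\binom{n}{k}$ column subsets of size $n-k$, use the equivalence of tropical rank with maximal column rank to reduce to testing tropical independence of each subset, and invoke Theorem~\ref{th:4equivmaxplus} to turn each test into a mean payoff game solvable in pseudo-polynomial time. The paper's proof is just a terser version of the same argument, leaving the rank-equivalence step and the complexity bookkeeping implicit.
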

\begin{proof}
It suffices to check, for every subset $I$ of $[n]$ of cardinality
$n-k$, whether the columns in $I$ of $A$ are tropically linearly independent,
which, by Theorem~\ref{th:4equivmaxplus}, is a mean payoff game problem.
\end{proof}
\begin{remark}\label{rk-trivial}
Recall that checking whether a square matrix is tropically
singular can be done in $O(n^3)$ time, as observed by Butkovi\v{c}~\cite{butkovip94}. Hence, for a fixed $k$, checking whether a matrix
has tropical rank strictly less than $k$ is also a polynomial time problem
(it suffices to check whether all the ${m \choose k}\times {n\choose k}$ submatrices of size $k\times k$ are tropically singular). However, Kim and Roush
showed that the general problem of computing the tropical rank is NP-hard~\cite{kim}. 
\end{remark}

\begin{example}
Consider the  points in $\rmax^3$,
\[
a=\begin{pmatrix}
0 & 2 & 0\end{pmatrix}
\quad
b=\begin{pmatrix}
0 & 3 & 2\end{pmatrix}
\quad
c=\begin{pmatrix}
0 & 1& 1 \end{pmatrix}
\quad
d=\begin{pmatrix}
1& 3& 0 \end{pmatrix}
\quad
e=\begin{pmatrix}
1 & 1 & 0
\end{pmatrix}
\]
These points are represented in Figure~\ref{fig-hyper}.
\begin{figure}[htbp]
\[
\input{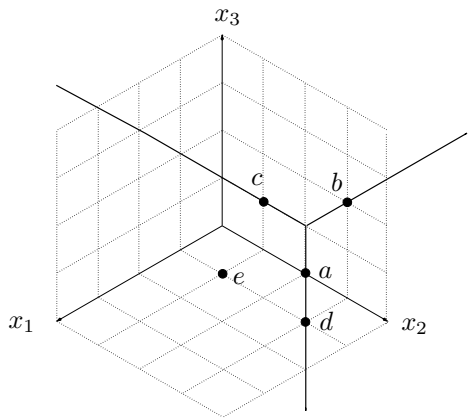}
\]
\caption{Tropical hyperplane passing through the points $a,b,c,d$}
\label{fig-hyper}
\end{figure}
The hyperplane $H$ defined by the condition that
the maximum in the expression
\[
\max(2+x_1,x_2,1+x_3)
\]
is attained at least twice is represented by the union
of three bold half-lines. %
The points $a,b,c,d$ belong to this hyperplane, but it is easy to check graphically there is no hyperplane containing the five points $a,b,c,d,e$. 

We next show that these conclusions can be obtained by the previous arguments.
The game associated to the matrix with rows $a,b,c,d$ can be visualized in
Figure~\ref{fig-gamehyper}.
\begin{figure}[htbp]
\[
\input{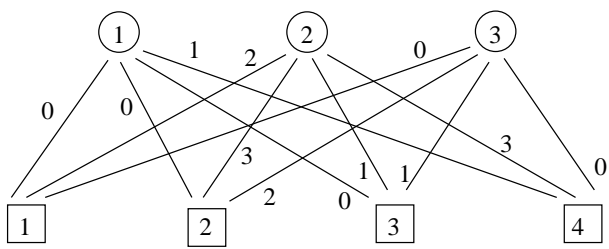}
\]
\caption{Game associated with the configurations of points in Figure~\ref{fig-hyper}}\label{fig-gamehyper}
\end{figure}
Recall that in the square states, Player Max must choose a new circle state 
different from the previously visited one.
The dynamic programming operator $f$ has the three following coordinates:
{\small \[
\begin{array}{l}
f_1(x):=\min(\max(2+x_2,x_3),\max(3+x_2,2+x_3),\max(1+x_2,1+x_3),-1+\max(3+x_2,x_3))\\
f_2(x):=\min(-2+\max(x_1,x_3),-3+\max(x_1,2+x_3),-1+\max(x_1,1+x_3),-3+\max(1+x_1,x_3))
\\
f_3(x):=\min(\max(x_1,2+x_2),-2+\max(x_1,3+x_2),-1+\max(x_1,1+x_2),\max(1+x_1,3+x_2)).
\end{array}
\]}
By applying the power algorithm
to the map $g(x)=\min(f(x),x)$, starting
from $x^0=(0,0,0)$, we compute $x^1:=g(x^0)=(0,-2,0)$, and $x^2:=g(x^1)=
(0,-2,-1)$, which is a fixed point of $g$, and so the algorithm stops.
The coefficients
of the fixed point $(0,-2,-1)$ determine the half-space $H$ containing
$a,b,c,d$. 

Let us now replace the vector $d$ by the vector $e$. The map $f$ becomes
{\small
\[
\begin{array}{l}
f_1(x):=\min(\max(2+x_2,x_3),\max(3+x_2,2+x_3),\max(1+x_2,1+x_3),-1+\max(1+x_2,x_3))\\
f_2(x):=\min(-2+\max(x_1,x_3),-3+\max(x_1,2+x_3),-1+\max(x_1,1+x_3),-1+\max(1+x_1,x_3))
\\
f_3(x):=\min(\max(x_1,2+x_2),-2+\max(x_1,3+x_2),-1+\max(x_1,1+x_2),\max(1+x_1,1+x_2)).
\end{array}
\]}
The power algorithm, with the same initial condition, gives
$x^1=(0,-2,0)$, $x^2=(-1,-2,-1)$, and since $x^2_i<0$ for all $i=1,2,3$,
 the power
algorithm stops, showing that the vectors $a,b,d,e$ are tropically
linearly independent. 
\end{example}
\subsection{Alternative proof of Theorem~\ref{th-main} via the tropical Helly theorem and further comments}\label{sec-matroid}
When $m=n$, Theorem~\ref{th-main} can be proved by direct
combinatorial means (essentially by network flows arguments)
as is done in~\cite{Izh}. 
We next observe that the $m>n$ case can
be reduced to the $m=n$ case by means
of the tropical Helly theorem, which appeared
in the works of Briec and Horvath~\cite{BriecHorvath04},
Gaubert and Sergeev~\cite{gauser}, and Gaubert and Meunier~\cite{GM08}, 
with three different proofs.

\begin{theorem}[Tropical Helly theorem, {\cite{BriecHorvath04,gauser,GM08}}]
Let $(C_i)_{i\in[\nrows]}$ denote a collection of
tropical cones of $\rmax^n$. %
If the intersection of all the $C_i$
is reduced to the zero vector, then there exists a subcollection of cardinality
$n$ the intersection of which is also reduced to the zero vector.
\end{theorem}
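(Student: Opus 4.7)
The plan is to give a game-theoretic proof in the spirit of the rest of the paper, by reducing the statement to the case where each $C_i$ is a tropical half-space and then applying Proposition~\ref{prop-trop2game}. The key observation is that Player Min's optimal positional strategy only uses at most $n$ ``rows'' (one per column state), and this bounds the number of half-spaces needed to witness the trivial intersection.

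\medskip

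\noindent\textbf{Step 1 (reduction to half-spaces).}  First, I would reduce to the case in which every $C_i$ is a tropical half-space, i.e., the solution set of a single inequality of the form $\max_{j\in J^+}(a_j+x_j)\leq \max_{j\in J^-}(b_j+x_j)$. Using the external representation of closed tropical convex cones (see~\cite{SGKatz,cgqs04}), each closed cone $C_i$ can be written as $C_i=\bigcap_{k\in K_i} H_{i,k}$ for some family of tropical half-spaces, so $\bigcap_i C_i=\bigcap_{i,k} H_{i,k}$. If we succeed in producing $n$ half-spaces $H_{i_1,k_1},\ldots,H_{i_n,k_n}$ with intersection $\{\zero\}$, then the distinct cones among $C_{i_1},\ldots,C_{i_n}$ (at most $n$ of them, padded to exactly $n$ by arbitrary $C_i$'s) have intersection contained in this half-space intersection, hence equal to $\{\zero\}$, as desired.

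\medskip

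\noindent\textbf{Step 2 (half-space case via games).} Now I would consider a (possibly infinite) family of tropical half-spaces indexed by $k\in K$, whose simultaneous solutions form the system $Ax\leq Bx$ of type~\eqref{sys-trop} with $I=K$. After the elementary normalization discussed just after Assumption~\ref{assump2}, I may assume Assumptions~\ref{assump1} and~\ref{assump2}. The hypothesis $\bigcap_k H_k=\{\zero\}$ says precisely that this system admits no solution non-identically $-\infty$. By Theorem~\ref{theo-reduce}, Player Max has no winning state in the associated game, i.e., $\bar\chi(f)<0$ where $f=A\res B$.

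\medskip

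\noindent\textbf{Step 3 (extraction of $n$ half-spaces).} Pick $\lambda$ with $\bar\chi(f)<\lambda<0$. By Proposition~\ref{prop-trop2game}, Player Min has a positional strategy $\pi:[\ncols]\to K$ such that $(h^\pi)^N(0)\leq 2\|u\|+N\lambda$, so $\chi_j(h^\pi)\leq\lambda<0$ for every $j\in[\ncols]$. The image $\pi([\ncols])$ contains at most $n$ indices of $K$. Restricting the system to the half-spaces indexed by these rows produces a new system $\tilde A x\leq \tilde B x$ with dynamic programming operator $\tilde f$; the same $\pi$ is still a positional strategy for Min in the restricted game, so $h^\pi$ is unchanged and $\bar\chi(\tilde f)\leq \chi_j(h^\pi)\leq\lambda<0$. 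Applying Theorem~\ref{theo-reduce} in the reverse direction, the restricted system admits no non-trivial solution, that is, the intersection of the at most $n$ selected half-spaces equals $\{\zero\}$. Combined with Step~1, this yields $n$ cones among the $C_i$ with trivial intersection.

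\medskip

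\noindent\textbf{Main obstacle.} The delicate point is Step~1: the external representation theorem is cleanest for closed tropical cones, while the statement is phrased for arbitrary tropical cones. For the polyhedral case (the one actually needed in Section~\ref{sec-matroid} to deduce Theorem~\ref{th-main}) this is automatic since each polyhedral cone is already a finite intersection of half-spaces, and the argument goes through without modification. For general tropical cones, one would have to either invoke closure arguments or appeal to a tropical Carathéodory theorem as in~\cite{GM08} to handle the non-closed case; this is the only place where the game-theoretic approach does not settle matters directly.
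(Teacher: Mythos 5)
First, note that the paper does not actually prove this statement: it is quoted from \cite{BriecHorvath04,gauser,GM08}, so there is no internal proof to compare yours with. The proofs in those references are combinatorial, deducing Helly from a tropical Radon-type partition (or colorful Carath\'eodory) argument: assuming every $n$ of the cones meet nontrivially, one picks witnesses $x_i\in\bigcap_{j\neq i}C_j$ and partitions them into two groups whose tropical spans meet in a nonzero point lying in all the $C_i$. That argument uses only stability of tropical cones under tropical linear combinations, so it applies to \emph{arbitrary} tropical cones. Your route---encode the intersection of half-spaces as a game, observe that a positional strategy of Min securing a negative mean payoff uses at most one row per column, and read the $n$ witnessing half-spaces off the image of that strategy---is genuinely different and has the merit of producing an explicit certificate; Steps~2 and~3 are essentially sound (note in particular that Assumption~\ref{assump1} holds automatically for the restricted system because $\pi$ is a positional strategy, so $A_{\pi(j)j}\neq-\infty$ for every $j$, and that $\tilde f\leq h^\pi$ gives $\bar\chi(\tilde f)\leq\lambda<0$ by monotonicity of iteration).

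There remain two genuine gaps. The first you identify yourself: the theorem is stated for arbitrary tropical cones, and the reduction to half-spaces in Step~1 rests on the separation theorem, hence on closedness. Your argument therefore proves only the closed (in particular polyhedral) case---enough for the application in Section~\ref{sec-matroid}, where the $C_i$ are tropical hyperplanes, but not the statement as written, whereas the Radon-based proofs need no such hypothesis. The second gap is the sentence ``after the elementary normalization \dots I may assume Assumptions~\ref{assump1} and~\ref{assump2}.'' Enforcing Assumption~\ref{assump1} adds the trivial rows $x_j\leq x_j$, which are not half-spaces of the original family; you must check that Min's strategy never selects them, which indeed it cannot, since such a row forces $\chi_j(h^\pi)=0>\lambda$. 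Enforcing Assumption~\ref{assump2} is more serious: the elimination deletes inequalities \emph{and variables}, changing the ambient dimension, and the deleted inequalities (those with right-hand side identically $-\infty$) may themselves have to appear in the witnessing subfamily in order to force the corresponding coordinates to $-\infty$. The count still closes---each elimination round consumes one inequality and kills at least one variable, so the number of rounds plus the remaining dimension is at most $n$---but this bookkeeping must be carried out; alternatively, observe that Theorem~\ref{theo-reduce} requires only Assumption~\ref{assump1} and that the ``Min'' half of the proof of Proposition~\ref{prop-trop2game} survives without Assumption~\ref{assump2}, so the elimination can be avoided altogether.
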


The following corollary shows that the rectangular case ($m>n$) in Theorem~\ref{th-main} can be derived from the square case ($m=n$).

\begin{corollary}
Let $A\in \cM_{m,n}(\Ti)$ with $m\geq n$ has tropical linearly independent
columns, then, it has an $n \times n$ submatrix the columns of which are still
tropically linearly independent.
\end{corollary}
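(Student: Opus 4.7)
The plan is to attach to each row $i \in [m]$ a tropical polyhedral cone $C_i \subseteq \rmax^n$ in such a way that the linear independence of the columns of any row-submatrix $A_I$ (with $I \subseteq [m]$) becomes the condition $\bigcap_{i\in I} C_i = \{\zero\}$, and then to apply the tropical Helly theorem to cut $[m]$ down to an $n$-element index set.

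Concretely, I would set
\[
C_i := \{y\in \rmax^n \mid (A y^\vee)_i \balance \zero\} \enspace ,
\]
where $y^\vee\in \Ti^n$ denotes the componentwise image of $y$ under the canonical injection $\rmax \hookrightarrow \Ti$. Writing $B_{ij}:=\pi A_{ij}$ and $J_i^\vee := \{j\in[n]\mid A_{ij}\in \Ti^\vee\setminus\{\zero\}\}$, a direct case analysis of the balance relation shows that $(Ay^\vee)_i\in \Ti^\circ$ if and only if
\[
B_{ij}+y_j \;\leq\; \max_{k\in[n],\, k\neq j}(B_{ik}+y_k) \qquad \text{for every } j\in J_i^\vee \enspace ,
\]
the point being that any index $j\notin J_i^\vee$ contributes either the summand $\zero$ (when $A_{ij}=\zero$) or a ghost summand (when $A_{ij}\in\Ti^\circ\setminus\{\zero\}$), so only the indices in $J_i^\vee$ can produce a real summand capable of breaking the balance. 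In particular, $C_i$ is a finite intersection of tropical half-spaces of $\rmax^n$, hence a tropical polyhedral cone, and it reduces to the usual tropical hyperplane through the $i$th row when that row lies entirely in $\Ti^\vee$.

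With this description in hand, Definition~\ref{def:trop-dependent-in-T} translates word-for-word into the statement that the columns of $A$ are tropically linearly independent if and only if $\bigcap_{i=1}^m C_i = \{\zero\}$ (the singleton formed by the identically $-\infty$ vector in $\rmax^n$), and the same equivalence applies to every row-submatrix of $A$ with $I\subseteq[m]$ in place of $[m]$. I then apply the tropical Helly theorem to the family $(C_i)_{i\in[m]}$ of tropical cones in $\rmax^n$: from $\bigcap_{i=1}^m C_i=\{\zero\}$ it produces indices $i_1,\ldots,i_n\in[m]$ with $\bigcap_{k=1}^n C_{i_k}=\{\zero\}$, which, by the translation just made, means precisely that the columns of the $n\times n$ submatrix on rows $i_1,\ldots,i_n$ are tropically linearly independent.

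The only non-routine step is the first one, namely the correct bookkeeping of ghost entries when exhibiting $C_i$ as a tropical polyhedral cone (in the purely real case $A\in \cM_{m,n}(\rmax)$ this is just the fact that each hyperplane is a finite intersection of tropical half-spaces); once this is in place, the result is an immediate application of Helly's theorem to a well-chosen family of cones.
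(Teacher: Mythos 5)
Your proof is correct and follows the same route as the paper: both attach to each row the set $C_i=\{y\in\rmax^n\mid (A\,y^\vee)_i\balance\zero\}$, check that each $C_i$ is a tropical cone, and then apply the tropical Helly theorem to extract $n$ rows. The only point where you diverge is the verification that $C_i$ is a cone --- you exhibit it explicitly as a finite intersection of tropical half-spaces via a case analysis on ghost versus real entries (which is valid and arguably more concrete), whereas the paper argues abstractly from the quasi-morphism inequality $x^\vee\vee y^\vee\leq(x\oplus y)^\vee\leq x^\vee\oplus y^\vee$.
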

\begin{proof}
We apply the tropical Helly theorem to 
\[
C_i = \set{x\in \rmax^n}{A_i \inj{x} \balance \zero}
\]
where $A_i$ denotes the $i$th row of $A$. The set $C_i$ is 
a tropical cone. 
Since the map $x\mapsto \inj{x}$ is not a morphism of semirings
this is not immediate.
But it is a multiplicative morphism, %
hence $C_i$ is stable by tropical
multiplication: $x\in C_i$ and $\lambda\in\rmax$ imply $\lambda x\in C_i$ using
$\inj{(\lambda x)}=\inj{\lambda}\inj{x}$.

To prove that $C_i$ is stable by tropical sum, we shall use Property~\eqref{inj-quasi-morph} on page~\pageref{inj-quasi-morph}
for vectors with entries in $\Ti$. Indeed,
as for $\Ti$, $\Ti^n$ can be endowed with its natural order,
$u\leq v$ if there exists $w\in\Ti^n$ such that  $u\oplus w=v$, 
which coincide with the pointwise order.
Hence, since Property~\eqref{inj-quasi-morph} holds for scalars in $\Ti$, it 
also holds for vectors with entries in $\Ti$.
Let $x,y\in C_i$, then $A_i \inj{x}$ and $A_i \inj{y}$ are in
$\Ti^\circ$. Denote by $z=x\oplus y$ the tropical sum of $x$ and $y$.
By Property~\eqref{inj-quasi-morph} for $x$ and $y$, we get that
$x^\vee\leq z^\vee\leq x^\vee\oplus y^\vee$.
Applying $A_i$, which is additive thus order preserving on $\Ti^n$,
we obtain
$A_ix^\vee\leq A_iz^\vee\leq A_i(x^\vee\oplus y^\vee)=A_i x^\vee\oplus A_i y^\vee$.
Replacing $x$ by $y$ and taking the supremum of both inequalities, we get 
\[ A_ix^\vee\vee A_i y^\vee \leq A_iz^\vee\leq A_i x^\vee\oplus A_i y^\vee\enspace .\]
Since $A_i \inj{x}$ and $A_i \inj{y}$ are in
$\Ti^\circ$, we deduce that $A_ix^\vee\vee A_i y^\vee=A_i x^\vee\oplus A_i y^\vee$, which together with the previous inequality implies that 
$A_i z^\vee=A_i x^\vee\oplus A_i y^\vee\in\Ti^\circ$.
This shows that $z\in C_i$, so that $C_i$ is a tropical cone.

By definition, the columns of the matrix $A$
are linearly independent if and only if the intersection of all the $C_i$
is reduced to the zero vector. By the tropical Helly theorem,
we can find $i_1<i_2<\cdots<i_n$ such that $C_{i_1}\cap \cdots \cap C_{i_n}=\{\zero\}$. It follows that the columns of the submatrix $F$ consisting of the rows $i_1,\ldots,i_n$ of $A$ are tropically linearly independent.
\end{proof}

\begin{remark}
The difficulty of computing the tropical
rank is related to the lack of matroid structure, see~\cite[\S~7]{DSS}.
\end{remark}

\paragraph{Acknowledgements}
The first two authors thank Melody Chan for having pointed out to us that the special case
of Theorem~\ref{theorem2} in which the entries of the matrix are finite can
be deduced from Theorem~5.5 of~\cite{DSS}. The second author thanks
Jack Edmonds for having brought reference~\cite{skutella} to his attention.

The paper was written when the third author was visiting the Maxplus team at INRIA, Paris - Rocquencourt, and CMAP (\'Ecole Polytechnique and INRIA, Saclay - \^Ile-de-France).  He would like to thank the colleagues from both institutions for their warm hospitality.
\bibliographystyle{alpha}
\bibliography{tropical}
\end{document}